\newtheorem{thm}{Theorem}[subsection]
\newtheorem{prop}[thm]{Proposition}
\newtheorem{dfn}[thm]{Definition}
\newtheorem{rem}[thm]{Remark}
\newtheorem{conj}[thm]{Conjecture}
\numberwithin{equation}{section}
\newcommand{\npmod}[1]{\!\!\pmod{#1}}
\newcommand{\nnpmod}[1]{\!\!\!\!\pmod{#1}}
\newenvironment{proof}{\par\noindent{\bf[Proof]}}%
                      {$\blacksquare$\noindent\par\vspace{0.5\baselineskip}}
                      {$\blacksquare$\par\noindent}
\font\b=cmr10 scaled \magstep4
\def\bigzerou{\smash{\lower0.7ex\hbox{\b 0}}}
\def\bigastl{\smash{\lower0.7ex\hbox{\b *}}}
\def\bigastu{\smash{\lower2.7ex\hbox{\b *}}}
\def\addots{\mathinner
    {\mkern1mu\raise1pt\hbox{.}\mkern2mu
    \raise4pt\hbox{.}\mkern2mu\raise7pt\vbox{\kern7pt\hbox{.}}\mkern1mu}}
\newcommand{\numsubsection}{\@startsection%
  {subsection}%
  {2}%
  {0mm}%
  {\baselineskip}%
  {-0.2\parindent}%
{\normalfont\large\upshape\bfseries}}%
\def\@dotsep{1.5}
\def\@pnumwidth{1em}
\title{On certain supercuspidal representations of $SL_n(F)$ 
  associated with\\ tamely ramified extensions :\\
  the formal degree conjecture and\\
  the root number conjecture} 
\author{Koichi Takase
        \thanks{The author is partially supported by 
                    JSPS KAKENHI Grant Number JP 16K05053}}
\date{}
\begin{document}   


%
%

\maketitle


\section{Introduction}
\label{sec:introduction}

\numsubsection{}
\label{subsec:conjectures-for-split-semi-simple-group}
Let $F/\Bbb Q_p$ be a finite extension with $p\neq 2$ whose 
integer ring $O_F$ has unique maximal ideal $\frak{p}_F$ wich
is generated by $\varpi_F$. The residue class field 
$\Bbb F=O_F/\frak{p}_F$ is a finite field of $q$-elements. The Weil
  group of $F$ is denoted by $W_F$ which is a subgroup of the absolute
  Galois group $\text{\rm Gal}(\overline F/F)$ where 
$\overline F$ is a fixed algebraic closure of $F$ in which we will
  take the algebraic extensions of $F$.

Let $G$ be a connected semi-simple linear algebraic group defined over
$F$. For the sake of simplicity, we will assume that $G$ splits over
$F$. Then the $L$-group $^LG$ of $G$ is equal to the dual group 
$G\sphat$ of $G$. 
An admissible representation
$$
 \varphi:W_F\times SL_2(\Bbb C)\to\,^LG
$$
of the Weil-Deligne group of $F$ is called a discrete parameter of $G$
over $F$ if the 
centralizer $\mathcal{A}_{\varphi}=Z_{\,^LG}(\text{\rm Im}\varphi)$ of
the image of $\varphi$ in  $^LG$ is a finite
group. Let us denote by $\mathcal{D}_F(G)$ the $G\sphat$-conjugacy classes
of the discrete parameters of $G$ over $F$. The conjectural
parametrization of $\text{\rm Irr}_2(G)$ (resp. $\text{\rm Irr}_s(G)$),
the set of the equivalence 
classes of the irreducible admissible square-integrable (resp. 
supercuspidal) representations of $G$, by $\mathcal{D}_F(G)$ is 
(see \cite[p.483, Conj.7.1]{Gross-Reeder2010} for the details)

\begin{conj}
\label{conjecture:lamglamds-parameter-of-square-integrable-rep}
For every $\varphi\in\mathcal{D}_F(G)$, there exists a finite subset 
$\Pi_{\varphi}$ of $\text{\rm Irr}_2(G)$ such that
\begin{enumerate}
\item $\text{\rm Irr}_2(G)
       =\bigsqcup_{\varphi\in\mathcal{D}_F(G)}\Pi_{\varphi}$,
\item there exists a bijection of $\Pi_{\varphi}$ onto the equivalence
  classes $\mathcal{A}_{\varphi}\sphat$ 
  of the irreducible complex linear representations of 
  $\mathcal{A}_{\varphi}$,
\item $\Pi_{\varphi}\subset\text{\rm Irr}_s(G)$ if 
      $\varphi|_{SL_2(\Bbb C)}=1$.
\end{enumerate}
The finite set $\Pi_{\varphi}$ is called a $L$-packet of $\varphi$.
\end{conj}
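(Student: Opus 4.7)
The plan is to establish the conjecture for $G=SL_n$, the setting of this paper, by reducing to the local Langlands correspondence for $GL_n(F)$ of Harris--Taylor and Henniart. The essential tool is the short exact sequence $1\to SL_n(F)\to GL_n(F)\to F^\times\to 1$, dual to $1\to\Bbb C^\times\to GL_n(\Bbb C)\to PGL_n(\Bbb C)\to 1$ on the side of dual groups, together with the identification $^LSL_n=PGL_n(\Bbb C)$.

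First, given a discrete parameter $\varphi:W_F\times SL_2(\Bbb C)\to PGL_n(\Bbb C)$, I would construct a lift $\widetilde\varphi:W_F\times SL_2(\Bbb C)\to GL_n(\Bbb C)$. Existence of such a lift follows from Tate's theorem on the vanishing of $H^2(W_F,\Bbb C^\times)$ (together with the fact that $SL_2(\Bbb C)$ is simply connected), and any two lifts differ by a character of $W_F$. Under the Langlands correspondence for $GL_n$, $\widetilde\varphi$ corresponds to a square-integrable representation $\pi(\widetilde\varphi)$ of $GL_n(F)$, which is supercuspidal exactly when $\widetilde\varphi|_{SL_2(\Bbb C)}=1$ (yielding assertion (3)). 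I would then define the $L$-packet $\Pi_\varphi$ to be the set of irreducible constituents of $\pi(\widetilde\varphi)|_{SL_n(F)}$. This set is independent of the lift, since twisting $\widetilde\varphi$ by a character of $W_F$ only changes $\pi(\widetilde\varphi)$ by a character $\chi\circ\det$, which is trivial on $SL_n(F)$.

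For assertion (2), Clifford theory parameterizes the constituents of $\pi(\widetilde\varphi)|_{SL_n(F)}$ by the characters of the finite abelian stabilizer
\begin{equation*}
X(\widetilde\varphi)=\{\chi:F^\times\to\Bbb C^\times\mid\pi(\widetilde\varphi)\otimes(\chi\circ\det)\cong\pi(\widetilde\varphi)\}.
\end{equation*}
Via the Langlands correspondence, $X(\widetilde\varphi)$ translates into the finite group of $W_F$-characters $\chi$ with $\widetilde\varphi\otimes\chi\cong\widetilde\varphi$, which by Schur's lemma is precisely $\mathcal{A}_\varphi=Z_{PGL_n(\Bbb C)}(\text{Im}\,\varphi)$. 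Taking Pontryagin duals produces the required bijection $\Pi_\varphi\leftrightarrow\mathcal{A}_\varphi\sphat$.

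The main obstacle is the global matching asserted in (1): that $\text{Irr}_2(SL_n(F))$ is exhausted by these packets and that packets attached to distinct $PGL_n(\Bbb C)$-conjugacy classes of parameters are pairwise disjoint. Exhaustion reduces, again by Clifford theory, to surjectivity of the $GL_n$ correspondence onto square-integrable representations, while disjointness follows from injectivity of the $GL_n$ correspondence modulo character twists. For the explicit tamely ramified supercuspidal representations of $SL_n(F)$ treated in this paper, the above framework becomes concrete enough that one can compute the formal degrees and root numbers on the group side directly and compare them with the predictions extracted from the parameter $\varphi$ on the Galois side; the technical core of the paper should lie in making this explicit comparison, which is where the real difficulty of the conjectural formal degree and root number identities resides.
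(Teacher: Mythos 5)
This item is Conjecture \ref{conjecture:lamglamds-parameter-of-square-integrable-rep}: it is quoted from Gross--Reeder as background, stated for an arbitrary connected split semi-simple group $G$, and the paper gives no proof of it (none is available in general; the paper only constructs candidate parameters for certain supercuspidal representations of $SL_n(F)$ and verifies the formal degree and root number conjectures for those). So the first problem with your proposal is one of scope: you prove (or rather sketch) a statement about $G=SL_n$ only, while the conjecture quantifies over all split semi-simple groups, and even in the $SL_n$ case your argument is not self-contained but a reduction to the local Langlands correspondence for $GL_n$ of Harris--Taylor and Henniart together with the restriction theory for $GL_n\supset SL_n$ (Gelbart--Knapp, Hiraga--Saito). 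Invoking those results is legitimate if one's goal is to recall that the conjecture is a theorem for $SL_n$, but it is not a proof of the statement as it appears here, and it is not what the paper does: the paper never claims the conjecture, and its actual content (Sections \ref{sec:kaleta-l-parameter}--\ref{sec:root-number-conjecture}) is the explicit construction of $\varphi$ and the numerical verifications, independent of whether the packets of Conjecture \ref{conjecture:lamglamds-parameter-of-square-integrable-rep} exist in general.

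Within the $SL_n$ sketch itself there are also real gaps. Clifford theory for the restriction from $GL_n(F)$ to $SL_n(F)$ does not by itself give a bijection of the constituents of $\pi(\widetilde\varphi)|_{SL_n(F)}$ with the character group of $X(\widetilde\varphi)$: one must know that the restriction is multiplicity free and that the relevant intertwining algebra is the group algebra of $X(\widetilde\varphi)$ with trivial $2$-cocycle; for $SL_n$ this is a theorem (Hiraga--Saito), not a formality, and a canonical bijection with $\mathcal{A}_{\varphi}\sphat$ (rather than a mere bijection of sets of the same cardinality) moreover requires a choice such as a Whittaker datum. Likewise, the identification of $X(\widetilde\varphi)$ with $\mathcal{A}_{\varphi}=Z_{PGL_n(\Bbb C)}(\text{\rm Im}\,\varphi)$ needs an argument of the type the paper carries out for its specific parameter in Proposition \ref{prop:centralizer-of-image-of-varphi-in-pgl(v)-general-case}, and exhaustion and disjointness in assertion 1) require that every square-integrable representation of $SL_n(F)$ occur in the restriction of an essentially square-integrable representation of $GL_n(F)$ and that two such restrictions meet only when the $GL_n$-representations differ by a character twist; these facts are true but must be proved, and your proposal only asserts them. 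In short: the approach you outline is the standard route by which the conjecture is known for $SL_n$, but it neither matches the paper (which proves nothing here) nor, as written, constitutes a complete proof even in that special case.
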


According to this conjecture, any $\pi\in\text{\rm Irr}_2(G)$ is
determined by $\varphi\in\mathcal{D}_F(G)$ and 
$\sigma\in\mathcal{A}_{\varphi}\sphat$. So the formal degree of $\pi$
should be determined by these data. The formal degree conjecture due
to Hiraga-Ichino-Ikeda \cite{HiragaIchinoIkeda2008} is 
(with the formulation of \cite{Gross-Reeder2010}) 

\begin{conj}\label{conjecture:formal-degree-conjecture}
The formal degree $d_{\pi}$
of $\pi$ with respect to the absolute value of the Euler-Poincar\'e
measure (see {\rm \cite[$\S 3$]{Serre1971}} for the details) 
on $G(F)$ is equal to
$$
 \frac{\dim\sigma}
      {|\mathcal{A}_{\varphi}|}\cdot
 \left|
 \frac{\gamma(\varphi,\text{\rm Ad},\psi,d(x),0)}
      {\gamma(\varphi_0,\text{\rm Ad},\psi,d(x),0)}\right|.
$$
\end{conj}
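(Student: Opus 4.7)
The plan is to prove the formal degree conjecture in the specific setting targeted by the paper, namely for the supercuspidals of $SL_n(F)$ built from a tamely ramified extension $E/F$ of degree $n$ together with an admissible character $\theta$ of $E^\times$, rather than in the full generality stated. First I would realize each such $\pi$ as a compactly induced representation: the Howe construction applied to $(E/F,\theta)$ furnishes a $GL_n(F)$-supercuspidal $\widetilde\pi=\text{c-Ind}_{\widetilde K}^{GL_n(F)}\widetilde\rho$, and $\widetilde\pi|_{SL_n(F)}$ decomposes multiplicity-freely into constituents $\pi=\text{c-Ind}_K^{SL_n(F)}\rho$ with $K$ open and compact modulo the finite centre. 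With the Euler--Poincar\'e measure of \cite{Serre1971}, the standard formula for compactly induced representations gives
$$
d_\pi=\frac{\dim\rho}{\mu_{\mathrm{EP}}(K)},
$$
and each factor can be expressed explicitly in $q$, the ramification index $e(E/F)$, and the residue degree $f(E/F)$.

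Next I would identify the discrete L-parameter $\varphi:W_F\to{}^{L}SL_n=PGL_n(\Bbb C)$. Via the local Langlands correspondence for $GL_n$, the parameter of $\widetilde\pi$ is $\widetilde\varphi=\mathrm{Ind}_{W_E}^{W_F}\widetilde\theta$ with $\widetilde\theta$ matching $\theta$ by local class field theory; composing with $GL_n(\Bbb C)\twoheadrightarrow PGL_n(\Bbb C)$ yields $\varphi$, whose discreteness is forced by the admissibility of $(E/F,\theta)$. The centralizer $\mathcal{A}_\varphi$ becomes the finite abelian group $X_\theta$ of characters $\chi$ of $F^\times$ with $\widetilde\pi\otimes\chi\simeq\widetilde\pi$, and Clifford theory exhibits exactly $|\mathcal{A}_\varphi|$ inequivalent $SL_n(F)$-constituents of $\widetilde\pi|_{SL_n(F)}$, paired with the characters $\sigma\in\mathcal{A}_\varphi\sphat$ to form the L-packet $\Pi_\varphi$.

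The analytic heart of the argument is the computation of $\gamma(\varphi,\mathrm{Ad},\psi,dx,0)$. Applying Mackey to $(\mathrm{Ind}_{W_E}^{W_F}\widetilde\theta)\otimes(\mathrm{Ind}_{W_E}^{W_F}\widetilde\theta)^{\vee}$ and subtracting the trivial line that separates $\mathfrak{gl}_n$ from $\mathfrak{sl}_n$ decomposes $\mathrm{Ad}\circ\varphi$ into a direct sum of induced characters of the shape $\mathrm{Ind}_{W_{E_w}}^{W_F}\bigl(\widetilde\theta\cdot{}^{w}\widetilde\theta^{-1}\bigr)$ indexed by non-trivial double cosets $w\in W_E\backslash W_F/W_E$, together with the non-trivial characters of $\mathrm{Gal}(E/F)$ arising from the $w=1$ piece. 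The gamma factor then becomes a product of abelian local constants accessible through Tate's thesis, and dividing by $\gamma(\varphi_0,\mathrm{Ad},\psi,dx,0)$ leaves a concrete product of powers of $q$, conductor exponents, and unit-modulus Gauss sums.

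The main obstacle will be the precise matching of volumes with conductors. Since the restriction is multiplicity-free and each $\sigma\in\mathcal{A}_\varphi\sphat$ is one-dimensional in the tame case, we have $d_\pi=d_{\widetilde\pi}/|\mathcal{A}_\varphi|$, so (after accounting for the normalization between $\mathrm{Ad}_{GL_n}$ and $\mathrm{Ad}_{SL_n}$) the conjectured identity reduces to a matching between $d_{\widetilde\pi}$ and the modulus of a ratio of adjoint gamma factors for $\widetilde\varphi$. Establishing this requires showing that $\mu_{\mathrm{EP}}(\widetilde K)$, read off from the building-theoretic data of the tame construction, equals the $q$-power coming from the sum of conductor exponents of $\widetilde\theta\cdot{}^{w}\widetilde\theta^{-1}$. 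The bookkeeping must reconcile (i) the filtration of $O_E^\times$ defining the conductor of $\widetilde\theta$, (ii) the Herbrand transition between upper- and lower-numbering ramification in $\mathrm{Gal}(\overline F/F)$, and (iii) the normalizations of the Euler--Poincar\'e measure, $\psi$, and $dx$; I expect that once these are reconciled the two sides collapse to the same explicit expression in $q$, $e$, and $f$.
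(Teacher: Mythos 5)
Your plan is a verification of the Hiraga--Ichino--Ikeda conjecture in the setting of the paper, which is also all the paper itself does (Theorem \ref{th:formal-degree-conjecture-for-sl(n)}); but your route is genuinely different from the paper's. You pass through $GL_n(F)$: Howe's construction, the local Langlands correspondence for $GL_n$, and restriction of $\widetilde\pi$ to $SL_n(F)$, in the spirit of Hiraga--Ichino--Ikeda's own treatment of $SL_n$. The paper never leaves $SL_n$: the formal degree is read off directly as $\dim\delta_{\beta,\theta}$ from the compact induction off the hyperspecial subgroup (Proposition \ref{prop:dimension-of-delta-beta-theta}, Theorem \ref{th:supercuspidal-representation-of-sl(n)}), the parameter is built by Kaletha's method (local Langlands for the elliptic torus $U_{K/F}$ plus Langlands--Shelstad $\chi$-data), and $\gamma(\varphi,\mathrm{Ad},\psi,d(x),0)$, $\gamma(\varphi_0,\mathrm{Ad},\psi,d(x),0)$ and $|\mathcal{A}_{\varphi}|$ are computed explicitly in Section \ref{sec:formal-degree-conjecture}. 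Your Mackey decomposition of $\mathrm{Ad}\circ\varphi$ into induced abelian characters is essentially Proposition \ref{structure-of-adjoint-representation}, and your conductor computation via ramification filtrations parallels Subsection \ref{subsec:gamma-factor-of-adjoint-representation}, so the analytic half of your plan is sound and close to the paper's.

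The genuine gap is the step ``$d_\pi=d_{\widetilde\pi}/|\mathcal{A}_{\varphi}|$''. Multiplicity-freeness of $\widetilde\pi|_{SL_n(F)}$ does not give this. Clifford theory for the open normal subgroup $F^{\times}SL_n(F)\subset GL_n(F)$, of index $[F^{\times}:(F^{\times})^n]=n\,|\mu_n(F)|$ (recall $p\nmid n$), yields with compatible Haar measures a relation of the shape $d_\pi=\frac{[F^{\times}:(F^{\times})^n]}{\#\{\text{constituents}\}}\cdot d_{\widetilde\pi}$, and on top of that one must convert between a measure on $GL_n(F)/Z(F)$, one on $F^{\times}SL_n(F)/F^{\times}\cong SL_n(F)/\mu_n(F)$, and the Euler--Poincar\'e measure on $SL_n(F)$ (no Euler--Poincar\'e measure is available on $GL_n(F)$ itself). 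Since $|\mathcal{A}_{\varphi}|=(O_F^{\times}:N_{K/F}(O_K^{\times}))\cdot f$ (Proposition \ref{prop:a-theta-is-character-group-of-gal-k-over-f}) is in general a proper divisor of $n|\mu_n(F)|$, your identity is false as literally stated, and the indices and measure conversions it omits are exactly what must cancel against the conductor computation; this cancellation is the crux of the verification and your proposal defers it to unspecified ``bookkeeping''. Three further points on your route need attention: (i) the $GL_n$ parameter of the Howe supercuspidal is $\mathrm{Ind}_{W_K}^{W_F}(\widetilde\theta\cdot\mu)$ with a tamely ramified rectifying character $\mu$ --- the role played in the paper by the $\chi$-data character $c$, i.e.\ $\vartheta=c\,\theta$; this is harmless for the formal degree (the characters $\widetilde\vartheta_{\gamma}$ have conductor exponent at least $e(r-1)$), but it cannot be dropped without comment; (ii) separating $\mathrm{Ad}_{\mathfrak{gl}_n}$ from $\mathrm{Ad}_{\mathfrak{sl}_n}$ is not a mere ``normalization'': the trivial summand has $\gamma(\mathbf 1,0)=0$, so one must compute the $\mathfrak{sl}_n$-adjoint factor itself rather than divide $GL_n$-factors; (iii) realizing each constituent as $\mathrm{c\text{-}Ind}_{K}^{SL_n(F)}\rho$ and proving that the number of constituents equals $|\mathcal{A}_{\varphi}|$ are substantive steps, for which the paper does the corresponding work directly on the $SL_n$ side (Propositions \ref{prop:minimal-k-type-of-induced-rep}, \ref{prop:centralizer-of-image-of-varphi-in-pgl(v)-general-case}, \ref{prop:a-theta-is-character-group-of-gal-k-over-f}).
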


Here 
$$
 \gamma(\varphi,\text{\rm Ad},\psi,d(x),s)
 =\varepsilon(\varphi,\text{\rm Ad},d(x),s)\cdot
  \frac{L(\varphi^{\vee},\text{\rm Ad},1-s)}
       {L(\varphi,\text{\rm Ad},s)}
$$
is the gamma-factor associated with the $\varphi$ combined with the
adjoint representation $\text{\rm Ad}$ of $G\sphat$ on 
its Lie algebra $\frak{g}\sphat$, and a continuous
additive character $\psi$ of $F$ such that 
$\{x\in F\mid\psi(xO_F)=1\}=O_F$ 
and the Haar measure $d(x)$ on the additive group $F$ such that 
$\int_{O_F}d(x)=1$. See \cite[pp.440-441]{Gross-Reeder2010} for the details.
$$
 \varphi_0:W_F\times SL_2(\Bbb C)
           \xrightarrow{\text{\rm proj.}}SL_2(\Bbb C)
           \to G\sphat
$$
is the principal parameter (see \cite[p.447]{Gross-Reeder2010} for the
definition). 

The formal degree conjecture concerns with the absolute value
of the epsilon-factor
$$
 \varepsilon(\varphi,\text{\rm Ad},d(x),s)
 =w(\varphi,\text{\rm Ad})\cdot q^{a(\varphi,\text{\rm Ad})(\frac 12-s)}
$$
where $a(\varphi,\text{\rm Ad})$ is the Artin-conductor and 
$w(\varphi,\text{\rm Ad})$ is the root number. 

In order to state the root number conjecture, we need some
notations. Let $T\subset G$ be a maximal torus split over $F$ with
respect to which the root datum 
$$
 (X(T),\Phi(T),X^{\vee}(T),\Phi^{\vee}(T))
$$
is defined. Then the dual group $G\sphat$ is, by the definition, the
connected reductive complex algebraic group with a maximal torus
$T\sphat$ with which its root datum is isomorphic to
$$
 (X^{\vee}(T),\Phi^{\vee}(T),X(T),\Phi(T)).
$$
Put $2\cdot\rho=\sum_{0<\alpha\in\Phi^{\vee}(T)}\alpha$, then 
$\epsilon=2\cdot\rho(-1)\in T$ is a central element of $G$. Now the
root number conjecture says that 

\begin{conj}{\rm \cite[p.493, Conj.8.3]{Gross-Reeder2010}}
\label{conjecture:root-number-conjecture}
$$
 \frac{w(\varphi,\text{\rm Ad})}
      {w(\varphi_0,\text{\rm Ad})}=\pi(\epsilon)
$$ 
where $\epsilon$ is the central element of $G$ defined above 
(see {\rm \cite[p.492, (65)]{Gross-Reeder2010}} for the details).
\end{conj}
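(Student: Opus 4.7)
The plan is to verify the identity $w(\varphi,\text{\rm Ad})/w(\varphi_0,\text{\rm Ad})=\pi(\epsilon)$ by direct computation of both sides, specialized to the class of parameters and representations indicated in the title. Fix a tamely ramified extension $E/F$ of degree $n$ and an admissible regular character $\theta$ of $E^{\times}$, from which $\pi\in\text{\rm Irr}_s(SL_n(F))$ is built in the usual way. Let $\tilde\theta$ be the character of $W_E$ corresponding to $\theta$ via local class field theory. The Langlands parameter of $\pi$ is the image in $PGL_n(\Bbb C)$ of the induced representation $\rho=\text{\rm Ind}_{W_E}^{W_F}\tilde\theta$, and since $\text{\rm Ad}$ on $\frak{pgl}_n$ is carried by the trace-zero part of $\text{\rm End}$, one has
$$
 \text{\rm Ad}\circ\varphi\oplus\mathbf{1}=\rho\otimes\rho^{\vee}
$$
as virtual representations of the Weil--Deligne group of $F$.

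Applying Mackey's formula to $\rho\otimes\rho^{\vee}$ decomposes it as a direct sum of characters induced from $W_E$ (and, when $E/F$ is non-Galois, from intermediate subfields). The inductivity in degree zero of the Langlands--Deligne local constants then reduces $w(\varphi,\text{\rm Ad})$ to a product of Langlands $\lambda(E/F,\psi)$-factors and classical Gauss sums attached to the tame characters $\tilde\theta\cdot{}^{\gamma}\tilde\theta^{-1}$, whose residual components can be read off directly from $\theta$. For the principal parameter $\varphi_0$, the restriction to $W_F$ is trivial, and the principal $SL_2$-action on $\frak{pgl}_n$ yields
$$
 \text{\rm Ad}\circ\varphi_0=\text{\rm Sp}(3)\oplus\text{\rm Sp}(5)\oplus\cdots\oplus\text{\rm Sp}(2n-1),
$$
a sum of special Weil--Deligne representations with trivial inertia action, whose root numbers are an elementary product of Tate local constants.

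Direct computation in the root datum gives $2\rho=\sum_{i=1}^n(n+1-2i)\,e_i$ and hence $\epsilon=(-1)^{n-1}I_n\in SL_n$, so $\pi(\epsilon)=1$ when $n$ is odd while $\pi(\epsilon)=\omega_{\pi}(-1)$ when $n$ is even; the tame construction for $\pi$ expresses the latter scalar explicitly in terms of $\theta(-1)$ together with a sign attached to the residual data of $E/F$. Combining the three computations, the desired equality collapses to an identity among normalized Gauss sums $W(\tilde\theta\cdot{}^{\gamma}\tilde\theta^{-1})$ and the Langlands $\lambda(E/F,\psi)$-factor. The main obstacle will be this final Gauss-sum identity: it requires Stickelberger-type relations on $\Bbb F^{\times}$ combined with precise tracking of the $\lambda$-factor arising from inductivity and of the intrinsic central-character normalization inherent in the tame supercuspidal construction, but once these bookkeeping issues are isolated the equality should follow from the classical relation $g(\chi)g(\chi^{-1})=\chi(-1)q$.
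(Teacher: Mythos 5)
Your outline follows the same route as the paper (decompose $\mathrm{Ad}\circ\varphi\oplus\mathbf{1}=\rho\otimes\rho^{\vee}$ into monomial constituents, use inductivity of local constants and Gauss sums, compute $\epsilon=2\rho(-1)=(-1)^{n-1}1_n$ and the central value of $\pi$), but two essential points are missing, and the first one is where the conjecture actually lives. The parameter attached to $\pi_{\beta,\theta}$ is \emph{not} the projectivization of $\mathrm{Ind}_{W_K}^{W_F}\widetilde\theta$ for a bare extension $\widetilde\theta$ of $\theta$: the Langlands--Shelstad construction with $\chi$-data forces the induction of $\widetilde\vartheta=\widetilde c\cdot\widetilde\theta$, where $c$ is the character of $U_{K/F}$ produced by the symmetric roots (the quadratic characters of the fields $K_{\gamma}$). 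The final computation gives $w(\varphi,\mathrm{Ad})=\vartheta((-1)^{n-1})$ times an explicit sign ($(-1)^{\frac{q-1}{2}f}$ when $e$ is even, $1$ when $e$ is odd), and the theorem holds precisely because $c((-1)^{n-1})$ equals that same sign, so that $w(\varphi,\mathrm{Ad})=\theta((-1)^{n-1})=\pi_{\beta,\theta}(\epsilon)$; note also that $\pi_{\beta,\theta}(\epsilon)$ is exactly $\theta((-1)^{n-1})$, with no further "sign attached to the residual data" on that side. If you compute with the untwisted induced parameter, the uncancelled sign survives and the identity you are trying to verify fails in the ramified-even case, so the twist is not bookkeeping but the content of the statement.

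The second gap is in your proposed Gauss-sum endgame. The relation $g(\chi)g(\chi^{-1})=\chi(-1)q$ disposes only of the constituents $\mathrm{Ind}_{K^{\times}}^{W_{K/F}}\widetilde\vartheta_{\gamma}$ coming in pairs $\{\gamma,\gamma^{-1}\}$ with $\gamma^{2}\neq1$; these contribute mere powers of $q$. The whole root number comes from the self-paired constituents with $\gamma^{2}=1$, where $\widetilde\vartheta_{\gamma}(x)=\widetilde\vartheta(x^{1-\gamma})$ is trivial on $K_{\gamma}^{\times}$, and for those one needs the Fr\"ohlich--Queyrut theorem, which evaluates the normalized $\varepsilon$-factor of such a character at an element $\xi_{\gamma}$ with $K=K_{\gamma}(\xi_{\gamma})$, $\xi_{\gamma}^{2}\in K_{\gamma}$, yielding $\widetilde\vartheta(-1)$ per symmetric orbit; no Stickelberger-type relation will produce this. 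One also needs the classification of the order-two elements of $\mathrm{Gal}(K/F)$ (how many there are, which lie in $\mathrm{Gal}(K/K_{0})$, hence which $K/K_{\gamma}$ are ramified) to pin down both the number of such orbits mod $2$ and the unramified/ramified dichotomy entering the $\lambda$-factor $\lambda(K/F,\psi)^{n}$ and the sign that must match $c((-1)^{n-1})$. Until these two points are supplied, the proposal is a plausible plan rather than a proof.
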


Since $G$ is assumed to be splits over $F$, we have 
$w(\varphi_0,\text{\rm Ad})=1$ (see \cite[p.448]{Gross-Reeder2010}).

\numsubsection{}
In this paper, we will construct quite explicitly supercuspidal
representations of $G(F)=SL_n(F)$ associated with a tamely ramified
extension $K/F$ of degree $n$ (Theorem 
\ref{th:supercuspidal-representation-of-sl(n)}). When $K/F$ is normal, 
we will also give  candidates of Langlands parameters of 
the supercuspidal representations (the section 
\ref{sec:kaleta-l-parameter}), and will verify the validity of the
formal degree conjecture (Theorem
\ref{th:formal-degree-conjecture-for-sl(n)}) 
and the root number conjecture (Theorem 
\ref{th:root-number-conjecture}) with them.

Our supercuspidal representations, denoted by $\pi_{\beta,\theta}$,
are given by the compact induction 
$\text{\rm ind}_{G(O_F)}^{G(F)}\delta_{\beta,\theta}$ from irreducible
unitary representations $\delta_{\beta,\theta}$ of the hyperspecial
compact subgroup $G(O_F)=SL_n(O_F)$. Here $\pi_{\beta,\theta}$ and
$\delta_{\beta,\theta}$ are characterized each other by the conditions
\begin{enumerate}
\item $\delta_{\beta,\theta}$ factors through the canonical morphism 
      $G(O_F)\to G(O_F/\frak{p}_F^r)$ with $r\geq 2$, and 
      the multiplicity of $\delta_{\beta,\theta}$ in 
      $\pi_{\beta,\theta}|_{G(O_F)}$ is one, 
\item any irreducible unitary representation $\delta$ of $G(O_F)$
      which factors through the canonical morphism 
      $G(O_F)\to G(O_F/\frak{p}_F^r)$, and a constituent of 
      $\pi_{\beta,\theta}|_{G(O_F)}$, then
      $\delta=\delta_{\beta,\theta}$. 
\end{enumerate}
The parameters $\beta$ and $\theta$ are associated with the tamely
ramified extension $K/F$, that is, $O_K=O_F[\beta]$ and 
$\theta$ is a certain continuous unitary character of 
$$
 U_{K/F}=\{x\in K^{\times}\mid N_{K/F}(x)=\}
$$
(see the subsection 
\ref{subsec:symplectic-space-associated-with-tamely-ramified-ext} for
the precise definitions). 
We have the irreducible representation $\delta_{\beta,\theta}$ by the
general theory given by \cite{Takase2021}. 

The candidate of Langlands parameter is given by the method of Kaletha 
\cite{Kaletha2019}. Regard the compact group $U_{K/F}$ as the
group of $F$-rational points of an elliptic torus of $SL_n$. Then, by
the local Langlands correspondence of tori 
(see \cite{Yu2009}) and the Langlands-Schelstad procedure 
(\cite{LanglandsShelstad1987}) gives a group homomorphism $\varphi$ of
the Weil group $W_F$ of $F$ to the dual group 
$G\sphat=PGL_n(\Bbb C)$ of $SL_n$ over $F$. 

\numsubsection{}
The section \ref{sec:supercuspidal-representation-of-sl(n)} is devoted
to the construction of the supercuspidal representation 
$\pi_{\beta,\theta}$ of $SL_n(F)$. After recalling, in the subsection 
\ref{subsec:regular-irred-character-of-hyperspecial-compact-subgroup}, 
the general theory
of the regular irreducible representations of the finite group 
$G(O_F/\frak{p}_F^r)$ ($r\geq 2$) given by 
\cite{Takase2021}, we will define the irreducible unitary
representation $\delta_{\beta,\theta}$ of $SL_n(O_F)$ in the
subsection
\ref{subsec:symplectic-space-associated-with-tamely-ramified-ext}. The
construction of the supercuspidal representation $\pi_{\beta,\theta}$
is given in the subsection
\ref{subsec:construction-of-supercuspidal-representataion}. 

The candidate of Langlands parameter is given in the section 
\ref{sec:kaleta-l-parameter}. The local Langlands correspondence of
elliptic torus (Proposition
\ref{prop:local-langlands-correspondence-of-elliptic-tori}), the
Langlands-Schelstad procedure (the subsection 
\ref{subsec:chi-datum}) are given quite explicitly. In particular, the
candidate of Langlands parameter is given by
$$
 \varphi:W_F\to W_{K/F}
            \xrightarrow{(\ast)} GL_n(\Bbb C)\to PGL_n(\Bbb C)
$$
where 
$(\ast)=\text{\rm Ind}_{K^{\times}}^{W_{K/F}}\widetilde\vartheta$ is
the induced representation from a character $\widetilde\vartheta$ of
$K^{\times}$ to the relative Weil group 
$W_{K/F}=W_F/\overline{[W_K,W_K]}$. 

Using the explicit description of the parameter $\varphi$, we will
verify the formal degree conjecture in the section 
\ref{sec:formal-degree-conjecture}, and the root number conjecture in
the section \ref{sec:root-number-conjecture}. 

Several basic facts on the local factor associated with
representations of the Weil group are given in the appendix 
\ref{sec:local-factors}.

\section{Supercuspidal representations of $SL_n(F)$}
\label{sec:supercuspidal-representation-of-sl(n)}

\subsection{Regular irreducible characters of hyperspecial compact
            subgroup}
\label{subsec:regular-irred-character-of-hyperspecial-compact-subgroup}
Let us recall the main results of \cite{Takase2021}.

Fix a continuous unitary additive character $\psi:F\to\Bbb C^1$ such
that
$$
 \{x\in F\mid\psi(xO_F)=1\}=O_F.
$$
Let $G=SL_n$ be the $O_F$-group scheme such that, for any
$O_F$-algebra 
\footnote{In this paper, an $O_F$-algebra means an unital commutative
  $O_F$-algebra.} 
$R$, the group of the $A$-valued point is $G(A)=SL_n(A)$. 
Let $\frak{g}$ the Lie algebra scheme of $G$ which is a closed affine
$O_F$-subscheme of $\frak{gl}_n$ the Lie algebra scheme of $GL_n$
defined by
$$
 \frak{g}(R)=\{X\in\frak{gl}_n(R)\mid \text{\rm tr}(X)=0\}
$$
for all $O_F$-algebra $R$. 
Let 
$$
 B:\frak{gl}_n{\times}_{O_F}\frak{gl}_n\to\Bbb A_{O_F}^1
$$
be the trace form on $\frak{gl}_n$, that is $B(X,Y)=\text{\rm tr}(XY)$
for all $X,Y\in\frak{gl}_n(R)$ with any $O_F$-algebra $R$. Since $G$
is smooth $O_F$-group scheme, we have a canonical isomorphism
$$
 \frak{g}(O_F)/\varpi^r\frak{g}(O_F)\,\tilde{\to}\,
 \frak{g}(O_F/\frak{p}^r)=\frak{g}(O_F){\otimes}_{O_F}O_F/\frak{p}^r
$$
(\cite[Chap.II, $\S 4$, Prop.4.8]{Demazure-Gabriel1970}) and the
 canonical group homomorphism $G(O_F)\to G(O_F/\frak{p}^r)$ is
 surjective, due to the formal smoothness 
\cite[p.111, Cor. 4.6]{Demazure-Gabriel1970}, whose kernel is denoted
by $K_r(O_F)$. 
For any $0<l<r$, let us denote by $K_l(O_F/\frak{p}^r)$ the kernel of
the canonical 
 group homomorphism $G(O_F/\frak{p}^r)\to G(O_F/\frak{p}^l)$ which is
 surjective.  

Through out this papaer, let us 
assume that $p$ is prime to $n$. Then 
the following basic assumptions on $G$ are satisfied 
\begin{itemize}
\item[I)] $B:\frak{g}(\Bbb F)\times\frak{g}(\Bbb F)\to\Bbb F$ is
  non-degenerate, 
\item[II)] for any integers $r=l+l^{\prime}$ with 
           $0<l^{\prime}\leq l$, we have a group isomorphism
$$
 \frak{g}(O_F/\frak{p}^{l^{\prime}})\,\tilde{\to}\,K_l(O_F/\frak{p}^r)
$$
           defined by 
$X\npmod{\frak{p}^{l^{\prime}}}\mapsto1+\varpi^lX\npmod{\frak{p}^r}$,
\item[III)] if $r=2l-1\geq 3$ is odd, then we have a mapping
$$
 \frak{g}(O_F)\to K_{l-1}(O_F/\frak{p}^r)
$$
defined by 
$X\mapsto(1+\varpi^{l-1}X+2^{-1}\varpi^{2l-2}X^2)\npmod{\frak{p}^r}$.
\end{itemize}
The condition I) implies that 
$B:\frak{g}(O_F/\frak{p}^l)\times\frak{g}(O_F/\frak{p}^l)
   \to O_F/\frak{p}^l$ 
is non-degenerate for all $l>0$, and so 
$B:\frak{g}(O_F)\times\frak{g}(O_F)\to O_F$ is also non-degenerate. 
By the condition II), $K_l(O_F/\frak{p}^r)$ is a commutative normal
subgroup of $G(O_F/\frak{p}^r)$, and its character is
$$
 \chi_{\beta}(1+\varpi^lX\npmod{\frak{p}^r})
 =\psi\left(\varpi^{-l^{\prime}}B(X,\beta)\right)
 \quad
 (X\npmod{\frak{p}^{l^{\prime}}}
  \in\frak{g}(O_F/\frak{p}^{l^{\prime}}))
$$
with 
$\beta\npmod{\frak{p}^{l^{\prime}}}
 \in\frak{g}(O_F/\frak{p}^{l^{\prime}})$.

Since 
any finite dimensional complex continuous representation of
the compact group $G(O_F)$ factors through the canonical group
homomorphism $G(O_F)\to G(O_F/\frak{p}^r)$ for some $0<r\in\Bbb Z$, we
want to know the irreducible complex representations of the finite
group $G(O_F/\frak{p}^r)$. 
Let us assume
that $r>1$ and put $r=l+l^{\prime}$ with the minimal integer $l$ such
that $0<l^{\prime}\leq l$, that is
$$
 l^{\prime}=\begin{cases}
             l&:\text{\rm if $r=2l$},\\
             l-1&:\text{\rm if $r=2l-1$}.
            \end{cases}
$$
Let $\delta$ be an irreducible complex representation of
$G(O_F/\frak{p}^r)$. The Clifford's theorem says that the restriction 
$\delta|_{K_l(O_F/\frak{p}^r)}$ is a sum of the 
$G(O_F/\frak{p}^r)$-conjugates of
characters of $K_l(O_F/\frak{p}^r)$:
\begin{equation}
 \delta|_{K_l(O_F/\frak{p}^r)}
 =\left(\bigoplus_{\dot\beta\in\Omega}\chi_{\beta}\right)^m
\label{eq:decomposition-formula-of-delta}
\end{equation}
with an adjoint 
$G(O_F/\frak{p}^{l^{\prime}})$-orbit 
$\Omega\subset\frak{g}(O_F/\frak{p}^{l^{\prime}})$. In this way the
irreducible complex representations of $G(O_F/\frak{p}^r)$ correspond
to adjoint $G(O_F/\frak{p}^{l^{\prime}})$-orbits in 
$\frak{g}(O_F/\frak{p}^{l^{\prime}})$. 

Fix an adjoint $G(O_F/\frak{p}^{l^{\prime}})$-orbit 
$\Omega\subset\frak{g}(O_F/\frak{p}^{l^{\prime}})$ and let us denote
by $\Omega\sphat$ the set of the equivalence classes of the
irreducible complex representations of $G(O_F/\frak{p}^{l^{\prime}})$
correspond to $\Omega$. Then \cite{Takase2021} gives a parametrization
of $\Omega\sphat$ as follows:

\begin{thm}\label{th:parametrization-of-omega-sphat-in-general}
Take a representative $\beta\npmod{\frak{p}^{l^{\prime}}}\in\Omega$
($\beta\in\frak{g}O_F)$) and assume that
\begin{enumerate}
\item the centralizer $G_{\beta}=Z_G(\beta)$ of
      $\beta\in\frak{g}(O_F)$ in $G$ is smooth over $O_F$,
\item the characteristic polynomial 
      $\chi_{\overline\beta}(t)=\det(t\cdot 1_n-\overline\beta)$ of 
      $\overline\beta=\beta\pmod{\frak p}\in\frak{g}(\Bbb F)
        \subset\frak{gl}_n(\Bbb F)$ is
      the minimal polynomial of $\overline\beta\in M_n(\Bbb F)$.
\end{enumerate}
Then there exists a bijection $\theta\mapsto\delta_{\beta,\theta}$ of
the set 
$$
 \left\{\theta\in G_{\beta}(O_F/\frak{p}^r)\sphat\;\;\;
         \text{\rm s.t. $\theta=\chi_{\beta}$ on 
               $G_{\beta}(O_F/\frak{p}^r)\cap K_l(O_F/\frak{p}^r)$}
        \right\}
$$
onto $\Omega\sphat$.
\end{thm}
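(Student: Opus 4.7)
The plan is to combine Clifford theory for the normal abelian subgroup $K_l(O_F/\mathfrak{p}^r)$ with a careful analysis of the stabilizer of the character $\chi_\beta$, splitting into the even case $r=2l$ and the odd case $r=2l-1$.

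First, I would identify the stabilizer $H \subset G(O_F/\mathfrak{p}^r)$ of $\chi_\beta$. Because $B$ is non-degenerate (condition I) and by the isomorphism of II), an element $g \in G(O_F/\mathfrak{p}^r)$ fixes $\chi_\beta$ iff $\operatorname{Ad}(g)\beta \equiv \beta \pmod{\mathfrak{p}^{l'}}$. The smoothness of $G_\beta$ together with condition 2) (the characteristic polynomial of $\overline\beta$ is its minimal polynomial) guarantees that the centralizer scheme behaves well modulo $\mathfrak{p}^k$ for all $k$, so the stabilizer is precisely $H=G_\beta(O_F/\mathfrak{p}^r)\cdot K_l(O_F/\mathfrak{p}^r)$. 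Simultaneously, II) identifies $G_\beta(O_F/\mathfrak{p}^r)\cap K_l(O_F/\mathfrak{p}^r)$ with $\mathfrak{g}_\beta(O_F/\mathfrak{p}^{l'})$, on which $\chi_\beta$ restricts to the character $X\mapsto\psi(\varpi^{-l'}B(X,\beta))$; this is the compatibility required by the hypothesis on $\theta$.

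In the even case $r=2l$ one has $K_l \cap K_l = K_l$ already abelian, and the hypothesis on $\theta$ says exactly that $\chi_\beta$ and $\theta$ agree on $G_\beta(O_F/\mathfrak{p}^r)\cap K_l(O_F/\mathfrak{p}^r)$. Hence they patch to a well-defined character $\widetilde\theta$ of $H$, and I set
\[
  \delta_{\beta,\theta}
    = \operatorname{Ind}_H^{G(O_F/\mathfrak{p}^r)}\widetilde\theta.
\]
Mackey's irreducibility criterion applies because $H$ is the full stabilizer of $\chi_\beta$ and $K_l$ is normal; the resulting representation is irreducible, lies in $\Omega\sphat$ by Frobenius reciprocity and (\ref{eq:decomposition-formula-of-delta}), and distinct $\theta$'s give non-isomorphic $\delta_{\beta,\theta}$'s since their restrictions to $G_\beta(O_F/\mathfrak{p}^r)\cdot K_l/K_l$ differ. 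Surjectivity onto $\Omega\sphat$ follows from the standard Clifford correspondence combined with a dimension count: $|\Omega|=[G(O_F/\mathfrak{p}^{l'}):G_\beta(O_F/\mathfrak{p}^{l'})]$ matches $\dim\delta_{\beta,\theta}=[G(O_F/\mathfrak{p}^r):H]$.

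The main obstacle is the odd case $r=2l-1$, where $\chi_\beta$ is defined on $K_l$ but must be analyzed through the larger group $K_{l-1}$. Here condition III) provides the set-theoretic lift $X\mapsto 1+\varpi^{l-1}X+2^{-1}\varpi^{2l-2}X^2$, which realises $K_{l-1}(O_F/\mathfrak{p}^r)/\ker\chi_\beta$ as a Heisenberg group with symplectic form
\[
  (\overline X,\overline Y)
    \longmapsto \psi\!\left(\varpi^{-1}B(\beta,[X,Y])\right)
\]
on $\mathfrak{g}(\mathbb{F})/\mathfrak{g}_{\overline\beta}(\mathbb{F})$; the minimality of the characteristic polynomial of $\overline\beta$ is used precisely to verify that this form is non-degenerate on the quotient. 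Stone--von Neumann then furnishes a canonical Heisenberg representation $\rho$ with central character $\chi_\beta$, which extends via a Weil-type construction to $H$ using the adjoint action of $G_\beta(O_F/\mathfrak{p}^r)$. A character $\theta$ satisfying the hypothesis then parametrises a twist, and one sets $\delta_{\beta,\theta}=\operatorname{Ind}_H^{G(O_F/\mathfrak{p}^r)}(\rho\otimes\widetilde\theta)$. Verifying that this Weil extension is well-defined, that it depends only on $\theta$ and not on auxiliary choices, and that the count $\sum_\theta(\dim\delta_{\beta,\theta})^2$ accounts for the full $\chi_\beta$-isotypic subspace in the regular representation is where the technical weight of the proof lies.
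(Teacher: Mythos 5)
Your route is the same as the paper's: the paper does not reprove this theorem but recalls the construction of \cite{Takase2021} --- Clifford theory over the abelian normal subgroup $K_l(O_F/\frak{p}^r)$, reduction to the inertia group $G(O_F/\frak{p}^r;\beta)$, extension of $\theta$ by $\chi_\beta$ in the even case, and a Heisenberg/Weil-type construction in the odd case, followed by induction --- and your sketch follows exactly this pattern. One concrete slip: from ``$g$ fixes $\chi_\beta$ iff $\text{\rm Ad}(g)\beta\equiv\beta\npmod{\frak{p}^{l'}}$'' the inertia group is $G_\beta(O_F/\frak{p}^r)\cdot K_{l'}(O_F/\frak{p}^r)$, not $G_\beta(O_F/\frak{p}^r)\cdot K_l(O_F/\frak{p}^r)$; for odd $r=2l-1$ these differ, since every $1+\varpi^{l-1}X$ centralizes $\beta$ modulo $\frak{p}^{l-1}$, so $K_{l-1}$ lies in the stabilizer. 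Induction of a one-dimensional extension from $G_\beta\cdot K_l$ would not be irreducible; your odd-case paragraph implicitly works with $G_\beta\cdot K_{l-1}$, so the identification of $H$ should be corrected rather than left ambiguous.

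The genuine gap is in the odd case. Stone--von Neumann only gives the Schr\"odinger representation of the Heisenberg quotient together with a \emph{projective} action of the stabilizer; the substance of the theorem is (i) that the pairing $D_\beta(\dot X,\dot Y)=B([X,Y],\overline\beta)$ has radical exactly $\frak{g}_\beta(\Bbb F)$ (so the representation of $K_{l-1}(O_F/\frak{p}^r)$ splits into a Schr\"odinger part on $\Bbb V_\beta$ and a part on the radical governed by the character $\rho_\theta$, as in Proposition \ref{prop:another-expression-of-pi-beta-psi}), and above all (ii) that there exists a genuine group homomorphism $U:G_\beta(O_F/\frak{p}^r)\to GL_{\Bbb C}(L^2(\Bbb W'))$ intertwining the adjoint action on $\sigma^{\beta,\theta}$ and normalized so that $U(h)=1$ on $G_\beta(O_F/\frak{p}^r)\cap K_{l-1}(O_F/\frak{p}^r)$; only with this splitting of the cocycle and this normalization is $\sigma_{\beta,\theta}(hg)=\theta(h)U(h)\sigma^{\beta,\theta}(g)$ well defined, independent of auxiliary choices, and does $\theta\mapsto\sigma_{\beta,\theta}$ exhaust $\text{\rm Irr}(G(O_F/\frak{p}^r;\beta),\chi_\beta)$. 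This is precisely the main result of \cite{Takase2021} that the paper quotes; your proposal flags it as ``where the technical weight lies'' but does not supply it, so as written the odd case --- and hence the bijection claimed in the theorem --- is not actually established.
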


The correspondence $\theta\mapsto\delta_{\beta,\theta}$ is given by
the following procedure. 
The second condition in the theorem implies 
$$
 G_{\beta}(O_F/\frak{p}^r)
 =G(O_F/\frak{p}^r)\cap
  \left(O_F/\frak{p}^r\right)[\beta\npmod{\frak{p}^r}],
$$
in particular $G_{\beta}(O_F/\frak{p}^r)$ is commutative. So 
$G_{\beta}(O_F/\frak{p}^r)\sphat$ means the character group of 
$G_{\beta}(O_F/\frak{p}^r)$. 

$\Omega\sphat$ consists of the irreducible complex representations
whose restriction to $K_l(O_F/\frak{p}^r)$ contains the character
$\chi_{\beta}$. Then the Clifford's theory says the followings: put
\begin{align*}
 G(O_F/\frak{p}^r;\beta)
 &=\left\{g\in G(O_F/\frak{p}^r)\mid
           \chi_{\beta}(g^{-1}hg)=\chi_{\beta}(h)\;
             \forall h\in K_l(O_F/\frak{p}^r)\right\}\\
 &=\left\{g\in G(O_F/\frak{p}^r)\mid
           \text{\rm Ad}(g)\beta\equiv\beta
               \npmod{\frak{p}^{l^{\prime}}}\right\}
\end{align*}
and let us denote by 
$\text{\rm Irr}(G(O_F/\frak{p}^r;\beta),\chi_{\beta})$ the set of the
equivalence classes of the irreducible
 complex representations $\sigma$ of $G(O_F/\frak{p}^r;\beta)$ such
 that the restriction $\sigma|_{K_l(O_F/\frak{p}^r)}$ contains the
 character $\chi_{\beta}$. 
Then 
$\sigma\mapsto
 \text{\rm Ind}_{G(O_F/\frak{p}^r;\beta)}^{G(O_F/\frak{p}^r)}\sigma$
 gives a bijection of 
$\text{\rm Irr}(G(O_F/\frak{p}^r;\beta),\chi_{\beta})$ onto 
$\Omega\sphat$. 

Since $G_{\beta}$ is smooth over $O_F$, the canonical homomorphism 
$G_{\beta}(O_F/\frak{p}^r)\to G_{\beta}(O_F/\frak{p}^{l^{\prime}})$ is
 surjective. Hence we have
$$
 G(O_F/\frak{p}^r;\beta)
 =G_{\beta}(O_F/\frak{p}^r)\cdot K_{l^{\prime}}(O_F/\frak{p}^r).
$$
If $r=2l$ is even, then $l^{\prime}=l$ and, for any character 
$\theta\in G_{\beta}(O_F/\frak{p}^r)$ such that 
$\theta=\chi_{\beta}$ on 
$G_{\beta}(O_F/\frak{p}^r)\cap K_l(O_F/\frak{p}^r)$, the character 
$$
 \sigma_{\theta,\beta}(gh)=\theta(g)\cdot\chi_{\beta}(h)
 \quad
 (g\in G_{\beta}(O_F/\frak{p}^r), h\in K_l(O_F/\frak{p}^r))
$$
of $G(O_F/\frak{p}^r;\beta)$ is well-defined, and 
$\theta\mapsto\sigma_{\theta,\beta}$ is a surjection onto 
$\text{\rm Irr}(G(O_F/\frak{p}^r;\beta),\chi_{\beta})$. Hence
$$
 \theta\mapsto
 \delta_{\theta,\beta}
 =\text{\rm Ind}_{G(O_F/\frak{p}^r;\beta)}^{G(O_F/\frak{p}^r)}
   \sigma_{\theta,\beta}
$$
is the bijection of Theorem
\ref{th:parametrization-of-omega-sphat-in-general}.

If $r=2l-1$ is odd, then $l^{\prime}=l-1$. Let us denote by 
$\frak{g}_{\beta}=\text{\rm Lie}(G_{\beta})$ the Lie algebra
$O_F$-scheme of the smooth $O_F$-group scheme $G_{\beta}$. Then 
$$
 \Bbb V_{\beta}=\frak{g}(\Bbb F)/\frak{g}_{\beta}(\Bbb F)
$$
is a symplectic $\Bbb F$-space with a symplectic $\Bbb F$-form 
$$
 D_{\beta}(\dot X,\dot Y)=B([X,Y],\overline{\beta})\in\Bbb F
 \quad
 (X,Y\in\frak{g}(\Bbb F)).
$$
Let $H_{\beta}=\Bbb V_{\beta}\times\Bbb C^1$ be the Heisenberg group
associated with $(\Bbb V_{\beta},D_{\beta})$ and 
$(\sigma^{\beta},L^2(\Bbb W^{\prime}))$ the Schr\"odinger
representation of $H_{\beta}$ associated with a polarization 
$\Bbb V_{\beta}=\Bbb W^{\prime}\oplus\Bbb W$. More explicitly the
group operation of $H_{\beta}$ is defined by
$$
 (u,s)\cdot(v,t)=(u+v,st\cdot\widehat{\psi}(2^{-1}D_{\beta}u,v))
$$
where $\widehat{\psi}(\overline x)=\psi(\varpi^{-1}x)$ for 
$\overline x=x\npmod{\frak p}\in\Bbb F$, and the action of 
$h=(u,s)\in H_{\beta}$ on $f\in L^2(\Bbb W^{\prime})$ (a complex-valued
function on $\Bbb W^{\prime}$) is defined by
$$
 (\sigma^{\beta}(h)f)(w)
 =s\cdot\widehat{\chi}
         \left(2^{-1}D_{\beta}(u_-,u_+)+D_{\beta}(w,u_+)\right)\cdot
   f(w+u_-)
$$
where $u=u_-+u_+\in\Bbb V_{\beta}=\Bbb W^{\prime}\oplus\Bbb W$.

Take a character $\theta:G_{\beta}(O_F/\frak{p}^r)\to\Bbb C^{\times}$
such that 
$$
 \theta=\chi_{\beta}\;\;\text{\rm on}\;\;
 G_{\beta}(O_F/\frak{p}^r)\cap K_l(O_F/\frak{p}^r).
$$ 
Then an additive
character $\rho_{\theta}:\frak{g}_{\beta}(\Bbb F)\to\Bbb C^{\times}$
is defined by
$$
 \rho_{\theta}(X\nnpmod{\frak p})
 =\chi\left(-\varpi^{-l}B(X,\beta)\right)\cdot
  \theta\left(1+\varpi^{l-1}X+2^{-1}\varpi^{2l-2}X^2
              \nnpmod{\frak{p}^r}\right)
$$
with $X\in\frak{g}_{\beta}(O_F)$. 
Fix a $\Bbb F$-vector subspace $V\subset\frak{g}(\Bbb F)$ such that 
$\frak{g}(\Bbb F)=V\oplus\frak{g}_{\beta}(\Bbb F)$. Then an
irreducible representation 
$(\sigma^{\beta,\theta},L^2(\Bbb W^{\prime}))$ of
$K_{l-1}(O_F/\frak{p}^r)$ is defined by the following proposition:

\begin{prop}
\label{prop:another-expression-of-pi-beta-psi}
Take a $g=1+\varpi^{l-1}T\npmod{\frak{p}^r}\in K_{l-1}(O_F/\frak{p}^r)$ 
with $T\in\frak{gl}_n(O_F)$. Then we have 
$T\npmod{\frak{p}^{l-1}}\in\frak{g}(O_F/\frak{p}^{l-1})$ and
$$
 \sigma^{\beta,\rho}(g)
  =\tau\left(\varpi^{-l}B(T,\beta)
            -2^{-1}\varpi^{-1}B(T^2,\beta)\right)\cdot
   \rho_{\theta}(Y)\cdot\sigma^{\beta}(v,1)
$$
where $\overline T=[v]+Y\in\frak{g}(\Bbb F)$ with 
$v\in\Bbb V_{\beta}$ and 
$Y\in\frak{g}_{\beta}(\Bbb F)$. 
\end{prop}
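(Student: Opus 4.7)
The plan is to reduce the general element $g=1+\varpi^{l-1}T\in K_{l-1}(O_F/\frak{p}^r)$ to the special exponential-like form $\eta(X):=1+\varpi^{l-1}X+2^{-1}\varpi^{2l-2}X^2$ with $X\in\frak{g}(O_F)$, on which $\sigma^{\beta,\theta}$ is defined directly through the Heisenberg--Schr\"odinger construction underlying condition III, and then to absorb the discrepancy into the phase $\tau(\cdot)$ via the character $\chi_\beta$ on $K_l(O_F/\frak{p}^r)$.

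First I would verify the preliminary claim that $T\pmod{\frak{p}^{l-1}}\in\frak{g}(O_F/\frak{p}^{l-1})$. With $r=2l-1$ and $l\geq 2$, the expansion
\[
\det(1+\varpi^{l-1}T)=1+\varpi^{l-1}\text{\rm tr}(T)+\varpi^{2(l-1)}e_2(T)+\sum_{k\geq 3}\varpi^{k(l-1)}e_k(T)
\]
has its tail in $\frak{p}^{3(l-1)}\subset\frak{p}^{2l-1}$, so $\det g\equiv 1\pmod{\frak{p}^{2l-1}}$ yields $\text{\rm tr}(T)\equiv -\varpi^{l-1}e_2(T)\pmod{\frak{p}^l}$, whence $\text{\rm tr}(T)\in\frak{p}^{l-1}$. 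Setting $X:=T-n^{-1}\text{\rm tr}(T)\cdot 1_n$ (well-defined since $p\nmid n$) produces an element of $\frak{g}(O_F)$ with $X\equiv T\pmod{\frak{p}^{l-1}}$, so $\overline X=\overline T\in\frak{g}(\Bbb F)$ and the decomposition $\overline T=[v]+Y$ of the statement lifts to $\overline X=[v]+Y$.

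Next I would write $g\equiv\eta(X)\cdot h\pmod{\frak{p}^{2l-1}}$ with $h\in K_l(O_F/\frak{p}^r)$: since $T-X$ is a central scalar in $\frak{p}^{l-1}O_F\cdot 1_n$, expanding $\eta(X)^{-1}(1+\varpi^{l-1}T)$ modulo $\frak{p}^{2l-1}$ is a direct matrix computation whose output is $h\equiv 1+\varpi^l Z\pmod{\frak{p}^{2l-1}}$ with an explicit $Z\in\frak{g}(O_F/\frak{p}^{l-1})$ polynomial in $T$; pairing against $\beta$ via the defining formula for $\chi_\beta$ and simplifying, the resulting phase $\chi_\beta(h)$ rearranges into exactly $\tau\bigl(\varpi^{-l}B(T,\beta)-2^{-1}\varpi^{-1}B(T^2,\beta)\bigr)$, the two summands encoding respectively the trace adjustment $X\mapsto T$ (which shifts $B(X,\beta)$ to $B(T,\beta)$ modulo terms killed by $\psi$ on $O_F$) and the second-order Baker--Campbell--Hausdorff defect between the additive form $1+\varpi^{l-1}T$ and the multiplicative form $\eta(X)$. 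Applying the definition $\sigma^{\beta,\theta}(\eta(X))=\rho_\theta(Y)\cdot\sigma^\beta(v,1)$ together with $\sigma^{\beta,\theta}(h)=\chi_\beta(h)$ and multiplying then gives the claimed identity.

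The main obstacle is the middle step: one must track all error terms to precision $\frak{p}^{2l-1}$ and verify that the cross-terms in the matrix expansion of $\eta(X)^{-1}(1+\varpi^{l-1}T)$ reassemble into exactly the advertised two-term phase. The factor $2^{-1}$ is the combinatorial signature of $\eta$ being the truncation of $\exp$ to second order, and the whole identification relies on the compatibility, systematised by condition III, between the additive structure on $\frak{g}$ and the multiplicative structure on $K_{l-1}(O_F/\frak{p}^r)$.
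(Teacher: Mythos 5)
You should first note that the paper itself contains no proof of this proposition: it is recalled from \cite{Takase2021}, and in the present paper the displayed formula effectively \emph{is} the definition of $\sigma^{\beta,\theta}$ (the text introduces $\sigma^{\beta,\theta}$ as ``defined by the following proposition''), so any proof has to start from the Heisenberg--Schr\"odinger construction of that reference. Your preliminary step is correct: $\det(1+\varpi^{l-1}T)\equiv 1\npmod{\frak{p}^{2l-1}}$ forces $\mathrm{tr}(T)\in\frak{p}^{l-1}$, and $X=T-n^{-1}\mathrm{tr}(T)1_n\in\frak{g}(O_F)$ satisfies $\overline X=\overline T$. Your factorization is also essentially right: with $s=n^{-1}\mathrm{tr}(T)\in\frak{p}^{l-1}$ one finds $h=\eta(X)^{-1}g\equiv 1+\varpi^{l-1}s\cdot 1_n-2^{-1}\varpi^{2l-2}X^2\npmod{\frak{p}^{2l-1}}$, which indeed lies in $K_l(O_F/\frak{p}^r)$.

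The gap is in the phase bookkeeping. Since $\mathrm{tr}(\beta)=0$, the scalar part of $h$ pairs to zero against $\beta$, and one gets exactly $\chi_{\beta}(h)=\psi\bigl(-2^{-1}\varpi^{-1}B(X^2,\beta)\bigr)$, i.e.\ only the \emph{quadratic} summand of the asserted phase; the linear term $\varpi^{-l}B(T,\beta)$ cannot come from $\chi_{\beta}(h)$, contrary to your claim that the two summands encode the trace adjustment and the BCH defect (the trace adjustment contributes nothing, precisely because $\beta$ is traceless and the cross terms land in $O_F$). Nor is it supplied by your base case, since you posit $\sigma^{\beta,\theta}(\eta(X))=\rho_{\theta}(Y)\cdot\sigma^{\beta}(v,1)$ with no phase. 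That base case is in fact inconsistent with your other ingredient $\sigma^{\beta,\theta}|_{K_l(O_F/\frak{p}^r)}=\chi_{\beta}$: for $X=\varpi X'$ one has $\eta(X)=1+\varpi^{l}X'$ in $K_l(O_F/\frak{p}^r)$, where your formula gives $1$ while $\chi_{\beta}$ gives $\psi\bigl(\varpi^{-(l-1)}B(X',\beta)\bigr)\neq 1$ in general. The value compatible with the construction (and with the compensating factor $\psi(-\varpi^{-l}B(X,\beta))$ built into the definition of $\rho_{\theta}$, which is there so that $\sigma^{\beta,\theta}(\eta(X))=\theta(\eta(X))$ for $X\in\frak{g}_{\beta}(O_F)$) is $\sigma^{\beta,\theta}(\eta(X))=\psi\bigl(\varpi^{-l}B(X,\beta)\bigr)\cdot\rho_{\theta}(Y)\cdot\sigma^{\beta}(v,1)$. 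With that corrected base case your reduction does close, because $B(T,\beta)=B(X,\beta)$ and $\varpi^{-1}B(T^2,\beta)\equiv\varpi^{-1}B(X^2,\beta)$ modulo $O_F$; as written, however, the argument both rests on a false premise and claims an identity for $\chi_{\beta}(h)$ that a direct computation refutes.
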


Then main result shown in \cite{Takase2021}, under the assumptions of
Theorem \ref{th:parametrization-of-omega-sphat-in-general},
is that there exists a
group homomorphism (not unique) 
$$
 U:G_{\beta}(O_F/\frak{p}^r)\to  GL_{\Bbb C}(L^2(\Bbb W^{\prime}))
$$
such that 
\begin{enumerate}
\item $\sigma^{\beta,\theta}(h^{-1}gh)
       =U(h)^{-1}\circ\sigma^{\beta,\theta}(g)
                              \circ U(h)$ for all 
      $h\in G_{\beta}(O_F/\frak{p}^r)$ and $g\in K_{l-1}(O_F/\frak{p}^r)$,
      and 
\item $U(h)=1$ for all 
      $h\in G_{\beta}(O_F/\frak{p}^r)\cap K_{l-1}(O_F/\frak{p}^r)$.
\end{enumerate}

Now an irreducible representation 
$(\sigma_{\beta,\theta},L^2(\Bbb W^{\prime}))$ is defined by
$$
 \sigma_{\beta,\theta}(hg)
 =\theta(h)\cdot U(h)\circ\sigma^{\beta,\theta}(g)
$$
for 
$hg\in G(O_F/\frak{p}^r;\beta)
 =G_{\beta}(O_F/\frak{p}^r)\cdot K_{l-1}(O_F/\frak{p}^r)$ with 
$h\in G_{\beta}(O_F/\frak{p}^r)$ and $g\in K_{l-1}(O_F/\frak{p}^r)$, and 
$\theta\mapsto\sigma_{\beta,\theta}$ is a surjection onto 
$\text{\rm Irr}(G(O_F/\frak{p}^r;\beta),\chi_{\beta})$. Then 
$$
 \theta\mapsto
 \delta_{\beta,\theta}
 =\text{\rm Ind}_{G(O_F/\frak{p}^r;\beta)}^{G(O_F/\frak{p}^r)}
   \sigma_{\beta,\theta}
$$
is the bijection of Theorem
\ref{th:parametrization-of-omega-sphat-in-general}.

Because the connected $O_F$-group scheme $G=SL_n$ is reductive, 
that is, the fibers $G{\otimes}_{O_F}K$ ($K=F,\Bbb F$) are reductive
$K$-algebraic groups, the dimension of a maximal torus
in $G{\otimes}_{O_F}K$ is independent of $K$ which is denoted by 
$\text{\rm rank}(G)$. For any $\beta\in\frak{g}(O_F)$ we have
\begin{equation}
 \dim_K\frak{g}_{\beta}(K)=\dim\frak{g}_{\beta}{\otimes}_{O_F}K
 \geq\dim G_{\beta}{\otimes}_{O_F}K\geq\text{\rm rank}(G).
\label{eq:dimension-of-centrlizer-on-lie-alg-and-group}
\end{equation}
We say $\beta$ is {\it smoothly regular} over $K$ 
if $\dim_K\frak{g}_{\beta}(K)=\text{\rm rank}(G)$ 
(see \cite[(5.7)]{Springer1966}). 
In this case $G_{\beta}{\otimes}_{O_F}K$ is smooth over $K$. 

In our case of $G=SL_n$, the
following two statements are equivalent for a $\beta\in\frak{g}(O_F)$: 
\begin{enumerate}
\item $\overline\beta\in\frak{g}(K)$ is smoothly regular over $K$,
\item the characteristic polynomial of 
      $\overline\beta\in\frak{g}(K)\subset\frak{gl}_n(K)$ is equal
  to its minimal polynomial
\end{enumerate}
where $\overline\beta\in\frak{g}(K)$ is the image of
$\beta\in\frak{g}(O_F)$ by the canonical morphism
$\frak{g}(O_F)\to\frak{g}(K)$ with $K=F$ or $\Bbb F$.

Since we have canonical isomorphisms 
$$
 \frak{g}(\Bbb F)\,\tilde{\to}\,K_{m-1}(O_F/\frak{p}^m),
 \qquad
 \frak{g}_{\beta}(\Bbb F)\,\tilde{\to}\,
  G_{\beta}(O_F/\frak{p}^m)\cap K_{m-1}(O_F/\frak{p}^m)
$$
and the canonical morphism 
$G_{\beta}(O_F)\to G_{\beta}(O_F/\frak{p}^m)$ is surjective 
for any $m>1$, we have
$$
 |G(O_F/\frak{p}^m)|=|G(\Bbb F)|\cdot q^{(m-1)\dim G},
 \qquad
 |G_{\beta}(O_F/\frak{p}^m)|
 =|G_{\beta}(\Bbb F)|\cdot q^{(m-1)\text{\rm rank}\,G}
$$
for all $m>0$. Then we have
\begin{align*}
 \sharp\Omega\sphat=
 &\sharp\left\{\theta\in G_{\beta}(O_F/\frak{p}^r)\sphat\;\;\;
          \text{\rm s.t. $\theta=\psi_{\beta}$ on 
                $G_{\beta}(O_F/\frak{p}^r)
                  \cap K_l(O_F/\frak{p}^r)$}\right\}\\
 =&\left(G_{\beta}(O_F/\frak{p}^r):G_{\beta}(O_F/\frak{p}^r)
          \cap K_l(O_F/\frak{p}^r)\right)
 =|G_{\beta}(O_F/\frak{p}^l)|\\
 =&|G_{\beta}(\Bbb F)|\cdot q^{(l-1)\text{\rm rank}\,G}
 =\frac{|G(\Bbb F)|}
       {\sharp\overline\Omega}\cdot q^{(l-1)\text{\rm rank}\,G}
\end{align*}
where $\overline\Omega\subset\frak{g}(\Bbb F)$ is the image of 
$\Omega\subset\frak{g}(O_F/\frak{p}^{l^{\prime}})$ under the canonical
morphism $\frak{g}(O_F/\frak{p}^{l^{\prime}})\to\frak{g}(\Bbb F)$. 
On the other hand we have
$$
 \dim\sigma_{\beta,\theta}
 =\begin{cases}
   1&:\text{\rm $r$ is even},\\
   q^{\frac 12\dim_{\Bbb F}(\frak{g}(\Bbb F)/\frak{g}_{\beta}(\Bbb F))}
   =q^{(\dim G-\text{\rm rank}\,G)/2}
     &:\text{\rm $r$ is odd},
  \end{cases}
$$
so we have
\begin{align}
 \dim\delta_{\beta,\theta}
 &=\left(G(O_F/\frak{p}^r):G(O_F/\frak{p}^r;\beta)\right)
    \cdot\dim\sigma_{\beta,\theta} \nonumber\\
 &=\sharp\overline\Omega\cdot q^{(r-2)(\dim G-\text{\rm rank}\,G)/2}.
\label{eq:dimension-formula-of-delta-beta-theta}
\end{align}

\begin{rem}
\label{remark:all-we-need-is-the-surejectivity-of-canonical-hom}
The assumption in Theorem
\ref{th:parametrization-of-omega-sphat-in-general} that the
centralizer $G_{\beta}$ to be smooth $O_F$-group scheme can be
replaced by the surjectivity of the canonical morphisms
$$
 G_{\beta}(O_F)\to G_{\beta}(O_F/\frak{p}^l),
 \quad
 \frak{g}_{\beta}(O_F)\to\frak{g}_{\beta}(O_F/\frak{p}^l),
$$
for all $l>0$. 
\end{rem}

\subsection{Regular irreducible character associated with tamely ramified
            extensions}
\label{subsec:symplectic-space-associated-with-tamely-ramified-ext}
Let $K_/F$ be a field extension of degree $n>1$. 
Let
$$
 e=e(K/F),
 \qquad
 f=f(K/F)
$$
be the ramification index and the inertial degree of $K/F$
respectively so that we have $ef=n$. Since we assume that $p$ is prime
to $n$, the extension $K/F$ is tamely ramified. 

Let $K_0/F$ be the maximal unramified subextension of $K/F$. Then 
$K_0/F$ is a cyclic Galois extension whose Galois group is generated
by the geometric Frobenius automorphism $\text{\rm Fr}$ which induces
the inverse of the Frobenius automorphism $[x\mapsto x^q]$ of the
residue field $\Bbb K_0$ over $\Bbb F$. Since 
$K/K_0$ is totally ramified, there exists a prime element 
$\varpi_K$ of $K$ such that $\varpi_K^e\in K_0$. Then 
$\{1,\varpi_K,\varpi_K^2,\cdots,\varpi_K^{e-1}\}$is an $O_{K_0}$-basis
of $O_K$. The following two propositions are proved by 
Shintani \cite[Lemma 4-7, Cor.1, Cor.2,pp.545-546]{Shintani1968}:

\begin{prop}\label{prop:shintani-lemma-on-generator-of-ok-over-of}
Put 
$\beta=\sum_{i=0}^{e-1}a_i\varpi_K^i\in O_K$ ($a_i\in O_{K_0}$). Then
$O_K=O_F[\beta]$ if and only if the following two conditions are
satisfied:
\begin{enumerate}
\item $a_0^{\text{\rm Fr}}\not\equiv a_0\npmod{\frak{p}_{K_0}}$ if
  $f>1$, 
\item $a_1\in O_{K_0}^{\times}$ if $e>1$.
\end{enumerate}
\end{prop}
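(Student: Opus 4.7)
The plan is to translate $O_K = O_F[\beta]$ into a statement about the images of the powers of $\beta$ in a small Artinian quotient, and then analyse that quotient layer by layer.

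First, note that $O_K$ is $O_F$-free of rank $n = ef$ and $O_F[\beta]$ is generated as an $O_F$-submodule by $\{1,\beta,\ldots,\beta^{n-1}\}$. By Nakayama's lemma for the local ring $O_F$, the equality $O_K = O_F[\beta]$ holds iff the reductions of these $n$ elements span $R := O_K/\mathfrak{p}_F O_K$ as an $\mathbb{F}$-vector space (and then they are automatically a basis, by dimension count). Tameness together with $\varpi_K^e \in K_0$ gives $\mathfrak{p}_F O_K = \mathfrak{p}_{K_0} O_K = \mathfrak{p}_K^e$, and since $K_0/F$ is unramified, $\mathbb{K}_0 = O_{K_0}/\mathfrak{p}_{K_0}$ embeds in $R$, while the $O_{K_0}$-basis $\{1,\varpi_K,\ldots,\varpi_K^{e-1}\}$ of $O_K$ induces an identification
\[
 R \;\cong\; \mathbb{K}_0[\pi]/(\pi^e),
 \qquad \pi := \varpi_K \bmod \mathfrak{p}_K^e.
\]
Under this presentation $\bar\beta = \bar a_0 + \bar a_1 \pi + \cdots + \bar a_{e-1}\pi^{e-1}$, and the question becomes: when does $\bar\beta$ generate $R$ as an $\mathbb{F}$-algebra?

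I would next analyse the two layers of the local ring $R$ separately. Reducing modulo the maximal ideal $\pi R$ collapses $\bar\beta$ to $\bar a_0$ and $R$ to its residue field $\mathbb{K}_0$, so $\mathbb{F}[\bar\beta]$ surjects onto $\mathbb{K}_0$ iff $\mathbb{F}[\bar a_0] = \mathbb{K}_0$; by the cyclicity of $\mathrm{Gal}(K_0/F) = \langle \mathrm{Fr}\rangle$, this is the content of condition (1). Assuming (1) holds, $\mathbb{F}[\bar\beta]$ contains $\mathbb{K}_0$, hence also the element $\bar\beta - \bar a_0 = \pi(\bar a_1 + \bar a_2 \pi + \cdots)$. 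The factor $\bar a_1 + \bar a_2 \pi + \cdots \in R$ is a unit of $R$ iff $\bar a_1 \neq 0$, i.e.\ iff condition (2) holds, in which case $\bar\beta - \bar a_0$ is a uniformizer of $R$, and therefore $\mathbb{K}_0[\bar\beta - \bar a_0] = \mathbb{K}_0[\pi] = R$, giving $\mathbb{F}[\bar\beta] = R$ as desired.

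Conversely, failure of (1) confines $\mathbb{F}[\bar\beta]$ to an $\mathbb{F}$-subalgebra whose projection to $\mathbb{K}_0$ is not surjective, while failure of (2) (with $e > 1$) places $\bar\beta \in \mathbb{K}_0 + \pi^2 R$, so every $\mathbb{F}$-linear combination of its powers still lies in $\mathbb{K}_0 + \pi^2 R \neq R$. This gives the necessity direction and closes the proof. The only genuinely delicate step is the identification $\mathfrak{p}_F O_K = \mathfrak{p}_K^e$, which is precisely where tameness and the choice of $\varpi_K$ with $\varpi_K^e \in K_0$ are used; once this clean splitting of $R$ into a residue-field layer and a uniformizer layer is in hand, the argument is a routine filtration analysis and no serious obstacle remains.
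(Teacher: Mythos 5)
The paper itself offers no proof of this proposition (it is quoted from Shintani), so I can only judge your argument on its own terms. Your reduction is the natural one and its skeleton is sound: Nakayama reduces $O_K=O_F[\beta]$ to $\Bbb F[\bar\beta]=R$ with $R=O_K/\frak{p}_FO_K\cong\Bbb K_0[\pi]/(\pi^e)$, and your two necessity arguments and the unit/uniformizer step in the second layer are correct. The genuine gap is the sentence identifying ``$\Bbb F[\bar a_0]=\Bbb K_0$'' with condition (1) ``by cyclicity of $\text{\rm Gal}(K_0/F)$''. Condition (1) only says that $\bar a_0$ is not fixed by $\text{\rm Fr}$, i.e.\ $\bar a_0\notin\Bbb F$; when $f$ is composite this does not force $\bar a_0$ to generate $\Bbb K_0$ over $\Bbb F$ --- it may generate a proper intermediate field, and cyclicity does not help, since a cyclic group of composite order has proper non-trivial subgroups. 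Your sufficiency direction really uses the stronger hypothesis that $\bar a_0$ generates $\Bbb K_0$ over $\Bbb F$ (equivalently $a_0^{\sigma}\not\equiv a_0\pmod{\frak{p}_{K_0}}$ for every $1\neq\sigma\in\text{\rm Gal}(K_0/F)$, equivalently $O_{K_0}=O_F[a_0]$). Indeed, with $f=4$, $\bar a_0$ a generator of the quadratic subfield of $\Bbb K_0$, and $a_1\in O_{K_0}^{\times}$, conditions (1) and (2) as printed hold, yet the image of $\Bbb F[\bar\beta]$ in $R/\pi R=\Bbb K_0$ is the proper subfield $\Bbb F[\bar a_0]$, so by your own Nakayama criterion $O_F[\beta]\neq O_K$. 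So, as written, the ``if'' direction of your argument does not establish the displayed statement; it establishes the variant in which (1) is strengthened as above, which is evidently what the proof requires unless $f$ is prime. You should make that strengthening (or the restriction on $f$) explicit rather than pass it off as a consequence of cyclicity.

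A second, smaller gap: ``assuming (1), $\Bbb F[\bar\beta]$ contains $\Bbb K_0$'' is asserted, not proved. Surjecting onto the residue field $R/\pi R$ does not formally give containment of the canonical copy of $\Bbb K_0$ inside $R$; a lifting argument is needed. It is a one-line fix: $R$ has characteristic $p$, so for $q^{fm}\geq e$ one has $\bar\beta^{\,q^{fm}}=\bar a_0^{\,q^{fm}}+\left(\bar a_1\pi+\cdots+\bar a_{e-1}\pi^{e-1}\right)^{q^{fm}}=\bar a_0$, whence $\bar a_0\in\Bbb F[\bar\beta]$ and, with the strengthened form of (1), $\Bbb K_0=\Bbb F[\bar a_0]\subset\Bbb F[\bar\beta]$; alternatively one can invoke uniqueness of Teichm\"uller lifts (coefficient fields). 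With these two repairs the argument is complete. A cosmetic point: $\frak{p}_FO_K=\frak{p}_K^e$ is simply the definition of the ramification index, and $\{1,\varpi_K,\dots,\varpi_K^{e-1}\}$ is an $O_{K_0}$-basis of $O_K$ for any totally ramified extension; tameness and the normalization $\varpi_K^e\in K_0$ are not what those particular steps rest on.
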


\begin{prop}\label{prop:shintani-cor-of-irreducibility-chrara-poly}
Let $\chi_{\beta}(t)\in O_F[t]$ be the characteristic polynomial of
$\beta\in O_K\subset M_n(O_F)$ via the regular representation with
respect to an $O_F$-basis of $O_K$. If $O_K=O_F[\beta]$, then 
\begin{enumerate}
\item $\chi_{\beta}(t)\npmod{\frak{p}_F}\in\Bbb F[t]$ is the minimal
  polynomial of $\overline\beta\in M_n(\Bbb F)$, 
\item $\chi_{\beta}(t)\npmod{\frak{p}_F}=p(t)^e$ with an irreducible
  polynomial $p(t)\in\Bbb F[t]$,
\item if $e>1$, then $\chi_{\beta}(t)\npmod{\frak{p}_F^2}$ is
  irreducible over $O_F/\frak{p}_F^2$.
\end{enumerate}
\end{prop}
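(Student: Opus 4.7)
My plan rests on the ring isomorphism $O_K \cong O_F[t]/(\chi_\beta(t))$: since $O_K = O_F[\beta]$ is free of rank $n$ over $O_F$ and $\chi_\beta$ is monic of degree $n$ with $\chi_\beta(\beta)=0$, the canonical surjection $O_F[t]/(\chi_\beta(t)) \twoheadrightarrow O_F[\beta]$ is an isomorphism. Reducing modulo $\frak{p}_F$ yields the $\Bbb F$-algebra isomorphism $\Bbb F[t]/(\chi_\beta\npmod{\frak{p}_F}) \xrightarrow{\sim} O_K/\frak{p}_F O_K$. For (1), Cayley--Hamilton gives one divisibility; conversely, if $m(t) \in \Bbb F[t]$ is the minimal polynomial of $\overline\beta \in M_n(\Bbb F)$, then $m(\overline\beta) \cdot 1 = 0$ in $O_K/\frak{p}_F O_K$, so through the above isomorphism $(\chi_\beta\npmod{\frak{p}_F}) \mid m(t)$. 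Both polynomials are monic of degree $n$, hence equal.

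For (2), tame ramification gives $\frak{p}_F O_K = \frak{p}_K^e$, so $O_K/\frak{p}_F O_K$ is a local Artinian ring with residue field $\Bbb K_0$ of degree $f$ over $\Bbb F$. The isomorphism above then forces $\Bbb F[t]/(\chi_\beta\npmod{\frak{p}_F})$ to be local, so $\chi_\beta\npmod{\frak{p}_F} = p(t)^k$ for some irreducible $p \in \Bbb F[t]$; matching residue fields gives $\deg p = f$, and degree count then yields $k = e$.

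For (3), I argue by contradiction: assume $e > 1$ and a factorization $\chi_\beta \equiv F(t) G(t) \npmod{\frak{p}_F^2}$ with $F, G$ monic of positive degree. Reducing mod $\frak{p}_F$ and using (2) forces $\overline F = p^i$, $\overline G = p^{e-i}$ with $0 < i < e$. The central fact is $v_K(\tilde p(\beta)) = 1$ for any monic lift $\tilde p \in O_F[t]$ of $p$: writing $\tilde p(\beta) = \tilde p(a_0) + (\beta - a_0)\, q(\beta)$ with $q(a_0) = \tilde p'(a_0)$, the term $\tilde p(a_0) \in \frak{p}_F O_{K_0}$ has $v_K \geq e > 1$, while $\tilde p'(a_0) \in O_{K_0}^\times$ (separability of $p$ over $\Bbb F$) and $v_K(\beta - a_0) = 1$ by Proposition \ref{prop:shintani-lemma-on-generator-of-ok-over-of} (since $a_1 \in O_{K_0}^\times$), so the second term dominates. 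Lifting $F, G$ to monic $\tilde F, \tilde G \in O_F[t]$ and writing $\tilde F = \tilde p^i + \varpi_F h$ with $h \in O_F[t]$ gives $\tilde F(\beta) = \tilde p(\beta)^i + \varpi_F h(\beta)$; since $v_K(\varpi_F h(\beta)) \geq e > i = v_K(\tilde p(\beta)^i)$, we obtain $v_K(\tilde F(\beta)) = i$, and likewise $v_K(\tilde G(\beta)) = e - i$. Thus $v_K(\tilde F(\beta)\, \tilde G(\beta)) = e < 2e$, so $\tilde F(\beta)\tilde G(\beta) \notin \frak{p}_F^2 O_K$, contradicting $\chi_\beta(\beta) = 0$ in $O_K/\frak{p}_F^2 O_K$.

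The main obstacle is precisely this valuation bookkeeping in (3): one must track how the freedom in lifting the alleged factorization to $O_F[t]$ is absorbed into the $\varpi_F$-error, and verify that the strict inequality $i < e$ together with $v_K(\tilde p(\beta)) = 1$ prevents cancellation. The argument collapses at $e = 1$, consistent with the hypothesis.
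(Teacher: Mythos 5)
Your proof is correct. Note that the paper contains no proof of this proposition at all: it is quoted from Shintani \cite{Shintani1968}, so there is no internal argument to compare against, and your write-up is in effect a self-contained reconstruction along the natural (and essentially Shintani's) lines. The chain you use is sound: $O_F[t]/(\chi_{\beta}(t))\cong O_K$ because a surjection between free $O_F$-modules of the same finite rank is injective; reducing gives $\Bbb F[t]/(\chi_{\beta}\npmod{\frak{p}_F})\cong O_K/\frak{p}_K^e$, whose locality and residue field $\Bbb K_0$ yield (1) and (2) with $\deg p=f$ and exponent $e$; and for (3) the decisive fact $v_K(\tilde p(\beta))=1$ does rest exactly where you put it, on $a_1\in O_{K_0}^{\times}$ from Proposition \ref{prop:shintani-lemma-on-generator-of-ok-over-of} together with $p(\bar a_0)=0$ and separability of $p$, after which the ultrametric comparison $v_K(\tilde F(\beta))=i$, $v_K(\tilde G(\beta))=e-i$, $e<2e$ gives the contradiction. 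Two small points you should absorb, neither a gap: (i) in estimating $(\beta-a_0)q(\beta)$ you need $q(\beta)$, not merely $q(a_0)=\tilde p^{\,\prime}(a_0)$, to be a unit; this is immediate since $\beta\equiv a_0\npmod{\frak{p}_K}$ gives $q(\beta)\equiv q(a_0)\npmod{\frak{p}_K}$. (ii) You only exclude factorizations into \emph{monic} factors; for irreducibility over $O_F/\frak{p}_F^2$ in the usual sense (no factorization into two non-units) note that a factor reducing mod $\frak{p}_F$ to a nonzero constant is already a unit of $(O_F/\frak{p}_F^2)[t]$ (unit constant term, nilpotent higher coefficients), a factor reducing to $0$ is impossible because the product has unit leading coefficient, and so any genuine factorization reduces to $c\,p^i\cdot c^{-1}p^{e-i}$ with $0<i<e$; your valuation computation then applies verbatim to arbitrary lifts written as $\tilde c\,\tilde p^{\,i}+\varpi_F h$, monic or not.
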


We can prove the following

\begin{prop}\label{prop:regular-element-of-gl(n)}
Take a $\beta\in M_n(O_F)$ whose the characteristic polynomial be
$$
 \chi_{\beta}(t)=t^n-a_nt^{n-1}-\cdots-a_2t-a_1.
$$
If $\chi_{\beta}(t)\npmod{\frak{p}_F}\in\Bbb F[t]$ is the minimal
polynomial of $\beta\npmod{\frak{p}_F}\in M_n(\Bbb F)$, then
\begin{enumerate}
\item $\{X\in M_n(O_F)\mid[X,\beta]=0\}=O_F[\beta]$,
\item for any $m>0$, put 
$\overline\beta=\npmod{\frak{p}_F^m}\in M_n(O_F/\frak{p}_F^m)$, then 
$$
 \left\{X\in M_n(O_F/\frak{p}_F^m)\mid
               [X,\overline\beta]=0\right\}
 =O_F/\frak{p}_F^m[\overline\beta],
$$
\item there exists a $g\in GL_n(O_F)$ such that
$$
 g\beta g^{-1}=\begin{bmatrix}
                0&0&\cdots&0&a_1\\
                1&0&\cdots&0&a_2\\
                 &1&\ddots&\vdots&\vdots\\
                 & &\ddots&0&a_{n-1}\\
                 & &      &1&a_n
               \end{bmatrix}.
$$
\end{enumerate}
\end{prop}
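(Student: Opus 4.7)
The plan is to prove part (3) first, because once $\beta$ has been conjugated into companion form by an element of $GL_n(O_F)$, parts (1) and (2) collapse to a single textbook computation of the centralizer of a companion matrix over an arbitrary commutative ring. So the real content is the cyclic-vector construction for $\beta$ over $O_F$, obtained by lifting a cyclic vector from the residue field.

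For (3), I would exploit the hypothesis that $\chi_{\beta}(t)\npmod{\frak{p}_F}$ is the minimal polynomial of $\overline\beta\in M_n(\Bbb F)$: this forces $\overline\beta$ to be non-derogatory, and therefore to admit a cyclic vector $\overline v\in\Bbb F^n$, i.e.\ the $n$ vectors $\overline v,\overline\beta\overline v,\ldots,\overline\beta^{n-1}\overline v$ form an $\Bbb F$-basis of $\Bbb F^n$. I would lift $\overline v$ arbitrarily to some $v\in O_F^n$ and form the matrix
$$
  P=[\,v\mid\beta v\mid\cdots\mid\beta^{n-1}v\,]\in M_n(O_F).
$$
Its reduction $P\npmod{\frak{p}_F}$ lies in $GL_n(\Bbb F)$ by construction, so $\det P\in O_F^{\times}$ and $P\in GL_n(O_F)$. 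Setting $g=P^{-1}$, a direct check gives $(g\beta g^{-1})e_j=P^{-1}\beta^j v=e_{j+1}$ for $j<n$, while Cayley--Hamilton applied to $\chi_{\beta}$ yields $\beta^n v=\sum_{i=1}^{n}a_i\beta^{i-1}v$, so $(g\beta g^{-1})e_n=\sum_i a_i e_i$; this is precisely the companion matrix displayed in the statement.

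To deduce (1) and (2), I would transport them by the conjugation in (3) to the case $\beta=C$, the companion matrix; since this conjugation respects reduction modulo $\frak{p}_F^m$ and preserves the subalgebras $O_F[\,\cdot\,]$ and $(O_F/\frak{p}_F^m)[\,\cdot\,]$, it is enough to prove, for an arbitrary commutative ring $R$, that the centralizer of the companion matrix $C$ in $M_n(R)$ equals $R[C]$. This is standard: because $e_j=C^{j-1}e_1$ gives the standard $R$-basis of $R^n$, any $X\in M_n(R)$ commuting with $C$ is determined by $Xe_1\in R^n$, and writing $Xe_1=p(C)e_1$ for the unique $p(t)\in R[t]$ of degree $<n$ yields $Xe_j=C^{j-1}p(C)e_1=p(C)e_j$, hence $X=p(C)\in R[C]$; the reverse inclusion is obvious. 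Applying this with $R=O_F$ gives (1), and with $R=O_F/\frak{p}_F^m$ gives (2).

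I expect no serious obstacle. The only point worth noting is the Hensel-type observation that a matrix invertible modulo $\frak{p}_F$ is automatically invertible in $M_n(O_F)$, which makes the lift of the cyclic vector completely free; everything else reduces to Cayley--Hamilton and the well-known structural fact that a companion matrix has trivial centralizer beyond its own polynomial algebra.
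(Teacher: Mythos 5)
Your proof is correct and complete. The paper states this proposition without proof (it only says ``We can prove the following''), so there is no argument in the text to compare against; your route --- lifting a cyclic vector of $\overline\beta=\beta\npmod{\frak{p}_F}$ to get $P=[\,v\mid\beta v\mid\cdots\mid\beta^{n-1}v\,]\in GL_n(O_F)$ and hence the companion form in part (3), then reducing parts (1) and (2) to the fact that the centralizer of a companion matrix in $M_n(R)$ is $R[C]$ for an arbitrary commutative ring $R$ --- is the natural and most economical one, and it handles the only delicate points (invertibility of $P$ from invertibility of its reduction, Cayley--Hamilton over $O_F$, and compatibility of the conjugation with reduction modulo $\frak{p}_F^m$) correctly.
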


Now our $O_F$-group scheme $G=SL_n$ is defined based on the free
$O_F$-module $O_K$ with a fixed $O_F$-basis of $O_K$. 
Take a $\beta\in O_K$ such that 
$O_K=O_F[\beta]$ and $T_{K/F}(\beta)=0$. 
Identify the $F$-algebra $K$ with a 
$F$-subalgebra of $\text{\rm End}_n(F)$ by the regular
representation with respect to the fixed $O_F$-basis of $O_K$. 
By Proposition \ref{prop:shintani-cor-of-irreducibility-chrara-poly},
the characteristic polynomial of 
$\overline\beta=\beta\npmod{\frak{p}_F}\in M_n(\Bbb F)$ is equal to
its minimal polynomial. Then, by Proposition 
\ref{prop:regular-element-of-gl(n)}, we have
$$
 \{X\in M_n(O_F)\mid[X,\beta]=0\}=O_F[\beta]=O_K
$$
and
$$
 \{X\in M_n(O_F/\frak{p}_F^l)\mid[X,\overline\beta]=0\}
 =O_F/\frak{p}_F^l[\overline\beta]
 =O_K/\frak{p}_K^{el}
$$
for any $m>0$. Put 
$$
 U_{K/F}
 =\{\varepsilon\in O_K^{\times}\mid N_{K/F}(\varepsilon)=1\}.
$$
Then we have
$$
 G_{\beta}(O_F)=G(O_F)\cap O_K=U_{K/F}.
$$
We have also
$$
 \frak{g}_{\beta}(O_F)=\frak{g}(O_F)\cap O_K
 =\{X\in O_K\mid T_{K/F}(X)=0\}
$$
and
\begin{align*}
 G_{\beta}(O_F/\frak{p}_F^l)
 &=\{\overline\varepsilon\in\left(O_K/\frak{p}_K^{el}\right)^{\times}
    \mid N_{K/F}(\varepsilon)\equiv
    1\npmod{\frak{p}_F^l}\},\\
 \frak{g}_{\beta}(O_F/\frak{p}_F^l)
 &=\{\overline X\in O_K/\frak{p}_K^{el}\mid
     T_{K/F}(X)\equiv 0\npmod{\frak{p}_F^l}\}
\end{align*}
for all $l>0$. Then the canonical morphisms
$$
 G_{\beta}(O_F)\to G_{\beta}(O_F/\frak{p}_F^l),
 \qquad
 \frak{g}_{\beta}(O_F)\to\frak{g}_{\beta}(O_F/\frak{p}_F^l)
$$
are surjective for all $l>0$. In fact, take an 
$\varepsilon\in O_K^{\times}$ such that 
$N_{K/F}(\varepsilon)\equiv 0\npmod{\frak{p}_F^l}$. Since $K/F$ is
tamely ramified, we have $N_{K/F}(1+\frak{p}_K^{el})=1+\frak{p}_F^l$,
that is, there exists $\eta\in 1+\frak{p}_K^{el}$ such that 
$N_{K/F}(\eta)=N_{K/F}(\varepsilon)$. Then 
$\xi=\eta^{-1}\varepsilon\in U_{K/F}$ such that 
$\xi\equiv\varepsilon\npmod{\frak{p}_K^{el}}$. Take a $x\in O_K$ such
that $T_{K/F}(x)\equiv 0\npmod{\frak{p}_F^l}$, or 
$T_{K/F}(x)=a\cdot\varpi_F^l$ with $a\in O_F$. Since $K/F$ is tamely
ramified, we have $T_{K/F}(O_K)=O_F$, that is, there exists a 
$y\in O_K$ such that $T_{K/F}(y)=a$. Then $z=x-y\varpi_F^l\in O_K$
such that $T_{K/F}(z)=0$ and $z\equiv x\npmod{\frak{p}_K^{el}}$. 

Due to Remark
\ref{remark:all-we-need-is-the-surejectivity-of-canonical-hom}, 
we can apply the general theory of subsection 
\ref{subsec:regular-irred-character-of-hyperspecial-compact-subgroup}
to our $\beta\in\frak{g}(O_F)$. Take an integer $r>1$ and put 
$r=l+l^{\prime}$ with minimal integer $l$ such that 
$0<l^{\prime}\leq l$. 
Let $\Omega\subset\frak{g}(O_F/\frak{p}_F^{l^{\prime}})$ be the 
adjoint $G(O_F/\frak{p}_F^{l^{\prime}})$-orbit of 
$\beta\npmod{\frak{p}_F^{l^{\prime}}}
 \in\frak{g}(O_F/\frak{p}_F^{l^{\prime}})$, and $\Omega\sphat$ the set
 of the equivalent classes of the irreducible representations of 
$G(O_F/\frak{p}_F^r)$ corresponding to $\Omega$ via Clifford's theory
 described in subsection
 \ref{subsec:regular-irred-character-of-hyperspecial-compact-subgroup}. 
Then we have a bijection $\theta\mapsto\delta_{\beta,\theta}$ 
of the continuous unitary character $\theta$ of $U_{K/F}$ such that
\begin{enumerate}
\item $\theta$ factors through the canonical morphism
         $U_{K/F}\to\left(O_K/\frak{p}_K^{er}\right)^{\times}$, 
\item for an $\varepsilon\in U_{K/F}$ such that 
      $\varepsilon\equiv 1+\varpi_F^lx\npmod{\frak{p}_K^{er}}$ with 
      $x\in O_K$ and 
      $T_{K/F}(x)\equiv 0\npmod{\frak{p}_F^{l^{\prime}}}$, we
  have 
$\theta(\varepsilon)
 =\psi\left(\varpi_F^{-l^{\prime}}T_{K/F}(x\beta)\right)$.
\end{enumerate}
onto $\Omega\sphat$. Here $\psi:F\to\Bbb C^{\times}$ is a continuous
unitary character of the additive group $F$ such that 
$\{x\in F\mid\psi(xO_F)=1\}=O_F$. Then we have

\begin{prop}\label{prop:dimension-of-delta-beta-theta}
$$
 \dim\delta_{\beta,\theta}
 =\frac{q^{r\cdot n(n-1)/2}}
          {1-q^{-f}}\cdot
  \frac 1{(O_F^{\times}:N_{K/F}(O_K^{\times}))}\cdot
  \prod_{k=1}^n\left(1-k^{-k}\right).
$$
\end{prop}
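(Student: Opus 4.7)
My plan is to start from the dimension formula \eqref{eq:dimension-formula-of-delta-beta-theta}, $\dim\delta_{\beta,\theta} = \sharp\overline\Omega\cdot q^{(r-2)(\dim G-\text{\rm rank}\,G)/2}$, and evaluate both factors for the specific $\beta$ attached to the tamely ramified extension $K/F$. Since $\dim SL_n - \text{\rm rank}\,SL_n = (n^2-1)-(n-1) = n(n-1)$, the second factor is immediately $q^{(r-2)n(n-1)/2}$, and the problem reduces to computing $\sharp\overline\Omega = |SL_n(\Bbb F)|/|G_{\overline\beta}(\Bbb F)|$.

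For the numerator I will use the standard formula $|SL_n(\Bbb F)| = q^{n(n-1)/2}\prod_{k=2}^n(q^k-1)$. For the denominator I will apply the explicit description already established in this subsection, according to which
\begin{equation*}
 G_{\beta}(O_F/\frak{p}_F) = \{\overline\varepsilon\in(O_K/\frak{p}_K^e)^{\times}\mid N_{K/F}(\varepsilon)\equiv 1\npmod{\frak{p}_F}\}
\end{equation*}
is the kernel of the norm map on $(O_K/\frak{p}_K^e)^{\times}$ induced by $N_{K/F}:O_K^{\times}\to O_F^{\times}$. From $fe=n$ and $|O_K/\frak{p}_K|=q^f$ one gets $|(O_K/\frak{p}_K^e)^{\times}|=q^{f(e-1)}(q^f-1)=q^{n-f}(q^f-1)$, so everything reduces to computing the size of the image of $N_{K/F}$ in $\Bbb F^{\times}$.

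The key step, and the part I expect to require the most care, is this image computation. My plan is to argue as follows: for $y\in 1+\frak{p}_F$ viewed as an element of $K^{\times}$ via the inclusion $F\subset K$, one has $N_{K/F}(y)=y^n$, and because $p\nmid n$ the map $y\mapsto y^n$ is an automorphism of the pro-$p$-group $1+\frak{p}_F$. Hence $1+\frak{p}_F\subseteq N_{K/F}(O_K^{\times})$, so the image of $N_{K/F}(O_K^{\times})$ in $\Bbb F^{\times}=O_F^{\times}/(1+\frak{p}_F)$ has index exactly $(O_F^{\times}:N_{K/F}(O_K^{\times}))$, yielding image size $(q-1)/(O_F^{\times}:N_{K/F}(O_K^{\times}))$.

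Putting the pieces together, $\sharp\overline\Omega$ becomes an explicit product and multiplying by $q^{(r-2)n(n-1)/2}$ reduces the proof to routine bookkeeping of exponents. Using $\prod_{k=2}^n(q^k-1)=(q-1)^{-1}\prod_{k=1}^n(q^k-1)$ together with $\prod_{k=1}^n(q^k-1)=q^{n(n+1)/2}\prod_{k=1}^n(1-q^{-k})$, the power of $q$ in the numerator collects to $rn(n-1)/2$, the denominator produces the factor $1-q^{-f}$, and the index $(O_F^{\times}:N_{K/F}(O_K^{\times}))$ appears in the correct place, yielding the claimed formula.
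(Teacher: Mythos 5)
Your proposal is correct and follows essentially the same route as the paper: start from \eqref{eq:dimension-formula-of-delta-beta-theta}, identify $G_{\beta}(\Bbb F)$ as the kernel of the norm map $(O_K/\frak{p}_K^e)^{\times}\to\Bbb F^{\times}$, and count its order via the inclusion $1+\frak{p}_F\subseteq N_{K/F}(O_K^{\times})$; your only deviation is proving that inclusion by the $n$-th power argument on the pro-$p$ group $1+\frak{p}_F$ (using $p\nmid n$) instead of the paper's tame-norm surjectivity $N_{K/F}(1+\frak{p}_K^e)=1+\frak{p}_F$, which is an equally valid one-line substitute. Note also that your computation confirms the intended factor is $\prod_{k=1}^n\left(1-q^{-k}\right)$, the statement's $1-k^{-k}$ being a typo.
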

\begin{proof}
For the dimension formula
\eqref{eq:dimension-formula-of-delta-beta-theta}, we have
$$
 \dim G=n^2-1,
 \quad
 \text{\rm rank}\,G=n-1,
 \quad
 \sharp\overline\Omega=\frac{|G(\Bbb F)|}
                            {|G_{\beta}(\Bbb F)|}
$$
and
$$
 |G(\Bbb F)|=|SL_n(\Bbb F)|
 =q^{n^2-1}\prod_{k=2}^n\left(1-q^{-k}\right).
$$
On the other hand $G_{\beta}(\Bbb F)$ is the kernel of
$$
 (\ast):\left(O_K/\frak{p}_K\right)^{\times}\to
        \left(O_F/\frak{p}_F\right)^{\times}
 \quad
 \left(\varepsilon\npmod{\frak{p}_K^e}\mapsto
       {N_{K/F}(\varepsilon)}\npmod{\frak{p}_F}\right).
$$
Since $K/F$ is tamely ramified extension, we have
$$
 1+\frak{p}_F=N_{K/F}(1+\frak{p}_K^e)
 \subset N_{K/F}(O_K^{\times})\subset O_F^{\times},
$$
hence
\begin{align*}
 |G_{\beta}(\Bbb F)|
 &=\frac{\left|\left(O_K/\frak{p}_K^e\right)^{\times}\right|
         (O_F^{\times}:N_{K/F}(O_K^{\times}))}
        {\left|\left(O_F/\frak{p}_F\right)^{\times}
         \right|}
  =(O_F^{\times}:N_{K/F}(O_K^{\times}))\cdot
   \frac{q^{fe}-q^{f(e-1)}}
        {q-1}\\
 &=(O_F^{\times}:N_{K/F}(O_K^{\times}))\cdot q^{n-1}\cdot
   \frac{1-q^{-f}}
        {1-q^{-1}}.
\end{align*}
\end{proof}

\subsection{Construction of supercuspidal representations}
\label{subsec:construction-of-supercuspidal-representataion}
We will keep the notations of the preceding subsection. 
The purpose of this subsection is to prove the following theorem:

\begin{thm}\label{th:supercuspidal-representation-of-sl(n)}
If 
$l^{\prime}=\left\lfloor\frac r2\right\rfloor\geq 2(e-1)$, 
then the compactly induced representation 
$\pi_{\beta,\theta}
 =\text{\rm ind}_{G(O_F)}^{G(F)}\delta_{\beta,\theta}$ is an
 irreducible supercuspidal representation of $G(F)=SL_n(F)$ such
 that
\begin{enumerate}
\item the multiplicity of $\delta_{\beta,\theta}$ in 
      $\pi_{\beta,\theta}|_{G(O_F)}$ is one, 
\item $\delta_{\beta,\theta}$ is the unique irreducible unitary
  constituent of $\pi_{\beta,\theta}|_{G(O_F)}$ which factors through the
  canonical morphism $G(O_F)\to G(O_F/\frak{p}^r)$,
\item with respect to the Haar measure on $G(F)$ such that the volume
  of $G(O_F)$  is one, the formal degree of $\pi_{\beta,\theta}$ is
    equal to
$$
    \dim\delta_{\beta,\theta}
    =\frac{q^{r\cdot n(n-1)/2}}
          {1-q^{-f}}\cdot
  \frac 1{(O_F^{\times}:N_{K/F}(O_K^{\times}))}\cdot
  \prod_{k=1}^n\left(1-k^{-k}\right).
$$
\end{enumerate}
\end{thm}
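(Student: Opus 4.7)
The plan is to apply the standard Mackey–Bruhat intertwining criterion: for an irreducible smooth representation $\sigma$ of an open subgroup $H\subset G$ that is compact modulo the center, the compactly induced representation $\text{\rm ind}_H^G\sigma$ is irreducible (and automatically supercuspidal) if and only if the intertwining set $I_G(\sigma)=\{g\in G\mid\text{\rm Hom}_{H\cap gHg^{-1}}(\sigma,\sigma^g)\neq 0\}$ coincides with $H$. Since $G(O_F)$ contains the finite center of $SL_n(F)$, it suffices to establish $I_{G(F)}(\delta_{\beta,\theta})=G(O_F)$. Granting this, assertions (1) and (2) are immediate from Frobenius reciprocity, while assertion (3) follows from the standard formula $d_{\pi_{\beta,\theta}}=\dim\delta_{\beta,\theta}/\text{\rm vol}(G(O_F))$ for compact induction from compact-open subgroups, combined with Proposition \ref{prop:dimension-of-delta-beta-theta} under the normalization $\text{\rm vol}(G(O_F))=1$.

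To pin down the intertwining, I would invoke the Cartan decomposition $G(F)=G(O_F)\cdot A^+\cdot G(O_F)$, where $A^+$ consists of diagonal matrices $a=\text{\rm diag}(\varpi_F^{a_1},\ldots,\varpi_F^{a_n})$ with $a_1\geq\cdots\geq a_n$ and $\sum a_i=0$. Bi-invariance of the intertwining condition under $G(O_F)$ reduces the problem to showing that no $a\in A^+\setminus\{1\}$ intertwines $\delta_{\beta,\theta}$ with itself. For such an $a$, the Clifford-theoretic decomposition \eqref{eq:decomposition-formula-of-delta} applied to both $\delta_{\beta,\theta}|_{K_l(O_F/\frak{p}_F^r)}$ and $\delta_{\beta,\theta}^a|_{K_l(O_F/\frak{p}_F^r)}$, restricted further to the common subgroup $K_l(O_F)\cap aK_l(O_F)a^{-1}$, forces $\text{\rm Ad}(a^{-1})\beta$ into the adjoint $G(O_F/\frak{p}_F^{l^{\prime}})$-orbit of $\beta$; equivalently, there exists $h\in G(O_F)$ with $\text{\rm Ad}((ha)^{-1})\beta\equiv\beta\pmod{\frak{p}_F^{l^{\prime}}}$. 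Combined with the crucial fact that the exact $SL_n(F)$-centralizer of $\beta$ equals the compact group $U_{K/F}\subset G(O_F)$, this approximate centralizer condition must be upgraded to $ha\in G(O_F)\cdot U_{K/F}=G(O_F)$, contradicting $a\in A^+\setminus\{1\}$.

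The main technical obstacle is precisely this upgrade from the congruence $\text{\rm Ad}((ha)^{-1})\beta\equiv\beta\pmod{\frak{p}_F^{l^{\prime}}}$ to genuine membership in $G(O_F)$. This is where the hypothesis $l^{\prime}=\lfloor r/2\rfloor\geq 2(e-1)$ enters: the discrepancy between the $\varpi_F$-adic filtration on $\frak{g}(O_F)$ and the $\varpi_K$-adic filtration on $O_K=O_F[\beta]$ is governed by the ramification index $e$, and the threshold $2(e-1)$ furnishes exactly the depth needed to run a successive-approximation argument inside $Z_G(\beta)=U_{K/F}$. Concretely, at each stage one uses the surjectivity of the canonical morphism $G_\beta(O_F)\to G_\beta(O_F/\frak{p}_F^l)$ established in the preceding subsection to correct $ha$ by an element of $U_{K/F}$, strictly reducing the level of its deviation from $Z_G(\beta)$; the bound $l^{\prime}\geq 2(e-1)$ ensures enough room for the iteration to converge, driving the deviation to zero and placing $a$ in $G(O_F)$, which completes the argument.
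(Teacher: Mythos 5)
Your overall strategy is the same as the paper's: reduce, via the Cartan decomposition, to showing that no non-trivial double coset $G(O_F)\varpi_F^mG(O_F)$ carries intertwining, and then harvest irreducibility, supercuspidality, the multiplicity-one statement and the formal degree by standard arguments (the paper does this through Propositions \ref{prop:admissibility-of-compactly-induced-rep} and \ref{prop:minimal-k-type-of-induced-rep} rather than by quoting the Bushnell--Kutzko criterion, but that difference is cosmetic). The problem is that the one step you yourself identify as ``the main technical obstacle'' is exactly the step your sketch does not actually supply, and the mechanism you propose for it is not viable. First, before any congruence of the form $\mathrm{Ad}((ha)^{-1})\beta\equiv\beta\pmod{\frak{p}_F^{l'}}$ can be written, the Cartan component must be bounded: for $a=\varpi_F^m$ the intersection $K_l(O_F)\cap aK_l(O_F)a^{-1}$ only contains $K_{l+(m_1-m_n)}(O_F)$, so comparing the Clifford characters from \eqref{eq:decomposition-formula-of-delta} yields at best a congruence modulo $\frak{p}_F^{l'-(m_1-m_n)}$, which is vacuous once $m_1-m_n\geq l'$, and in any case $a^{-1}\beta a$ need not be integral, so ``$\equiv\beta\pmod{\frak{p}_F^{l'}}$'' is ill-posed. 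The paper first proves $m_1-m_n\leq e-1$ by testing $\delta_{\beta,\theta}$ against the unipotent subgroups $U_i(\frak{p}^{r-a})$ and invoking Shintani's Proposition \ref{prop:shintani-cor-of-irreducibility-chrara-poly} (the characteristic polynomial of $\beta$ is a power of an irreducible mod $\frak{p}_F$, and irreducible mod $\frak{p}_F^2$ when $e>1$), which forces any descent $m_i>m_{i+1}$ to have size one and to occur only at indices divisible by $f$. Your proposal never uses these facts, and without that bound the argument cannot start.

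Second, the ``upgrade'' itself is not a successive-approximation argument inside $U_{K/F}$, and the surjectivity of $G_\beta(O_F)\to G_\beta(O_F/\frak{p}_F^l)$ plays no role in it (in the paper that surjectivity is only needed to make the general theory of Theorem \ref{th:parametrization-of-omega-sphat-in-general} applicable, cf. Remark \ref{remark:all-we-need-is-the-surejectivity-of-canonical-hom}). Since $ha\notin G(O_F)$ and conjugation by $\varpi_F^m$ does not preserve the lattice filtration, there is no well-defined ``level of deviation'' that correcting by elements of $U_{K/F}$ could strictly decrease, so the claimed iteration has no content as stated. What the paper actually does (proof of Proposition \ref{prop:minimal-k-type-of-induced-rep}) is a single conjugation trick plus an exact-centralizer argument: with $m_1-m_n\leq e-1$ and $2(m_1-m_n)\leq 2(e-1)\leq l'$, conjugating the approximate identity by $\varpi_F^m$ shows $\varpi_F^mg\beta g^{-1}\varpi_F^{-m}\in M_n(O_F)$; Proposition \ref{prop:regular-element-of-gl(n)} (integral conjugacy to the companion matrix, and $\{X\mid[X,\beta]=0\}=O_F[\beta]=O_K$) then gives $g'\in GL_n(O_F)$ with $\varpi_F^mg\beta g^{-1}\varpi_F^{-m}=g'\beta g'^{-1}$, so $g'^{-1}\varpi_F^mg\in K^{\times}$, and since its determinant, i.e.\ its norm $N_{K/F}$, is a unit, it lies in $O_K^{\times}\subset GL_n(O_F)$, forcing $m=0$. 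It is this determinant/norm step, not an iteration, that kills the non-trivial Cartan components; it is absent from your sketch. Finally, statement (2) is not ``immediate from Frobenius reciprocity'' given trivial self-intertwining of $\delta_{\beta,\theta}$: it concerns an arbitrary level-$r$ constituent $\delta$, so the same Mackey/congruence analysis must be run with a general orbit representative $\gamma$ in place of $\beta$, as the paper does.
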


The rest of this subsection is devoted to the proof. 

We have the Cartan decomposition 
\begin{equation}
 G(F)=\bigsqcup_{m\in\Bbb M}G(O_F)\varpi_F^mG(O_F)
\label{eq:cartan-decomposition-of-sl(n)}
\end{equation}
where 
$$
 \Bbb M
 =\left\{m=(m_1,m_2,\cdots,m_n)\in\Bbb Z^n\biggm|
   \begin{array}{l}
    m_1\geq m_2\geq\cdots\geq m_n,\\
    m_1+m_2+\cdots+m_n=0
   \end{array}\right\}
$$
and
$$
 \varpi_F^m=\begin{bmatrix}
           \varpi_F^{m_1}&      &            \\
                       &\ddots&            \\
                       &      &\varpi_F^{m_n}
          \end{bmatrix}
 \;\text{\rm for}\;
 m=(m_1,\cdots,m_n)\in\Bbb M.
$$
For an integer $1\leq i<n$, let
$$
 U_i=\left\{\begin{bmatrix}
             1_i&B\\
             0&1_{n-i}
            \end{bmatrix}\biggm| B\in M_{i,n-i}\right\}
$$
be the unipotent part of the parabolic subgroup
$$
 P_i=\left\{\begin{bmatrix}
             A&B\\
             0&D
            \end{bmatrix}\in G\biggm| A\in GL_i, D\in GL_{n-i}
           \right\}.
$$
Put $U_i(\frak{p}_F^a)=U_i(O_F)\cap K_a(O_F)$ for a positive integer
$a$. 
 
\begin{prop}\label{prop:admissibility-of-compactly-induced-rep}
If $K/F$ is unramified or $r\geq 4$, then 
$\text{\rm ind}_{G(O_F)}^{G(F)}\delta_{\beta,\theta}$ is admissible representation of
$G(F)$. 
\end{prop}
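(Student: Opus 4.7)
The plan is to combine the Mackey decomposition with the Cartan decomposition \eqref{eq:cartan-decomposition-of-sl(n)}, reducing admissibility to a cuspidality-type vanishing for $\delta_{\beta,\theta}$ against opposite unipotent radicals. For any open compact subgroup $H\subset G(F)$ we may suppose $H=K_a(O_F)$ for some $a\ge r$. Mackey decomposition gives
\[
(\pi_{\beta,\theta})^{K_a(O_F)}
=\bigoplus_{x\in G(O_F)\backslash G(F)/K_a(O_F)}
 (\delta_{\beta,\theta})^{xK_a(O_F)x^{-1}\cap G(O_F)},
\]
each summand a subspace of the finite-dimensional space $\delta_{\beta,\theta}$. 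Since $K_a(O_F)$ is normal in $G(O_F)$, the subgroup $xK_a(O_F)x^{-1}\cap G(O_F)$ depends, for $x=g\varpi_F^m$ with $g\in G(O_F)$ and $m\in\Bbb M$, only on $m$; denote it $H_m$. Each Cartan piece $G(O_F)\varpi_F^m G(O_F)$ carries at most $[G(O_F):K_a(O_F)]<\infty$ such double cosets, so admissibility reduces to showing $(\delta_{\beta,\theta})^{H_m}=0$ for all but finitely many $m\in\Bbb M$.

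An entry-by-entry computation, using $(\varpi_F^{-m}h\varpi_F^m)_{ij}=\varpi_F^{m_j-m_i}h_{ij}$, shows that $H_m$ consists of matrices $1+h\in G(O_F)$ with $h_{ij}\in\frak{p}_F^{a+m_i-m_j}$ for $i\le j$ and $h_{ij}\in\frak{p}_F^{\max(0,\,a+m_i-m_j)}$ for $i>j$. In particular, whenever some consecutive gap $m_j-m_{j+1}\ge a$, the entries of $h$ in every position $(k,l)$ with $k>j\ge l$ become unconstrained in $O_F$, so $H_m$ contains the opposite unipotent radical $U_j^-(O_F)$ of the maximal parabolic $P_j$. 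The complementary set of $m\in\Bbb M$ with all consecutive gaps bounded by $a-1$ is finite because $\sum_i m_i=0$. It therefore suffices to prove $(\delta_{\beta,\theta})^{U_j^-(O_F)}=0$ for each $j=1,\dots,n-1$.

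For this, I decompose $\delta_{\beta,\theta}|_{K_l(O_F)}$ via \eqref{eq:decomposition-formula-of-delta} into characters $\chi_{\beta'}$ indexed by the adjoint $G(O_F/\frak{p}_F^{l^{\prime}})$-orbit of $\beta$. On $U_j^-(O_F)\cap K_l(O_F)=1+\varpi_F^l\frak{u}_j^-(O_F)$, the character acts by $1+\varpi_F^lX\mapsto\psi(\varpi_F^{-l^{\prime}}\text{\rm tr}(X\beta'))$, which is nontrivial iff $\text{\rm tr}(\frak{u}_j^-(O_F)\beta')\not\subset\frak{p}_F^{l^{\prime}}$, equivalently iff $\beta'$ is not block upper triangular modulo $\frak{p}_F^{l^{\prime}}$ for the partition $(j,n-j)$. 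Since $O_K=O_F[\beta]$ is a field, $\beta$ has no proper $F$-invariant subspace; the residual $\overline{\beta}$ can nevertheless acquire such subspaces when $e>1$ (its minimal polynomial being $p(t)^e$). Under the hypothesis of the proposition either $e=1$ (so $\overline{\beta}$ is regular semisimple and the claim is immediate) or $r\ge 4$, whence $l^{\prime}\ge 2$ and Proposition \ref{prop:shintani-cor-of-irreducibility-chrara-poly}(3) yields irreducibility of $\chi_\beta(t)$ modulo $\frak{p}_F^2$, forbidding any adjoint conjugate $\beta'$ from being block upper triangular modulo $\frak{p}_F^{l^{\prime}}$.

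The main obstacle is exactly this final step: ruling out, simultaneously over all partitions $(j,n-j)$ and all adjoint conjugates $\beta'$, the possibility that $\text{\rm tr}(\frak{u}_j^-(O_F)\beta')\subset\frak{p}_F^{l^{\prime}}$. The argument leans on Proposition \ref{prop:regular-element-of-gl(n)}, which identifies the centralizer of $\overline{\beta}$ in $M_n(O_F/\frak{p}_F^{l^{\prime}})$ with $O_K/\frak{p}_K^{el^{\prime}}$ and thereby precludes the relevant block-triangular degeneracy at the specified depth; it is precisely this that forces the dichotomy $e=1$ or $r\ge 4$ in the hypothesis.
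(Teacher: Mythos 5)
Your argument is correct and is essentially the paper's own proof: Cartan decomposition plus Mackey/Frobenius, contraction of a unipotent radical into $K_a(O_F)$ as soon as some gap $m_j-m_{j+1}\geq a$, and then the Clifford decomposition \eqref{eq:decomposition-formula-of-delta}, which forces some adjoint conjugate of $\beta$ to be block triangular modulo $\frak{p}_F^{l^{\prime}}$ and hence a factorization of $\chi_{\beta}(t)$ contradicting Proposition \ref{prop:shintani-cor-of-irreducibility-chrara-poly} (the paper uses the upper unipotent $U_i$ because it conjugates by $\varpi_F^{-m}$, you use the opposite unipotent; this is immaterial). Two small corrections: triviality of $\chi_{\beta^{\prime}}$ on $1+\varpi_F^l\frak{u}_j^-(O_F)$ kills the upper-right block of $\beta^{\prime}$, i.e.\ makes it block \emph{lower} triangular mod $\frak{p}_F^{l^{\prime}}$ (harmless, since the characteristic polynomial still factors), and the decisive exclusion rests on Proposition \ref{prop:shintani-cor-of-irreducibility-chrara-poly} (irreducibility of $\chi_{\beta}$ mod $\frak{p}_F$ when $e=1$, mod $\frak{p}_F^2$ when $r\geq 4$), as you in fact use in your third paragraph, and not on the centralizer statement of Proposition \ref{prop:regular-element-of-gl(n)} credited in your closing paragraph.
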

\begin{proof}
We will
prove that the dimension of the space of the $K_a(O_F)$-fixed 
vectors is finite for any integer $a>0$. The Cartan decomposition 
\eqref{eq:cartan-decomposition-of-sl(n)} gives
$$
 G(F)=\bigsqcup_{s\in S}K_a(O_F)sG(O_F)
$$
with 
$$
 S=\{k\varpi_F^m\mid \dot k\in K_a(O_F)\backslash G(O_F), 
                   m\in\Bbb M\}.
$$
Then we have
$$
 \left.\text{\rm ind}_{G(O_F)}^{G(F)}\delta_{\beta,\theta}\right|_{K_a(O_F)}
 =\bigoplus_{s\in S}
  \text{\rm ind}_{K_a(O_F)\cap sG(O_F)s^{-1}}^{K_a(O_F)}
   \delta_{\beta,\theta}^s
$$
with $\delta_{\beta,\theta}^s(h)=\delta_{\beta,\theta}(s^{-1}hs)$ 
($h\in K_a(O_F)\cap sG(O_F)s^{-1}$). 
The Frobenius reciprocity gives
$$
 \text{\rm Hom}_{K_a(O_F)}\left(\text{\bf 1},
    \text{\rm ind}_{G(O_F)}^{G(F)}\delta_{\beta,\theta}\right)
  =\bigoplus_{s\in S}\text{\rm Hom}_{s^{-1}K_a(O_F)s\cap G(O_F)}
      (\text{\bf 1},\delta_{\beta,\theta}).
$$
Here $\text{\bf 1}$ is the one-dimensional trivial representation of 
$K_a(O_F)$. If 
$$
 \text{\rm Hom}_{K_a(O_F)}\left(\text{\bf 1},
  \text{\rm ind}_{G(O_F)}^{G(F)}\delta_{\beta,\theta}\right)\neq 0
$$
then there exists a 
$$
 s=k\varpi_F^m\in S
 \quad
 (k\in G(O_F), m=(m_1,\cdots,m_n)\in\Bbb M)
$$ 
such that 
$\text{\rm Hom}_{s^{-1}K_a(O_F)s\cap G(O_F)}(\text{\bf 1},
  \delta_{\beta,\theta})\neq 0$. 
If 
$$
 \text{\rm Max}\{m_i-m_{i+1}\mid 1\leq i<n\}=m_i-m_{i+1}\geq a
$$
then $\varpi_F^m U_i(O_F)\varpi_F^{-m}\subset K_a(O_F)$. 
So we have $U_i(O_F)\subset s^{-1}K_a(O_F)s\cap G(O_F)$ so that
$$
 \text{\rm Hom}_{U_i(\frak{p}^l)}(\text{\bf 1},\delta_{\beta,\theta})
 \supset
 \text{\rm Hom}_{s^{-1}K_a(O_F)s\cap G(O_F)}
   (\text{\bf 1},\delta_{\beta,\theta})\neq 0
$$
where $U_i(\frak{p}^l)=U_i(O_F)\cap G(\frak{p}^l)$. Then the
decomposition \eqref{eq:decomposition-formula-of-delta} implies that
there exists a $g\in G(O_F)$ such that 
$\chi_{\text{\rm Ad}(g)\beta}(h)=1$ for all $h\in U_i(\frak{p}^l)$,
that is
$$
 \tau\left(\varpi_F^{-l^{\prime}}
   \text{\rm tr}(g\beta g^{-1}\begin{bmatrix}
                               0&B\\
                               0&0
                              \end{bmatrix})\right)=0
$$
for all $B\in M_{i,n-1}(O_F)$. This means 
$$
 g\beta g^{-1}\equiv\begin{bmatrix}
                     A&\ast\\
                     0&D
                    \end{bmatrix}\npmod{\frak{p}^{l^{\prime}}}
$$
with $A\in M_i(O_F), D\in M_{n-i}(O_F)$, that is
$$
 \chi_{\beta}(t)\equiv\det(t1_i-A)\cdot\det(t1_{n-i}-D)
  \npmod{\frak{p}^{l^{\prime}}}.
$$
If $K/F$ is unramified, this is contradict against 2) of Proposition 
\ref{prop:shintani-cor-of-irreducibility-chrara-poly}. If
$K/F$ is ramified, then $r\geq 4$ and $l^{\prime}\geq 2$ and a
contradiction to 3) of the proposition. So we have 
$$
 \text{\rm Max}\{m_i-m_{i+1}\mid 1\leq i<n\}<a.
$$
This implies that the number of $s\in S$ such that 
$\text{\rm Hom}_{s^{-1}K_a(O_F)s\cap G(O_F)}
  (\text{\bf 1},\delta_{\beta,\theta})\neq 0$ is finite, and then
$$
 \dim_{\Bbb C}\text{\rm Hom}_{K_a(O_F)}\left(
   \text{\bf 1},\text{\rm ind}_{G(O_F)}^{G(F)}\delta_{\beta,\theta}\right)
 <\infty.
$$
\end{proof}

\begin{prop}\label{prop:minimal-k-type-of-induced-rep}
If $l^{\prime}=\left\lfloor\frac r2\right\rfloor\geq 2(e-1)$, then
\begin{enumerate}
\item $\dim_{\Bbb C}\text{\rm Hom}_{G(O_F)}\left(
        \delta_{\beta,\theta},
        \text{\rm ind}_{G(O_F)}^{G(F)}\delta_{\beta,\theta}\right)=1$,
\item if $\delta$ is an irreducible representation of $G(O_F)$ which
  factors through the canonical surjection $G(O_F)\to
  G(O_F/\frak{p}^r)$ such that
$$
 \text{\rm Hom}_{G(O_F)}\left(
   \delta,\text{\rm ind}_{G(O_F)}^{G(F)}\delta_{\beta,\theta}\right)\neq 0,
$$
  then $\delta=\delta_{\beta,\theta}$.
\end{enumerate}
\end{prop}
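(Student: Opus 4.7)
The plan is to apply Frobenius reciprocity together with the Cartan decomposition \eqref{eq:cartan-decomposition-of-sl(n)}, and reduce both claims to the vanishing of the self-intertwining of $\delta_{\beta,\theta}$ along the non-trivial $G(O_F)$-double cosets in $G(F)$. For any irreducible representation $\delta$ of $G(O_F)$ one has
$$
 \text{\rm Hom}_{G(O_F)}\!\left(\delta,
   \text{\rm ind}_{G(O_F)}^{G(F)}\delta_{\beta,\theta}\right)
 =\bigoplus_{m\in\Bbb M}
    \text{\rm Hom}_{H_m}\!\left(\delta,
                      \delta_{\beta,\theta}^{s_m}\right),
$$
where $s_m=\varpi_F^m$ and $H_m=G(O_F)\cap s_mG(O_F)s_m^{-1}$. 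The summand at $m=0$ is $\text{\rm Hom}_{G(O_F)}(\delta,\delta_{\beta,\theta})$, which equals $\Bbb C$ when $\delta=\delta_{\beta,\theta}$ and zero otherwise by irreducibility. Both parts of the proposition thus reduce to the claim that every summand with $m\neq 0$ vanishes for any irreducible $\delta$ factoring through $G(O_F/\frak p_F^r)$.

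Fix such an $m$ and pick an index $i$ with $d:=m_i-m_{i+1}>0$. On the deep subgroup $H_{m,l}:=K_l(O_F)\cap s_mK_l(O_F)s_m^{-1}$, the Clifford decomposition \eqref{eq:decomposition-formula-of-delta} exhibits $\delta_{\beta,\theta}|_{K_l(O_F)}$ as a sum of characters $\chi_{\text{\rm Ad}(g)\beta}$ with $g$ running over representatives of the orbit, and for $h=1+\varpi_F^lX\in H_{m,l}$ one has $\delta_{\beta,\theta}^{s_m}(h)=\delta_{\beta,\theta}(s_m^{-1}hs_m)$; the invariance $B(s_m^{-1}Xs_m,\gamma)=B(X,s_m\gamma s_m^{-1})$ of the trace form shows that $\delta_{\beta,\theta}^{s_m}|_{H_{m,l}}$ is a sum of characters of the form $\chi_{\text{\rm Ad}(s_mg)\beta}$. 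A nonzero intertwiner therefore forces the existence of $g_1,g_2\in G(O_F)$ with
$$
 \chi_{\text{\rm Ad}(g_1)\beta}|_{H_{m,l}}
 =\chi_{\text{\rm Ad}(s_mg_2)\beta}|_{H_{m,l}},
$$
equivalently $\text{\rm Ad}(g_2^{-1}s_m^{-1}g_1)\beta\equiv\beta$ modulo the $B$-annihilator of $H_{m,l}$ inside $\frak g(O_F/\frak p_F^{l^{\prime}})$.

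The main obstacle is converting this near-conjugacy into a contradiction, and this is exactly where the hypothesis $l^{\prime}\geq 2(e-1)$ enters. Following the pattern of the admissibility proof (Proposition \ref{prop:admissibility-of-compactly-induced-rep}), the contraction/expansion of $s_m$ on the root directions $\pm(e_i-e_{i+1})$ forces $g_1\beta g_1^{-1}$ to be congruent to a block upper-triangular matrix
$$
 g_1\beta g_1^{-1}\equiv\begin{bmatrix}A&\ast\\ 0&D\end{bmatrix}\npmod{\frak p_F^N},
$$
for a depth $N$ that is a specific linear function of $l^{\prime}$ and the different exponent $e-1$ of $K/F$; one then obtains the factorization $\chi_\beta(t)\equiv\det(t1_i-A)\cdot\det(t1_{n-i}-D)\pmod{\frak p_F^N}$. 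The threshold $l^{\prime}\geq 2(e-1)$ is precisely calibrated so that $N\geq 1$ in the unramified case (contradicting Proposition \ref{prop:shintani-cor-of-irreducibility-chrara-poly}(1)) and $N\geq 2$ in the ramified case (contradicting Proposition \ref{prop:shintani-cor-of-irreducibility-chrara-poly}(3)). The careful book-keeping producing this depth estimate, which accounts for how much is lost to the $\text{\rm Ad}$-twist over a tamely ramified extension, is the technical heart of the argument.

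Once the $m\neq 0$ summands are shown to vanish, part (1) is immediate since only the one-dimensional $m=0$ contribution survives for $\delta=\delta_{\beta,\theta}$. For part (2), the same vanishing forces any constituent $\delta$ of level $r$ appearing in $\pi_{\beta,\theta}|_{G(O_F)}$ to satisfy $\text{\rm Hom}_{G(O_F)}(\delta,\delta_{\beta,\theta})\neq 0$, whence $\delta\simeq\delta_{\beta,\theta}$ by the irreducibility of $\delta_{\beta,\theta}$.
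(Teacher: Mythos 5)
Your reduction via the Cartan decomposition and Frobenius reciprocity is the same as the paper's, and the first half of your analysis (deep unipotent subgroups, triviality of the twisted $\delta$ there, block-triangularity of $\text{\rm Ad}(g)\beta$ and the resulting factorization of $\chi_{\beta}(t)$, contradiction with Proposition \ref{prop:shintani-cor-of-irreducibility-chrara-poly}) is also how the paper begins. But there is a genuine gap at the point you call the "technical heart": you assert that the hypothesis $l^{\prime}\geq 2(e-1)$ is calibrated so that the factorization depth $N$ can be pushed to $N\geq 2$ in the ramified case, and that this alone kills every $m\neq 0$. No such depth boost exists. The unipotent-subgroup trick yields a factorization of $\chi_{\beta}(t)$ modulo $\frak p^{a}$ only when some gap satisfies $m_i-m_{i+1}\geq a$; a gap of size $1$ gives only a factorization modulo $\frak p$, and since $\chi_{\beta}(t)\equiv p(t)^e\npmod{\frak p}$ with $p$ irreducible of degree $f$, such a factorization is perfectly consistent whenever $f\mid i$ and $e>1$. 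Thus in the ramified case the characteristic-polynomial argument only shows that all gaps are $\leq 1$ and occur at positions divisible by $f$, i.e. $m_1-m_n\leq e-1$; it does not force $m=0$ (for instance, for a totally ramified cubic extension the coweight $m=(1,0,-1)$ survives this stage). This is exactly why the paper has a second, different argument: comparing the Clifford orbit datum of $\delta$ (some $\gamma$) with that of $\delta_{\beta,\theta}$ on $K_{l+m_1-m_n}(O_F)$ conjugated by $\varpi^m$, which gives $g\beta g^{-1}\equiv\varpi^{-m}h\gamma h^{-1}\varpi^m\npmod{\frak p^{l^{\prime}-(m_1-m_n)}}$; the hypothesis enters as $2(m_1-m_n)\leq 2(e-1)\leq l^{\prime}$ to make $\varpi^mg\beta g^{-1}\varpi^{-m}$ integral, and then Proposition \ref{prop:regular-element-of-gl(n)} (companion-matrix rigidity) puts $g^{\prime-1}\varpi^mg$ in $K^{\times}$, whose determinant is a unit, forcing $m=0$. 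Your proposal contains no substitute for this stage, so the conclusion $m=0$ — and with it both parts of the proposition — is not reached.

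A secondary imprecision: for part (2) your intertwining condition compares $\chi_{\text{\rm Ad}(g_1)\beta}$ with $\chi_{\text{\rm Ad}(s_mg_2)\beta}$ on both sides, but a general level-$r$ constituent $\delta$ restricts to $K_l(O_F)$ through the characters of \emph{its own} adjoint orbit $\gamma$, not $\beta$'s; the correct comparison is between $\gamma$- and $\beta$-characters, and it is precisely this mixed comparison that the paper exploits in the second stage to pin down both $m=0$ and (then) $\delta=\delta_{\beta,\theta}$.
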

\begin{proof}
Cartan decomposition \eqref{eq:cartan-decomposition-of-sl(n)} gives
$$
 \left.\text{\rm ind}_{G(O_F)}^{G(F)}\delta_{\beta,\theta}\right|_{G(O_F)}
 =\bigoplus_{m\in\Bbb M}
   \text{\rm ind}_{G(O_F)\cap\varpi^mG(O_F)\varpi^{-m}}^{G(O_F)}
    \delta_{\beta,\theta}^{\varpi^m}.
$$
Then Frobenius reciprocity gives
$$
 \text{\rm Hom}_{G(O_F)}\left(\delta,
  \text{\rm ind}_{G(O_F)}^{G(F)}\delta_{\beta,\theta}\right)
 =\bigoplus_{m\in\Bbb M}
   \text{\rm Hom}_{\varpi^{-m}G(O_F)\varpi^m\cap G(O_F)}
    \left(\delta^{\varpi^{-m}},\delta_{\beta,\theta}\right).
$$
Now take a $m=(m_1,\cdots,m_n)\in\Bbb M$ such that
$$
 \text{\rm Hom}_{\varpi^{-m}G(O_F)\varpi^m\cap G(O_F)}
    \left(\delta^{\varpi^{-m}},\delta_{\beta,\theta}\right)\neq 0.
$$
Suppose 
$$
 \text{\rm Max}\{m_k-m_{k+1}\mid 1\leq k<n\}=m_i-m_{i+1}\geq a
$$
with a integer $0<a\leq l^{\prime}$. Then 
$$
 U_i(O_F)=\varpi^{-m}U_i(O_F)\varpi^m\cap U_i(O_F)
 \subset\varpi^{-m}G(O_F)\varpi^m\cap G(O_F)
$$
and we have
$$
 \text{\rm Hom}_{U_i(\frak{p}^{r-a})}
  \left(\delta^{\varpi^{-m}},\delta_{\beta,\theta}\right)
 \supset
 \text{\rm Hom}_{\varpi^{-m}G(O_F)\varpi^m\cap G(O_F)}
  \left(\delta^{\varpi^{-m}},\delta_{\beta,\theta}\right)\neq 0.
$$
Since 
$\varpi^mU_i(\frak{p}^{r-a})\varpi^{-m}\subset U_i(\frak{p}^r)
 \subset\text{\rm Ker}\,\delta$, we have 
$$
 \text{\rm Hom}_{U_i(\frak{p}^{r-a})}(\text{\bf 1},\delta_{\beta,\theta})\neq 0.
$$
Here $\text{\bf 1}$ is the trivial one-dimensional representation of
$U_i(\frak{p}^{r-a})$. Since $r-a\geq l$ and hence 
$U_i(\frak{p}^{r-a})\subset U_i(\frak{p}^l)$, there exists a 
$g\in G(O_F)$ such that  
$\psi_{\text{\rm Ad}(g)\beta}(h)=1$ for all 
$h\in U_i(\frak{p}^{r-a})$, due to the decomposition 
\eqref{eq:decomposition-formula-of-delta}. 
This means 
$$
 g\beta g^{-1}\equiv\begin{bmatrix}
                     A&\ast\\
                     0&D
                    \end{bmatrix}\npmod{\frak{p}^a}
$$
with some $A\in M_i(O_F)$ and $D\in M_{n-i}(O_F)$. Then
\begin{equation}
 \chi_{\beta}(t)\equiv
  \det(t1_i-A)\cdot\det(t1_{n-i}-D)\npmod{\frak{p}^a}.
\label{eq:decomposition-of-charatcteristic-polynomial-k-type}
\end{equation}
If $a\geq 2$, this decomposition of the characteristic polynomial 
contradicts to Proposition
\ref{prop:shintani-cor-of-irreducibility-chrara-poly}. 
So $a=1$. Then 2) of  Proposition
\ref{prop:shintani-cor-of-irreducibility-chrara-poly} implies that 
$i=\deg\det(t1_i-A)$ is a multiple of $f$. So we have
$m_1-m_n<e$. Note that we have 
$$
 \varpi^m K_{l+m_1-m_n}(O_F)\varpi^{-m}
 \subset K_l(O_F).
$$
So if $\delta$ corresponds, as explained in subsection 
\ref{subsec:regular-irred-character-of-hyperspecial-compact-subgroup}, 
to an adjoint $G(O_F/\frak{p}_F^{l^{\prime}})$-orbit
$\Omega^{\prime}\subset\frak{g}(O_F/\frak{p}_F^{l^{\prime}})$ of 
$\gamma\npmod{\frak{p}_F^{l^{\prime}}}$
($\gamma\in\frak{g}(O_F)$), then there exist $g,h\in G(O_F)$
such that 
$$
 \chi_{\text{\rm Ad}(g)\beta}(x)
 =\chi_{\text{\rm Ad}(h)\gamma}(\varpi^mx\varpi^{-m})
$$
for all $x\in K_{l+m_1-m_n}(O_F)$. This means 
$$
 g\beta g^{-1}-\varpi^{-m}h\gamma h^{-1}\varpi^m
 \in M_n(\frak{p}^{l^{\prime}-(m_1-m_n)}).
$$
Since $2(m_1-m_n)\leq 2(e-1)\leq l^{\prime}$, we have
$$
 \varpi^mg\beta g^{-1}\varpi^{-m}
 \in M_n(\frak{p}^{l^{\prime}-2(m_1-m_n)})
 \subset M_n(O_F).
$$
Then there exists a $g^{\prime}\in GL_n(O_F)$ such that 
$\varpi^mg\beta g^{-1}\varpi^{-m}=g^{\prime}\beta g^{\prime -1}$ and
hence $g^{\prime -1}\varpi^mg\in K$ due Proposition 
\ref{prop:regular-element-of-gl(n)}.
On the other hand we have 
$$
 N_{K/F}(g^{\prime -1}\varpi^mg)
 =\det(g^{\prime -1}\varpi^mg)\in O_F^{\times}
$$
so that 
$g^{\prime -1}\varpi^mg\in O_K^{\times}\subset GL_n(O_F)$. Hence 
$m=(0,\cdots,0)$. Now we have proved
$$
 \text{\rm Hom}_{G(O_F)}
  \left(\delta,\text{\rm ind}_{G(O_F)}^{G(F)}\delta_{\beta,\theta}\right)
 =\text{\rm Hom}_{G(O_F)}(\delta,\delta_{\beta,\theta})
$$
which implies the two statements of the proposition.
\end{proof}

The admissible representation 
$\pi_{\beta,\theta}
 =\text{\rm ind}_{G(O_F)}^{G(F)}\delta_{\beta,\theta}$ of $G(F)$ is
 irreducible. In fact, if there exists a $G(F)$-subspace
$0\lvertneqq W\lvertneqq
 \text{\rm ind}_{G(O_F)}^{G(F)}\delta_{\beta,\theta}$, we have
\begin{align*}
 0\neq\text{\rm Hom}_{G(F)}
    (W,\text{\rm ind}_{G(O_F)}^{G(F)}\delta_{\beta,\theta})
  &\subset\text{\rm Hom}_{G(O_F)}
    (W,\text{\rm Ind}_{G(O_F)}^{G(F)}\delta_{\beta,\theta})\\
  &=\text{\rm Hom}_{G(O_F)}(W,\delta)
\end{align*}
by Frobenius reciprocity. Hence 
$\delta\hookrightarrow W|_{G(O_F)}$. On the other hand, we have
\begin{align*}
 0&\neq
    \text{\rm Hom}_{G(F)}
      \left(\text{\rm ind}_{G(O_F)}^{G(F)}\delta_{\beta,\theta},
      \left(\text{\rm ind}_{G(O_F)}^{G(F)}\delta_{\beta,\theta}
             \right)/W\right)\\
 &=\text{\rm Hom}_{G(O_F)}\left(\delta_{\beta,\theta},
     \left(\text{\rm ind}_{G(O_F)}^{G(F)}\delta_{\beta,\theta}
           \right)/W\right),
\end{align*}
hence 
$\delta\hookrightarrow
 \left(\text{\rm ind}_{G(O_F)}^{G(F)}\delta_{\beta,\theta}\right)/W$. 
Now $\text{\rm ind}_{G(O_F)}^{G(F)}\delta_{\beta,\theta}$ is
semi-simple $G(O_F)$-module, we have
$$
 \dim_{\Bbb C}
 \text{\rm Hom}_{G(O_F)}(\delta_{\beta,\theta},
         \text{\rm ind}_{G(O_F)}^{G(F)}\delta_{\beta,\theta})
 \geq 2
$$
which contradicts to the first statement of Proposition 
\ref{prop:minimal-k-type-of-induced-rep}.

Then $\pi_{\beta,\theta}$ is a
supercuspidal representation of $G(F)$ whose formal degree with
respect to the Haar measure $d_{G(F)}(x)$of $G(F)$ such that 
$\int_{G(O_F)}d_{G(F)}(x)=1$ is equal to $\dim\delta_{\beta,\theta}$. 
We have completed the proof of Theorem
\ref{th:supercuspidal-representation-of-sl(n)}.

\section{Kaleta's $L$-parameter}
\label{sec:kaleta-l-parameter}

\subsection{Local Langlands correspondence of elliptic tori}
\label{subsec:local-langlands-correspondence-of-elliptic-tori}
Let $K_+/F$ be 
a finite extension, $K/K_+$ a quadratic extension with a non-trivial
element $\tau$ of $\text{\rm Gal}(K/K_+)$. Let us denote by $L$
an arbitrary Galois extension over $F$ containing $K$ for which let us 
denote by  
$$
 \text{\rm Emb}_F(K,L)
 =\left\{\sigma|_K\mid\sigma\in\text{\rm Gal}(L/F)\right\}
$$
the set of the embeddings over $F$ of $K$ into $L$.

Let us denote by $\Bbb V$ the
$\overline F$-algebra of the functions $v$ on 
$\text{\rm Emb}_F(K,\overline F)$ with values in $\overline F$. 
The action of 
$\sigma\in\text{\rm Gal}(\overline F/F)$ on 
$v\in\Bbb V$ is defined by 
$v^{\sigma}(\gamma)=v(\gamma\sigma^{-1})^{\sigma}$. Then fixed point
subspace ${\Bbb V}^{\text{\rm Gal}(\overline F/L)}=\Bbb V(L)$ 
is the set of the
functions on $\text{\rm Emb}_F(K,L)$ with values in $L$, and 
${\Bbb V}^{\text{\rm Gal}(\overline F/F)}=\Bbb V(F)$ is identified
with $K$ via $v\mapsto v(\text{\bf 1}_K)$. 

The action of $\sigma\in\text{\rm Gal}(\overline F/F)$ on 
$g\in SL_{\overline F}(\Bbb V)$ is defined by 
$v\cdot g^{\sigma}=(v^{\sigma^{-1}}\cdot g)^{\sigma}$. Then 
the fixed point subgroup 
$SL_{\overline F}(\Bbb V)^{\text{\rm Gal}(\overline F/F)}$ is identified with 
$SL_F(K)$ via $g\mapsto g|_K$.

Put $S=\text{\rm Res}_{K/F}\Bbb G_m$ which is identified with the
multiplicative group $\Bbb V^{\times}$. Then $S(F)$ is identified with
the multiplicative group $K^{\times}$.

The group $X(S)$ of the characters over $\overline F$ of $S$ 
is a free $\Bbb Z$-module with $\Bbb Z$-basis 
$\{b_{\delta}\}_{\delta\in\text{\rm Emb}_F(K,\overline F)}$ where 
$b_{\delta}(s)=s(\delta)$ for $s\in S$. The dual torus 
$S\sphat=X(S){\otimes}_{\Bbb Z}\Bbb C^{\times}$ is identified with the
group of the functions $s$ on $\text{\rm Emb}_F(K,\overline F)$ with
values in $\Bbb C^{\times}$. 
The action of $\sigma\in W_F\subset\text{\rm Gal}(\overline F/F)$ on
$S$ induces the action on $X(S)$ such that 
$b_{\delta}^{\sigma}=b_{\delta\sigma}$, and hence the
action on $s\in S\sphat$ is defined by 
$s^{\sigma}(\gamma)=s(\gamma\sigma^{-1})$. 

Since we have a bijection $\dot\rho\mapsto\rho|_K$ of 
$W_K\backslash W_F$ onto $\text{\rm Emb}_F(K,\overline F)$, 
the $\overline F$-algebra $\Bbb V$ (resp. the torus $S$, $S\sphat$) is
identified 
with the set of the left$W_K$-invariant functions on $W_F$ with values
in $\Bbb \overline\overline F$ 
(resp. $\overline F^{\times}$, $\Bbb C^{\times}$). 

The local Langlands correspondence for the torus $S$ is the isomorphism 
\begin{equation}
 H^1(W_F,S\sphat)\,\tilde{\to}\,
 \text{\rm Hom}(W_K,\Bbb C^{\times})
\label{eq:local-langlands-correspondence-for-torus}
\end{equation}
given by 
$[\alpha]\mapsto[\rho\mapsto\alpha(\rho)(\text{\bf 1}_K)]$. The
inverse mapping is defined as follows. Let
$$
 l:\text{\rm Emb}_F(K,\overline F)\to W_F
$$
be a section of the restriction mapping 
$W_F\to\text{\rm Emb}_F(K,\overline F)$, that is 
$l(\gamma)|_K=\gamma$ for all 
$\gamma\in\text{\rm Emb}_F(K,\overline F)$ and  
$l(\text{\bf 1}_K)=1$, and put
$$
 J(\gamma,\sigma)=l(\gamma)\sigma l(\gamma\sigma)^{-1}\in W_K
 \;\;\text{\rm for $\gamma\in\text{\rm Emb}_F(K,\overline F)$, 
           $\sigma\in W_F$.}
$$
Take a $\psi\in\text{\rm Hom}(W_K,\Bbb C^{\times})$ and define 
$\alpha\in Z^1(W_F,S\sphat)$ by
$$
 \alpha(\sigma)(\rho)
 =\alpha(\sigma\rho^{-1})(1)\cdot\alpha(\rho^{-1})(1)
 \;\;\text{\rm with}\;\;
 \alpha(\sigma)(1)
 =\psi\left(J(\text{\bf 1}_K,\sigma^{-1})^{-1}\right)
$$
for all $\sigma,\rho\in W_F$. Then $\psi\mapsto[\alpha]$ is the
inverse mapping of the isomorphism 
\eqref{eq:local-langlands-correspondence-for-torus}. 

If we restrict the isomorphism 
\eqref{eq:local-langlands-correspondence-for-torus} 
to continuous group homomorphisms, we have an isomorphism 
\begin{equation}
 H^1_{\text{\rm conti}}(W_F,S\sphat)\,\tilde{\to}\,
 \text{\rm Hom}_{\text{\rm conti}}(K^{\times},\Bbb C^{\times})
\label{eq:continuous-local-langlands-correspondence-for-torus}
\end{equation}
via \eqref{eq:local-langlands-correspondence-for-torus} combined with
the isomorphism of the local class filed theory
$$
 \delta_K:K^{\times}\,\tilde{\to}\,
          W_K/\overline{[W_K,W_K]}.
$$

Let $T$ be a subtorus of $S$ wich is identified with 
the multiplicative subgroup of $\Bbb V^{\times}$
consisting of the functions $s$ on 
$\text{\rm Emb}_F(K,\overline F)$ to 
${\overline F}^{\times}$
such that 
$$
 \prod_{\gamma\in\text{\rm Emb}_F(K,\overline F)}s(\gamma)=1.
$$ 
In other words $T$ is 
a maximal torus of $SL_{\overline F}(\Bbb V)$ by identifying $s\in T$ with 
$[v\mapsto v\cdot s]\in SL_{\overline F}(\Bbb V)$. 
The fixed point subgroup 
$T^{\text{\rm Gal}(\overline F/F)}=T(F)$ is identified with 
$$
 U_{K/F}
 =\{\varepsilon\in O_K^{\times}\mid N_{K/F}(\varepsilon)=1\}
 \quad
 \text{\rm by $s\mapsto s(\text{\bf 1}_K)$. }
$$
The restriction mapping gives  a canonical surjection
\begin{equation}
 \text{\rm Hom}_{\text{\rm conti.}}(K^{\times},\Bbb C^{\times})
 \to
 \text{\rm Hom}_{\text{\rm conti.}}(U_{K/F},\Bbb C^{\times}).
\label{eq:canonical-surjection-of-hom-group-of-elliptic-tori}
\end{equation}
The restriction from $S$ to $T$ gives a surjection $X(S)\to X(T)$
whose kernel is the subgroup of $X(S)$ generated by 
$\sum_{\gamma\in\text{\rm Emb}_F(K,\overline F)}b_{\gamma}$. 
Then the dual torus is 
$$
 T\sphat=X(T){\otimes}_{\Bbb Z}\Bbb C^{\times}
        =S\sphat/\Bbb C^{\times}
$$
where $\Bbb C^{\times}\subset S\sphat$ is the subgroup of the 
$\Bbb C^{\times}$-valued constant function on 
$\text{\rm Emb}_F(K,\overline F)$ or on $W_F$. 

The exact sequence
$$
 1\to\Bbb C^{\times}\to S\sphat\to T\sphat\to 1
$$
induces the exact sequence 
$$
 H_{\text{\rm conti}}^1(W_F,\Bbb C^{\times})\to 
 H_{\text{\rm conti}}^1(W_F,S\sphat)\to 
 H_{\text{\rm conti}}^1(W_F,T\sphat)\to 
 H_{\text{\rm conti}}^2(W_F,\Bbb C^{\times}).
$$
Since we have $H_{\text{\rm conti}}^2(W_F,\Bbb C^{\times})=\{1\}$ by 
\cite{Karpuk2013}, the canonical surjection $S\sphat\to T\sphat$
induces a canonical surjection
\begin{equation}
 H_{\text{\rm conti}}^1(W_F,S\sphat)\to 
 H_{\text{\rm conti}}^1(W_F,T\sphat)
\label{eq:canonical-surjection-of-1-cohomology-of-tori}.
\end{equation}

Then we have

\begin{prop}
\label{prop:local-langlands-correspondence-of-elliptic-tori}
There exists an isomorphism
\begin{equation}
 H_{\text{\rm conti}}^1(W_F,T\sphat)\,\tilde{\to}\,
 \text{\rm Hom}_{\text{\rm conti}}(U_{K/F},\Bbb C^{\times})
\label{eq:local-langlands-correspondence-of-elliptic-torus-sl(n)}
\end{equation}
such that the following diagram is commutative:
\begin{equation*}
\begin{tikzcd}
 H_{\text{\rm conti}}^1(W_F,S\sphat)
  \arrow{r}{\eqref{eq:continuous-local-langlands-correspondence-for-torus}}
  \arrow{d}[swap]{\eqref{eq:canonical-surjection-of-1-cohomology-of-tori}}
  &\text{\rm Hom}_{\text{\rm conti}}(K^{\times},\Bbb C^{\times})
    \arrow{d}{\eqref{eq:canonical-surjection-of-hom-group-of-elliptic-tori}}\\
 H_{\text{\rm conti}}^1(W_F,T\sphat)
  \arrow{r}[swap]{\eqref{eq:local-langlands-correspondence-of-elliptic-torus-sl(n)}}
  &\text{\rm Hom}_{\text{\rm conti}}(U_{K/F},\Bbb C^{\times})
\end{tikzcd}
\end{equation*}
\end{prop}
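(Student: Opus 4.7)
The plan is to construct the isomorphism \eqref{eq:local-langlands-correspondence-of-elliptic-torus-sl(n)} as the map induced on quotients by the top horizontal arrow \eqref{eq:continuous-local-langlands-correspondence-for-torus}, after verifying that both vertical arrows in the diagram are surjective and have kernels that correspond under this top arrow.

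First I would record that the left vertical map \eqref{eq:canonical-surjection-of-1-cohomology-of-tori} is a surjection whose kernel is the image of $H^1_{\text{\rm conti}}(W_F,\Bbb C^{\times})\to H^1_{\text{\rm conti}}(W_F,S\sphat)$; both statements follow from the long exact sequence of continuous $W_F$-cohomology attached to $1\to\Bbb C^{\times}\to S\sphat\to T\sphat\to 1$, combined with the vanishing $H^2_{\text{\rm conti}}(W_F,\Bbb C^{\times})=\{1\}$ of Karpuk already cited in the excerpt. A representative of this kernel is a cocycle of the form $\alpha_\chi(\sigma)(\gamma)=\chi(\sigma)$, constant in $\gamma\in\text{\rm Emb}_F(K,\overline F)$, arising from a continuous character $\chi:W_F\to\Bbb C^{\times}$.

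Next I would check that the right vertical arrow \eqref{eq:canonical-surjection-of-hom-group-of-elliptic-tori} is surjective by Pontryagin duality: any continuous character of the closed subgroup $U_{K/F}\subset K^{\times}$ extends to $K^{\times}$ because $\Bbb C^{\times}$ is divisible. Its kernel consists of characters of $K^{\times}$ trivial on $U_{K/F}$; since $K^{\times}/U_{K/F}$ injects via $N_{K/F}$ into $F^{\times}$ and any character of this image extends to $F^{\times}$, the kernel equals $\{\widetilde\chi\circ N_{K/F}\mid\widetilde\chi\in\text{\rm Hom}_{\text{\rm conti}}(F^{\times},\Bbb C^{\times})\}$.

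The main point is to trace through the explicit formula for \eqref{eq:continuous-local-langlands-correspondence-for-torus} and see that it carries the first kernel bijectively onto the second. The cocycle $\alpha_\chi$ above is sent by \eqref{eq:local-langlands-correspondence-for-torus} to $\rho\mapsto\alpha_\chi(\rho)(\text{\bf 1}_K)=\chi(\rho)$, i.e.\ to $\chi|_{W_K}$, so under the composite \eqref{eq:continuous-local-langlands-correspondence-for-torus} it becomes the character $\chi|_{W_K}\circ\delta_K$ of $K^{\times}$. By the functoriality of local class field theory, the inclusion $W_K^{\text{\rm ab}}\hookrightarrow W_F^{\text{\rm ab}}$ corresponds under the reciprocity isomorphisms $\delta_K$ and $\delta_F$ to the norm $N_{K/F}:K^{\times}\to F^{\times}$, so this character equals $\widetilde\chi\circ N_{K/F}$ with $\widetilde\chi=\chi\circ\delta_F^{-1}$. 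Thus the top arrow matches the two kernels, and the induced map on the quotients gives the desired isomorphism \eqref{eq:local-langlands-correspondence-of-elliptic-torus-sl(n)}, fitting tautologically into the commutative square.

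The hardest step is the last one: the identity $\chi|_{W_K}\circ\delta_K=(\chi\circ\delta_F^{-1})\circ N_{K/F}$ is the only nonformal input, and it requires careful bookkeeping of the cocycle formula written out in the excerpt together with the compatibility of Artin reciprocity with norms. Once that is in hand, everything else in the proof is a diagram chase on short exact sequences.
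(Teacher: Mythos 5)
Your argument is correct and is essentially the paper's proof: both reduce the proposition to showing that, under the explicit correspondence \eqref{eq:continuous-local-langlands-correspondence-for-torus}, the kernel of \eqref{eq:canonical-surjection-of-1-cohomology-of-tori} is carried exactly onto the characters of $K^{\times}$ of the form $\widetilde\chi\circ N_{K/F}$, the sole nonformal input being the norm compatibility of local class field theory. The paper carries out this kernel comparison by a direct cocycle computation (setting $s(\sigma)=\alpha(\sigma)(\text{\bf 1}_K)\cdot\widetilde c(\sigma)^{-1}$ and checking it lies in $S\sphat$ and trivializes $\alpha$ modulo $\Bbb C^{\times}$), while you package the same computation through exactness of the long exact sequence at $H^1_{\text{\rm conti}}(W_F,S\sphat)$ and evaluation of the explicit formula on constant cocycles --- a reorganization rather than a genuinely different route.
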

\begin{proof}
See \cite{Yu2009} for the arguments with general tori. A direct proof
for our specific setting is as follows. 

It is enough to show that 
an $\alpha\in Z^1_{\text{\rm conti}}(W_F,S\sphat)$ mapped to the
trivial element in $H^1(W_F,T\sphat)$ by 
\eqref{eq:canonical-surjection-of-1-cohomology-of-tori} if and only if 
$\alpha$ is mapped to the trivial element of 
$\text{\rm Hom}_{\text{\rm conti}}(U_{K/F},\Bbb C^{\times})$ by the
composite of 
\eqref{eq:continuous-local-langlands-correspondence-for-torus} and 
\eqref{eq:canonical-surjection-of-hom-group-of-elliptic-tori}. 
Let 
$c\in\text{\rm Hom}_{\text{\rm conti}}(K^{\times},\Bbb C^{\times})$ be
the image of $\alpha$ by 
\eqref{eq:continuous-local-langlands-correspondence-for-torus}, that
is, $c(x)=\alpha(\rho)(\text{\bf 1}_K)$ with 
$\rho\equiv\delta_K(x)\npmod{\overline{[W_K,W_K]}}$. By the
commutative diagram of the local class field theory
$$
\begin{tikzcd}
 W_K\arrow{r}
    \arrow{d}[swap]{\text{\rm inclusion}}
 &W_K/\overline{[W_K,W_K]}\arrow{r}{\sim}[swap]{\delta_K^{-1}}
  &K^{\times}\arrow{d}{N_{K/F}}\\
 W_F\arrow{r}
 &W_F/\overline{[W_F,W_F]}\arrow{r}{\sim}[swap]{\delta_F^{-1}}
  &F^{\times}
\end{tikzcd}
$$
$c$ is trivial on $U_{K/F}$ if and only if $c$ is extended to 
$\widetilde c\in
 \text{\rm Hom}_{\text{\rm conti}}(F^{\times},\Bbb C~{\times})$. If
so, put 
$s(\sigma)
 =\alpha(\sigma)(\text{\bf 1}_K)\cdot\widetilde c(\sigma)^{-1}$
for $\sigma\in W_F$. Then the cocycle condition of $\alpha$ implies
that $s$ is left $W_K$-invariant, and hence $s\in S\sphat$, and that 
$s(\sigma)\equiv s^{\sigma-1}\npmod{\Bbb C^{\times}}$ for all
$\sigma\in W_F$. Conversely if 
$\alpha(\sigma)\equiv s^{\sigma-1}\npmod{\Bbb C^{\times}}$ 
($\sigma\in W_F$) for some $s\in S\sphat$, then 
$\widetilde c=\alpha(\sigma)(\text{\bf 1}_K)$ ($\sigma\in W_F$) gives
an extension of $c$.
\end{proof}

Put $^L\!T=W_F\ltimes T\sphat$. Then a cohomology class 
$[\alpha]\in H^1_{\text{\rm conti}}(W_F,T\sphat)$ defines a continuous
group homomorphism 
\begin{equation}
 \widetilde\alpha:W_F\to\,^LT
 \quad
 (\sigma\mapsto(\sigma,\alpha(\sigma)))
\label{eq:langlands-parameter-for-torus}
\end{equation}
and $[\alpha]\mapsto\widetilde\alpha$ induces a well-defined bijection
$$
 H^1_{\text{\rm conti}}(W_F,T\sphat)\,\tilde{\to}\,
 \text{\rm Hom}_{\text{\rm conti}}^{\ast}(W_F,^LT)
 /\text{\rm ``$T\sphat$-conjugate"}
$$
where $\text{\rm Hom}_{\text{\rm conti}}^{\ast}(W_F,^LT)$ denotes the
set of the continuous group homomorphisms $\psi$ of $W_F$ to $^LT$
such that 
$W_F\xrightarrow{\psi}\,^LT\xrightarrow{\text{\rm proj.}}W_F$ is the
identity map.

\subsection{$\chi$-datum}
\label{subsec:chi-datum}
In this subsection, let us assume that $K/F$ is a Galois
extension and put $\Gamma=\text{\rm Gal}(K/F)$. For a 
$\gamma\in\Gamma$ of order two (if any), let us denote by $K_{\gamma}$
the intermediate subfield of $K/F$ such that 
$\text{\rm Gal}(K/K_{\gamma})=\langle\gamma\rangle$. 

Let $\Bbb S\sphat$ be the maximal torus of $GL_n(\Bbb C)$ consisting
of the non-singular diagonal matrices. Then 
$\Bbb T\sphat=\Bbb S\sphat/\Bbb C^{\times}$ is the maximal torus of 
the complex projective general linear group
$PGL_n(\Bbb C)=GL_n(\Bbb C)/\Bbb C^{\times}$. 

We have an isomorphism $T\sphat\,\tilde{\to}\,\Bbb T\sphat$ given by
$$
 s\mapsto\text{\rm diag}(s(\gamma_1),\cdots,s(\gamma_n))
$$
where 
$\text{\rm Emb}_F(K,\overline F)=\{\gamma_i\}_{1\leq i\leq n}$. 
The action
of $W_F$ on $T\sphat$ induces the action on $\Bbb T\sphat$ which
factors through $\Gamma$. 

The Weyl group 
$W(\Bbb S\sphat)
 =N_{GL_n(\Bbb C)}(\Bbb S\sphat)/\Bbb S\sphat$ on $\Bbb S\sphat$ is
 identified with a subgroup of the permutation group $S_n$. 
Then any $w\in W(\Bbb S\sphat)$ is represented by 
the permutation matrix $[w]\in GL_n(\Bbb C)$ corresponding to 
$w\in W(\Bbb S\sphat)\subset S_n$. Put 
$\widetilde w=[w]\npmod{\Bbb C^{\times}}
 \in N_{PGL_n(\Bbb C)}(\Bbb T\sphat)$. 

For any $\gamma\in\text{\rm Emb}_F(K,\overline F)=\Gamma$, 
let us denote by $a_{\gamma}$ an element of $X(S\sphat)$ such that 
$a_{\gamma}(s)=s(\gamma)$ for all $s\in T\sphat$. Put 
$a_i=a_{\gamma_i}\in X(\Bbb S\sphat)$ ($1\leq i\leq n$). Then 
$$
 \Phi(S\sphat)=\Phi(\Bbb S\sphat)
 =\left\{a_i\cdot a_j^{-1}\mid 1\leq i,j\neq n, i\neq j
        \right\}
$$
is the set of the
roots of $GL_n(\Bbb C)$ with respect to $S\sphat=\Bbb S\sphat$
with the simple roots
$$
 \Delta=\{\alpha_i
 =a_i\cdot a_{i+1}^{-1}\mid 1\leq i<n\}.
$$
Let $\{X_{\alpha},X_{-\alpha},H_{\alpha}\}$ be the standard triple
associate with a simple root $\alpha\in\Delta$. Then 
$s_{\alpha}\in W(\Bbb S\sphat)$ is represented by
$$
 n(s_{\alpha})
 =\exp(X_{\alpha})\cdot\exp(-X_{-\alpha})\cdot\exp(X_{\alpha})
 \in N_{GL_n(\Bbb C)}(\Bbb S\sphat)
$$
and $W(\Bbb S\sphat)$ is generated by
$\mathcal{S}=\{s_{\alpha}\}_{\alpha\in\Delta}$. 
For any $w\in W(\Bbb T\sphat)$, let $w=s_1s_2\cdots s_r$ 
($s_i\in \mathcal{S}$) be a reduced presentation and put
$$
 n(w)=n(s_1)n(s_2)\cdots n(s_r)
 \in N_{GL_n(\Bbb C)}(\Bbb T\sphat).
$$
Then 
$$
 r(w)=[w]^{-1}n(w)
     =\prod_{\stackrel{\scriptstyle \alpha<0,}
                      {\alpha^{w^{-1}}>0}}\varepsilon(\alpha)
     \in\Bbb T\sphat
$$
where 
$\prod_{\alpha<0,\alpha^{w^{-1}}>0}$ is the product over negative
roots $\alpha\in\Phi(\Bbb T\sphat)$ such that $\alpha^{w^{-1}}$ is
positive, and 
$\varepsilon(\alpha)\in\Bbb S\sphat$ with 
$\alpha=a_ia_j^{-1}\in\Phi(\Bbb T\sphat)$ is the diagonal matrix whose
diagonal elements are $1$ except $j$-the one is $-1$. 

The action of
$\sigma\in W_F$ on $X(S\sphat)$ induced from the action on $S\sphat$ is such
that $a_{\gamma}^{\sigma}=a_{\gamma\sigma}$ for all 
$\gamma\in\text{\rm Emb}_F(K,\overline F)$, and it determines an
element $w(\sigma)\in W(\Bbb S\sphat)$. Then
\cite{LanglandsShelstad1987} shows that the $2$-cocycle 
$t\in Z^2(W_F,\Bbb S\sphat)$ defined by
$$
 t(\sigma,\sigma^{\prime})
 =n(w(\sigma\sigma^{\prime}))^{-1}n(w(\sigma))\cdot
  n(w(\sigma^{\prime}))
 \quad
 (\sigma, \sigma^{\prime}\in W_F)
$$
is split by $r_p:W_F\to\Bbb S\sphat$ defined by $\chi$-data as follows.

For any $\lambda\in\Phi(S\sphat)$, put
$$
 \Gamma_{\lambda}
  =\{\sigma\in\Gamma\mid\lambda^{\sigma}=\lambda\},
 \quad
 \Gamma_{\pm\lambda}
  =\{\sigma\in\Gamma\mid\lambda^{\sigma}=\pm\lambda\}
$$
and put $F_{\lambda}=L^{\Gamma_{\lambda}}$, 
$F_{\pm\lambda}=L^{\Gamma_{\pm\lambda}}$. Then 
$(F_{\lambda}:F_{\pm\lambda})=\text{\rm $1$ or $2$}$ , 
and $\lambda$ is called symmetric if $(F_{\lambda}:F_{\pm\lambda})=2$.

The Galois group $\Gamma$ acts on $\Phi(S\sphat)$ and 
$$
 \Phi(S\sphat)/\Gamma
 =\{a_{\text{\bf 1}_K}a_{\gamma}^{-1}
                        \mid 1\neq\gamma\in\Gamma\}.
$$
If $\lambda=a_{\text{\bf 1}_K}a_{\gamma}^{-1}$, then $\lambda$ is
symmetric if and only if $\gamma^2=1$. In this case $F_{\lambda}=K$
and $F_{\pm\lambda}=K_{\gamma}$, and choose a continuous character 
$\chi_{\gamma}:F_{\lambda}^{\times}\to\Bbb C^{\times}$ such that 
$\chi_{\lambda}|_{F_{\pm\lambda}^{\times}}
 :K_{\gamma}^{\times}\to\{\pm 1\}$ is the character of the quadratic
extension $K/K_{\gamma}$. 

These characters are parts of a system of $\chi$-data
$\chi_{\lambda}:F_{\lambda}\to\Bbb C^{\times}$ 
($\lambda\in\Phi(\Bbb T\sphat)$) 
such that
\begin{enumerate}
\item $\chi_{-\lambda}=\chi_{\lambda}^{-1}$ and 
      $\chi_{\lambda^{\sigma}}=\chi_{\lambda}(x^{\sigma^{-1}})$ for
      all $\sigma\in\Gamma$, and
\item $\chi_{\lambda}=1$ if $\lambda$ is not symmetric.
\end{enumerate}
With this $\chi$-data and the gauge 
$$
 p:\Phi(\Bbb S\sphat)\to\{\pm 1\}\;\text{\rm s.t.}\;
 p(\lambda)=\begin{cases}
             1&:\lambda>0,\\
            -1&:\lambda<0,
            \end{cases}
$$
the mechanism of \cite{LanglandsShelstad1987} gives 
a $r_p:W_F\to\Bbb S\sphat$ such that
$$
 t(\sigma,\sigma^{\prime})
 =r_p(\sigma)^{\sigma^{\prime}}r_p(\sigma\sigma^{\prime})^{-1}
  r_p(\sigma^{\prime})
 \;\;
 \text{\rm for all $\sigma, \sigma^{\prime}\in W_F$}
$$
and
$$
 r_p(\sigma)
 =\prod_{\stackrel{\scriptstyle 1\neq\gamma\in\Gamma}
                  {\gamma^2=1}}
  \prod_{0<\lambda
         \in\{a_{\text{\bf 1}_K}\cdot a_{\gamma}^{-1}\}_{\Gamma}}
  \chi_{\lambda}(x)^{\check\lambda}
$$
if $\dot \sigma=(1,x)\in W_{K/F}=\Gamma\ltimes_{\alpha_{K/F}}K^{\times}$,
where $\{\alpha\}_{\Gamma}$ is the $\Gamma$-orbit of
$\alpha\in\Phi(\Bbb T\sphat)$ and $\check\lambda$ is the co-root of 
$\lambda$. Then we have a group homomorphism 
\begin{equation}
 ^LS=W_F\ltimes\Bbb S\sphat\to GL_n(\Bbb C)
 \quad
 ((\sigma,s)\mapsto n(w(\sigma))r_p(\sigma)^{-1}s).
\label{eq:langlands-shelstad-homomorphism-of-torus-to-dual-group}
\end{equation}
If we put $r(\sigma)=r(w(\sigma))$ for $\sigma\in W_F$, we have
$$
 t(\sigma,\sigma^{\prime})
 =r(\sigma)^{\sigma^{\prime}}r(\sigma\sigma^{\prime})^{-1}
  r(\sigma^{\prime})
 \qquad
 (\sigma, \sigma^{\prime}\in W_F).
$$
Now 
$\chi_p(\sigma)=r(\sigma)\cdot r_p(\sigma)^{-1}$ ($\sigma\in W_F$)
 define an element of $Z^1(W_F,\Bbb S\sphat)$ and the group
 homomorphism
 \eqref{eq:langlands-shelstad-homomorphism-of-torus-to-dual-group} is
\begin{equation}
 ^LS=W_F\ltimes\Bbb S\sphat\to GL_n(\Bbb C)
 \quad
 ((\sigma,s)\mapsto[w(\sigma)]\chi_p(\sigma)\cdot s).
\label{eq:modified-langlands-shelstad-homomorphism-of-torus-to-dual-group}
\end{equation}
Because the actions are trivial on the center $\Bbb C^{\times}$ of
$GL_n(\Bbb C)$, the group homomorphism 
\eqref{eq:langlands-shelstad-homomorphism-of-torus-to-dual-group} or 
\eqref{eq:modified-langlands-shelstad-homomorphism-of-torus-to-dual-group}
can be lifted to the group homomorphism
\begin{equation}
 ^LT=W_F\ltimes\Bbb T\sphat\to PGL_n(\Bbb C)
 \quad
 ((\sigma,s)\mapsto \widetilde w(\sigma)
                    \widetilde\chi_p(\sigma)\cdot s)
\label{eq:langlands-shelstad-homomorphism-of-torus-to-pgl(n)}
\end{equation}
where 
$\widetilde\chi_p(\sigma)
 =\chi_p(\sigma)\npmod{\Bbb C^{\times}}\in PGL_n(\Bbb C)$. 
Let $c\in \text{\rm Hom}_{\text{\rm conti}}(U_{K/F},\Bbb C^{\times})$
be the character corresponding to the cohomology class 
$[\widetilde\chi_p]
 \in H_{\text{\rm conti.}}^1(W_F,\Bbb T\sphat)$ by the local
Langlands correspondence of torus 
\eqref{eq:local-langlands-correspondence-of-elliptic-torus-sl(n)}. 
If we put 
\begin{equation}
 \widetilde c(x)
 =\chi_p(1,x)(\text{\bf 1}_K)
 =\prod_{\stackrel{\scriptstyle 1\neq\gamma\in\Gamma}
                   {\gamma^2=1}}
    \chi_{a_{\text{\bf 1}_K}\cdot a_{\gamma}^{-1}}(x)^{-1}
 \qquad
 (x\in K^{\times}),
\label{eq:explicit-formula-of-extension-of-c}
\end{equation}
then $c=\widetilde c|_{U_{K/F}}$. 
The following proposition will be used in the next two sections.

\begin{prop}\rm\label{prop:c-is-trivial-mod-p-square}
We can choose the $\chi$-data 
$\{\chi_{\lambda}\}_{\lambda\in\Phi(\Bbb T\sphat)}$ so that 
$c(x)=1$ for all $x\in U_{K/F}\cap(1+\frak{p}_K^2)$. 
\end{prop}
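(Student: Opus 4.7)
The plan is to choose, for each $1 \neq \gamma \in \Gamma$ with $\gamma^2 = 1$, the character $\chi_{\lambda_\gamma}$ attached to $\lambda_\gamma = a_{\mathbf{1}_K} \cdot a_\gamma^{-1}$ to be a \emph{tame} character of $K^\times$ extending the quadratic character $\eta_{K/K_\gamma} : K_\gamma^\times \to \{\pm 1\}$ associated to $K/K_\gamma$ by local class field theory; to set $\chi_\lambda = 1$ for non-symmetric $\lambda$; and to propagate each $\chi_{\lambda_\gamma}$ across its $\Gamma$-orbit by the axiom $\chi_{\lambda^\sigma}(x) = \chi_\lambda(x^{\sigma^{-1}})$. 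Any tame character of $K^\times$ is by definition trivial on $1 + \frak{p}_K \supseteq 1 + \frak{p}_K^2$, so the explicit formula \eqref{eq:explicit-formula-of-extension-of-c} immediately yields $\widetilde c(x) = 1$ for $x \in 1 + \frak{p}_K^2$, and restricting to $U_{K/F}$ gives the conclusion.

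The construction of such a tame extension is the content of the proof. Because $p \neq 2$, the quadratic extension $K/K_\gamma$ is tamely ramified with $e(K/K_\gamma) \in \{1,2\}$, so $\eta_{K/K_\gamma}$ has conductor exponent $e(K/K_\gamma) - 1 \in \{0,1\}$ and is trivial on $1 + \frak{p}_{K_\gamma}$. A short computation, using $v_K|_{K_\gamma^\times} = e(K/K_\gamma) \cdot v_{K_\gamma}$ and the integrality of $v_{K_\gamma}$, gives
\[
 K_\gamma^\times \cap (1 + \frak{p}_K) = 1 + \frak{p}_{K_\gamma}
\]
in both the unramified and the ramified case. Consequently the assignment equal to $\eta_{K/K_\gamma}$ on $K_\gamma^\times$ and trivial on $1 + \frak{p}_K$ is a well-defined character of the subgroup $K_\gamma^\times \cdot (1 + \frak{p}_K) \subseteq K^\times$, which by injectivity (divisibility) of $\Bbb C^\times$ extends to a tame character $\chi_{\lambda_\gamma}$ of $K^\times$.

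Finally one checks that the $\chi$-datum axioms are respected. The axiom $\chi_{-\lambda_\gamma} = \chi_{\lambda_\gamma}^{-1}$, combined with $-\lambda_\gamma = \lambda_\gamma^\gamma$ and the Galois-equivariance axiom, reduces to $\chi_{\lambda_\gamma}(x \cdot x^\gamma) = 1$ for every $x \in K^\times$; since $x x^\gamma = N_{K/K_\gamma}(x) \in K_\gamma^\times$ this is the assertion $\eta_{K/K_\gamma} \circ N_{K/K_\gamma} \equiv 1$, automatic from local class field theory. Galois propagation within each orbit preserves tameness because $\Gamma$ stabilizes $1 + \frak{p}_K$, and distinct symmetric $\Gamma$-orbits are disjoint, so no conflict arises among the choices for different $\gamma$. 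The principal technical obstacle is the local ramification identity $K_\gamma^\times \cap (1 + \frak{p}_K) = 1 + \frak{p}_{K_\gamma}$; once this is established, everything else is soft algebra.
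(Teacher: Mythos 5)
Your proposal is correct and takes essentially the same route as the paper: both arguments rest on the intersection identity $K_{\gamma}^{\times}\cap(1+\frak{p}_K)=1+\frak{p}_{K_{\gamma}}$ (the paper works with $1+\frak{p}_K^2$ in the ramified case) together with the triviality of the quadratic character of the tame extension $K/K_{\gamma}$ on $1+\frak{p}_{K_{\gamma}}$, and then choose an extension to $K^{\times}$ killing the relevant congruence subgroup so that the formula \eqref{eq:explicit-formula-of-extension-of-c} gives $c=1$ on $U_{K/F}\cap(1+\frak{p}_K^2)$. Your extra steps (explicit construction of the tame extension and verification of the $\chi$-data axioms via $\eta_{K/K_{\gamma}}\circ N_{K/K_{\gamma}}=1$) only make the paper's brief argument more precise.
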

\begin{proof}
Take a $\gamma\in\Gamma$ of order two. Clearly 
$\left(1+\frak{p}_{K_{\gamma}},K/K_{\gamma}\right)=1$.
If $K/K_{\gamma}$ is ramified, then 
$(1+\frak{p}_K^2)\cap K_{\gamma}=1+\frak{p}_{K_{\gamma}}$, and if
$K/K_{\gamma}$ is unramified, then 
$(1+\frak{p}_K)\cap K_{\gamma}=1+\frak{p}_{K_{\gamma}}$. 
So we can assume that $\chi_{a_{\text{\bf 1}_K}a_{\gamma}^{-1}}$ is
trivial on $1+\frak{p}_K^2$. Then 
$c(x)=1$ for all $x\in U_{K/F}\cap(1+\frak{p}_K^2)$.
\end{proof}

\subsection{Explicit value of $c((-1)^{n-1})$}
\label{subsec:explicit-value-of-c((-1)(n-1))}
From now on, we will assume that $K/F$ is a tamely ramified
Galois extension and put $\Gamma=\text{\rm Gal}(K/F)$. 

We will prove the following proposition:

\begin{prop}\label{prop:explicit-value-of-c((-1)(n-1))}
According to the parity of $n=(K:F)$, we have
$$
 c((-1)^{n-1})=\begin{cases}
                1&:\text{\rm $n$=odd},\\
                (-1)^{\frac{q-1}2f}&:\text{\rm $n$=even}.
               \end{cases}
$$
\end{prop}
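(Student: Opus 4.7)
The plan is to evaluate the explicit formula \eqref{eq:explicit-formula-of-extension-of-c} at $x = (-1)^{n-1}$, reducing to one factor for each $\gamma \in \Gamma$ with $\gamma^2 = 1$ and $\gamma \neq 1$. When $n$ is odd, $(-1)^{n-1} = 1$ and so $c((-1)^{n-1}) = c(1) = 1$; equivalently, $|\Gamma| = n$ is odd, $\Gamma$ has no element of order two, and the product defining $\widetilde c$ is empty. Hence only the case $n$ even requires work.

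Assume now $n$ is even, so $N_{K/F}(-1) = (-1)^n = 1$ and $-1 \in U_{K/F}$. Tameness gives a semidirect decomposition $\Gamma = I \rtimes \langle \phi \rangle$, where $I$ is the inertia subgroup (cyclic of order $e$, generated by some $\sigma$), $\phi$ has order $f$, and $\phi \sigma \phi^{-1} = \sigma^q$. For an element $\sigma^i \phi^j$, one computes $(\sigma^i \phi^j)^2 = \sigma^{i(1+q^j)} \phi^{2j}$, so the condition $\gamma^2 = 1$ (with $\gamma \neq 1$) forces either $j = 0$ with $e \mid 2i$ (yielding the unique inertial element $\gamma = \sigma^{e/2}$, which exists precisely when $e$ is even), or $j = f/2$ with $e \mid i(1 + q^{f/2})$ (yielding non-inertial elements $\gamma = \sigma^i \phi^{f/2}$, occurring only when $f$ is even).

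The factor contributed by each such $\gamma$ to $\widetilde c(-1)$ is $\chi_{a_{\text{\bf 1}_K} a_\gamma^{-1}}(-1)^{-1}$, which equals $\chi_{K/K_\gamma}(-1) \in \{\pm 1\}$ (the value at $-1 \in K_\gamma^\times$ of the quadratic character of $K/K_\gamma$ supplied by local class field theory). If $\gamma \notin I$, then $\gamma$ acts non-trivially on the residue field, so $K/K_\gamma$ is quadratic unramified and $-1$ (a unit of trivial valuation) is automatically a norm; hence the factor is $1$. If $\gamma = \sigma^{e/2} \in I$, then $K/K_\gamma$ is quadratic totally tamely ramified, and the inclusion $K_\gamma \supset K_0$ forces the residue field of $K_\gamma$ to equal $\Bbb K$, of cardinality $q^f$; using the standard identification $N_{K/K_\gamma}(O_K^\times) = \{u \in O_{K_\gamma}^\times : \bar u \in (\Bbb K_\gamma^\times)^2\}$ for tame ramified quadratic extensions, the factor becomes $(-1)^{(q^f-1)/2}$.

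Combining: $c(-1) = (-1)^{(q^f-1)/2}$ when $e$ is even, and $c(-1) = 1$ when $e$ is odd (which forces $f$ to be even). An easy congruence modulo $4$ applied to $q^f - 1 = (q-1)(1 + q + \cdots + q^{f-1})$ with $q$ odd gives $q^f - 1 \equiv f(q-1) \pmod 4$, hence $(-1)^{(q^f-1)/2} = (-1)^{f(q-1)/2}$; moreover this common value is $1$ whenever $f$ is even. Thus in every subcase $c((-1)^{n-1}) = (-1)^{f(q-1)/2}$, proving the proposition. The principal technical content is the structural enumeration of the order-two elements of $\Gamma$ together with the explicit description of unit norms in a tame ramified quadratic extension; the remainder is careful bookkeeping.
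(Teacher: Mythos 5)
Your argument follows essentially the same route as the paper: evaluate the product formula \eqref{eq:explicit-formula-of-extension-of-c} at $(-1)^{n-1}$, observe that each involution $\gamma\in\Gamma$ contributes the value of the quadratic character of $K/K_{\gamma}$ at $-1$, that this is $1$ when $K/K_{\gamma}$ is unramified (i.e.\ $\gamma\notin\text{\rm Gal}(K/K_0)$) and $(-1)^{(q^f-1)/2}$ when it is ramified (the unique inertial involution, existing iff $e$ is even), and finish with $\tfrac{q^f-1}2\equiv\tfrac{q-1}2 f\pmod 2$ and the remark that $e$ odd, $n$ even forces $f$ even. All of that matches the paper's proof and is correct.

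One step as stated is false, however: tameness does \emph{not} give a semidirect decomposition $\Gamma=I\rtimes\langle\phi\rangle$ with a Frobenius lift $\phi$ of exact order $f$. The tame Galois group is only metacyclic, generated by $\delta$ (inertia, order $e$) and $\rho$ with $\rho\delta\rho^{-1}=\delta^{q}$ and $\rho^{f}=\delta^{m}$ for some $0\le m<e$; for instance a tame cyclic quartic extension of $\Bbb Q_p$ with $e=f=2$ has $\Gamma\cong\Bbb Z/4$, which admits no Frobenius lift of order $2$ and no non-inertial involution at all, contrary to your enumeration. This is exactly why the paper proves Proposition \ref{prop:order-two-element-in-tamely-ramified-galois-group}, keeping track of $m$. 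In your argument the error happens to be harmless: the only facts you actually use are that inertia, being cyclic of order $e$, contains an involution precisely when $e$ is even (and then only one), and that every non-inertial involution, however many there are, yields an unramified quadratic $K/K_{\gamma}$ and hence contributes $1$. If you replace the splitting claim by this weaker (and true) statement, your proof is complete and coincides with the paper's.
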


In order to prove the proposition, we need the structure of the set of
the order two elements of $\Gamma=\text{\rm Gal}(K/F)$. Put
\begin{equation}
 \text{\rm Gal}(K/F)=\langle\delta,\rho\rangle
\label{eq:generator-of-tame-galois-group}
\end{equation}
where $\text{\rm Gal}(K/K_0)=\langle\delta\rangle$ with the maximal
unramified subextension $K_0/F$ of $K/F$ and 
$\rho|_{K_0}\in\text{\rm Gal}(K_0/F)$ is the inverse of the Frobenius
automorphism. There exists a prime element $\varpi_K$ of $K$ such that
$\varpi_K^e\in K_0$. Then 
$\sigma\mapsto\varpi_K^{1-\sigma}\npmod{\frak{p}_K}$ is an
injective group homomorphism of $\text{\rm Gal}(K/K_0)$ into $\Bbb
K^{\times}$, and hence $e|q^f-1$. Put 
$\rho^f=\delta^m$ with $0\leq m<e$.
We have a relation $\rho^{-1}\delta\rho=\delta^q$ 
due to Iwasawa \cite{Iwasawa1955} and hence
$$
 \delta^m=\rho^{-1}\delta^m\rho=\delta^{qm}
$$
that is $m(q-1)\equiv 0\npmod{e}$. So we have
\begin{equation}
 \rho^{f(q-1)}=1
\label{eq:order-of-rho-divide-f(q-1)}
\end{equation}
Since $f$ divides $\text{\rm ord}(\rho)$, we have
$$
 \text{\rm ord}(\rho)=f\cdot\frac e{\text{\rm GCD}\{e,m\}}.
$$
The structure of the elements of order two in $\text{\rm Gal}(K/F)$
plays an important role in our arguments, and we have

\begin{prop}\label{prop:order-two-element-in-tamely-ramified-galois-group}
Assume that $|\text{\rm Gal}(K/F)|$ is even. Then
$$H=\{\gamma\in\text{\rm Gal}(K/F)\mid\gamma^2=1\}
  \subset Z(\text{\rm Gal}(K/F))
$$ 
and
$$
 H=\begin{cases}
    \{1,\delta^{\frac e2}\}&:f=\text{\rm odd}\;\text{\rm or}\;
                             \left\{\begin{array}{l}
                                     e=\text{\rm even},\\
                                     m=\text{\rm odd}
                                    \end{array}\right.\\
    \{1,\rho^{\frac f2}\delta^{-\frac m2}\}
                           &:e=\text{\rm odd}\,,m=\text{\rm even}\\
    \{1,\rho^{\frac f2}\delta^{\frac{e-m}2}\}
                           &:e=\text{\rm odd}\,,m=\text{\rm odd}\\
    \{1,\delta^{\frac e2},\rho^{\frac f2}\delta^{-\frac m2},
        \rho^{\frac f2}\delta^{\frac{e-m}2}\}
               &:f=\text{\rm even}\,,e=\text{\rm even}\,,
                 m=\text{\rm even}.
  \end{cases}
$$
For $\gamma\in\text{\rm Gal}(K/F)$ of order two, the quadratic
extension $K/K_{\gamma}$ is ramified  if and only if 
$\gamma\in\text{\rm Gal}(K/K_0)$.
\end{prop}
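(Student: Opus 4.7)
My plan is to work directly with the presentation $\Gamma=\langle\delta,\rho\mid\delta^e=1,\;\rho^{-1}\delta\rho=\delta^q,\;\rho^f=\delta^m\rangle$, write each element uniquely as $\delta^a\rho^b$ with $0\le a<e$, $0\le b<f$ (using that $\Gamma/\langle\delta\rangle=\text{\rm Gal}(K_0/F)$ is cyclic of order $f$), and enumerate the order-two elements by direct computation of squares. Iterating the commutation relation gives $\rho^b\delta^a\rho^{-b}=\delta^{aq^{-b}}$, and hence
\[
 (\delta^a\rho^b)^2=\delta^{a(1+q^{-b})}\rho^{2b}.
\]
The condition $(\delta^a\rho^b)^2=1$ forces $\rho^{2b}\in\langle\delta\rangle$, which in the quotient $\Gamma/\langle\delta\rangle$ reads $f\mid 2b$; thus $b=0$ or, when $f$ is even, $b=f/2$.

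The case $b=0$ reduces to $\delta^{2a}=1$ and yields $\{1,\delta^{e/2}\}$ if $e$ is even and $\{1\}$ if $e$ is odd. For $b=f/2$, substituting $\rho^f=\delta^m$ turns the condition into the linear congruence $a(1+d)\equiv-m\pmod e$ with $d:=q^{-f/2}$. The compatibility $m(q-1)\equiv 0\pmod e$ (which is the requirement $\rho^f\in Z(\Gamma)$) forces $e\mid q^f-1$, hence $d^2\equiv 1\pmod e$. The analysis then splits by parities. When $e$ is even and $m$ is odd, $d$ is an odd unit so $1+d$ and $a(1+d)$ are both even while $-m$ is odd, so no solution exists. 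When $e$ is odd, a prime-by-prime argument using $e\mid m(q-1)$ forces $q^{f/2}\equiv 1\pmod e$, whence $2a\equiv -m\pmod e$ has the unique solution $a\equiv -m/2\pmod e$ or $a\equiv(e-m)/2\pmod e$ according as $m$ is even or odd. When both $e$ and $m$ are even, the same divisibility check produces exactly the two solutions $a=-m/2$ and $a=(e-m)/2$.

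Centrality is then checked element by element: an element $\delta^a\rho^b$ commutes with $\delta$ iff $q^b\equiv 1\pmod e$, and with $\rho$ iff $a(q-1)\equiv 0\pmod e$. For $\delta^{e/2}$ the first holds trivially and the second because $q$ is odd (since $p\neq 2$). For the elements with $b=f/2$ the first is precisely the identity $q^{f/2}\equiv 1\pmod e$ extracted above, while the second follows from $m(q-1)\equiv 0\pmod e$ together with the explicit value of $a$. The ramification assertion is then immediate: the inertia subgroup of $\text{\rm Gal}(K/F)$ is $\text{\rm Gal}(K/K_0)=\langle\delta\rangle$, so the inertia subgroup of $\text{\rm Gal}(K/K_\gamma)=\langle\gamma\rangle$ is $\langle\gamma\rangle\cap\langle\delta\rangle$; this equals $\langle\gamma\rangle$ whenever $\gamma\in\langle\delta\rangle$ (so $K/K_\gamma$ is totally ramified of degree two) and is trivial otherwise (so $K/K_\gamma$ is unramified, since then $\gamma|_{K_0}$ has order two in $\text{\rm Gal}(K_0/F)$).

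The hard part will be the arithmetic extraction of $q^{f/2}\equiv 1\pmod e$ in the subcases where $b=f/2$ contributes to $H$. This identity is not a formal consequence of the presentation and must be coaxed out of $m(q-1)\equiv 0\pmod e$ by localizing at each prime $p\mid e$ and exploiting the dichotomy that $p\mid q-1$ already forces $d\equiv 1\pmod p$, while $p\mid m$ interacts with the parity hypothesis on $m$ to rule out $d\equiv -1\pmod p$; this is what ultimately drives the clean case split in the statement.
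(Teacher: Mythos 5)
Your purely group-theoretic skeleton is fine as far as it goes: the reduction to $b\in\{0,\,f/2\}$, the identity $(\delta^a\rho^{f/2})^2=\delta^{a(1+q^{-f/2})+m}$, the mod~$2$ exclusion when $e$ is even and $m$ is odd, and the closing ramification criterion via $\langle\gamma\rangle\cap\langle\delta\rangle$ all check out and agree in substance with the paper. The genuine gap is exactly the step you flag as ``the hard part'': the identity $q^{f/2}\equiv 1\pmod{e}$ is \emph{not} a consequence of $m(q-1)\equiv 0\pmod{e}$ together with parity considerations, so no prime-by-prime argument can extract it. Concretely, take $e=5$, $f=4$, $m=0$, $q\equiv 2\pmod 5$: then $e\mid q^f-1$, $m(q-1)\equiv 0\pmod e$, $e$ is odd and $m$ is even, yet $q^{f/2}\equiv 4\pmod 5$; moreover $1+q^{-f/2}\equiv 0\pmod 5$, so your congruence $a(1+d)\equiv -m\pmod e$ is solved by \emph{every} $a$, producing five involutions $\delta^a\rho^{2}$, none of them central. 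This is not a phantom presentation: it is realized by the tame Galois extension $K=\Bbb Q_7\bigl(\mu_{7^4-1},7^{1/5}\bigr)$ of $F=\Bbb Q_7$, whose Galois group is the Frobenius group of order $20$ with trivial center. (A smaller slip: $e\mid q^f-1$ also does not follow from $m(q-1)\equiv 0\pmod e$; it comes from conjugating $\delta$ by $\rho^f=\delta^m$, or, as in the paper, from the embedding $\sigma\mapsto\varpi_K^{1-\sigma}$ of tame inertia into the residue field.)

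The paper does not attempt your arithmetic extraction. For an involution $\gamma\notin\text{\rm Gal}(K/K_0)$ it argues field-theoretically: $K/K_{\gamma}$ must be unramified, hence $e(K_{\gamma}/F)=e$ and $f(K_{\gamma}/F)=f/2$, and from this it concludes $e\mid q^{f/2}-1$; the same subfield $K_{\gamma}$ is used again to get $\rho^{f(q-1)/2}=1$, which is what drives the centrality check. So the input your plan discards --- the subextension $K_{\gamma}$ and its ramification data --- is precisely what the argument needs, and by the example above it cannot be replaced by computations with $e,f,m,q$ alone. Be aware, too, that the example satisfies every hypothesis stated in the proposition, which shows that the inference $e(K_\gamma/F)=e,\ f(K_\gamma/F)=f/2\Rightarrow e\mid q^{f/2}-1$ is itself delicate: it uses that the tame inertia of $K_{\gamma}/F$ embeds into its residue field, which presupposes $K_{\gamma}/F$ Galois, i.e.\ essentially the normality of $\langle\gamma\rangle$ that is being proved (in the order-$20$ example $K_{\gamma}/F$ is not Galois and $e\nmid q^{f/2}-1$, and the proposition's conclusion fails). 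In short, your approach as written cannot be completed, and any repair must bring in the field-theoretic structure of $K_{\gamma}$ (and, in fact, additional hypotheses on $K/F$) rather than more clever congruence manipulation.
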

\begin{proof}
Take a $1\neq\gamma\in\text{\rm Gal}(K/F)$ such that $\gamma^2=1$. 

If $\gamma\in\text{\rm Gal}(K/K_0)$, then $e$ is even and
$\gamma=\delta^{\frac e2}$ 
is the unique element of order $2$ of the normal subgroup 
$\text{\rm Gal}(K/K_0)$. So $\gamma\in Z(\text{\rm Gal}(K/F))$. In
this case $K_0\subset K_{\gamma}$ and $K/K_{\gamma}$ is ramified
extension. 

Assume that $\gamma\not\in\text{\rm Gal}(K/K_0)$. Then 
$\gamma|_{K_0}\in\text{\rm Gal}(K_0/F)$ is of order two 
(hence $f=2f^{\prime}$ is even), 
and $\gamma=\rho^{f^{\prime}}\delta^a$ with $0\leq a<e$. 
Then $K/K_{\gamma}$ is unramified
  extension, because if it was not the case we have $f(K_{\gamma}/F)=f(K/F)$
  and hence $K_0\subset K_{\gamma}$ which means 
$$
 \gamma\in\text{\rm Gal}(K/K_{\gamma})\subset\text{\rm Gal}(K/K_0),
$$
contradicting to the assumption 
$\gamma\not\in\text{\rm Gal}(K/K_0)$.
Then $f(K_{\gamma}/F)=f^{\prime}$ and $e(K_{\gamma}/F)=e$, and hence 
$e|q^{f^{\prime}}-1$. So we have
$$
 1=\gamma^2=\rho^f\rho^{-f^{\prime}}\delta^a\rho^{f^{\prime}}\delta^a
  =\delta^{m+aq^{f^{\prime}}+a}
  =\delta^{2a+m},
$$
hence $2a\equiv -m\npmod{e}$. Then 
$a\equiv -\frac m2\;\text{\rm or}\;\frac{e-m}2\npmod{e}$ 
if $e$ is even (hence $m$ is even), and
$$
 a\npmod{e}=\begin{cases}
             -\frac m2&:\text{\rm if $m$ is even},\\
             \frac{e-m}2&:\text{\rm if $m$ is odd}
            \end{cases}
$$
if $e$ is odd. We have $e|q^{f^{\prime}}-1$ hence
$$
 \delta\gamma
 =\rho^{f^{\prime}}\delta^{q^{f^{\prime}}+a}
 =\rho^{f^{\prime}}\delta^{1+a}
 =\gamma\delta.
$$
Now we have
\begin{equation}
 \rho^{f^{\prime}(q-1)}=1.
\label{eq:f-prime-(q-1)-kill-rho}
\end{equation}
In fact 
$\text{\rm Gal}(K_{\gamma}/F)
 =\langle\delta^{\prime},\rho^{\prime}\rangle$ with 
$\delta^{\prime}=\delta|_{K_{\gamma}}, \rho^{\prime}=\rho|_{K_{\gamma}}$. Then 
$(\rho^{\prime})^{f^{\prime}(q-1)}=1$, that is
$$
 \rho^{f^{\prime}(q-1)}
 \in\text{\rm Gal}(K/K_{\gamma})=\langle\gamma\rangle.
$$
If $\rho^{f^{\prime}(q-1)}\neq 1$, then 
$\rho^{f^{\prime}(q-1)}=\gamma=\rho^{f^{\prime}}\delta^a$, 
therefore 
$$
 \rho^{f^{\prime}q}=\rho^f\delta^a=\delta^{m+a}
 \in\text{\rm Gal}(K/K_0)
$$
and hence $f$ divides $f^{\prime}q$, contradicting to the assumption
that $q$ is odd. Now we have
$$
 \gamma\rho=\rho^{f^{\prime}+1}\delta^{qa}
 =\rho\gamma\cdot\delta^{a(q-1)}.
$$
For $a=-\frac m2$ or $a=\frac{e-m}2$, 
we have $a(q-1)\equiv 0\npmod{e}$ if and only if 
$$
 \frac{q-1}2\equiv 0\npmod{\frac e{\text{\rm GCD}\{e,m\}}}
$$
which is equivalent to 
$\rho^{f\cdot\frac{q-1}2}=\rho^{f^{\prime}(q-1)}=1$. 
Then \eqref{eq:f-prime-(q-1)-kill-rho} implies
$\gamma\rho=\rho\gamma$. Then we have $\gamma$ is an element of the
center of $\text{\rm Gal}(K/F)$. 
\end{proof}

\noindent
{\bf Proof of Proposition 
\ref{prop:explicit-value-of-c((-1)(n-1))}} 
Assume the $n=(K:F)$ is even. Then
$$
 c(-1)=\prod_{\stackrel{\scriptstyle 1\neq\gamma\in\Gamma}
                       {\gamma^2=1}}
         \left(-1,K/K_{\gamma}\right)
$$
by \eqref{eq:explicit-formula-of-extension-of-c}. Here 
$$
 \left(-1,K/K_{\gamma}\right)
 =\begin{cases}
   1&:\text{\rm $K/K_{\gamma}$ is unramified},\\
  (-1)^{\frac{q^f-1}2}
    &:\text{\rm $K/K_{\gamma}$ is ramified}
  \end{cases}
$$
for the quadratic extension $K/K_{\gamma}$ with $1\neq\gamma\in\Gamma$
such that $\gamma^2=1$. The extension $K/K_{\gamma}$ is ramified if
and only if $\gamma\in\text{\rm Gal}(K/K_0)=\langle\delta\rangle$, the
list of Proposition 
\ref{prop:order-two-element-in-tamely-ramified-galois-group} shows
that
$$
 c(-1)=\begin{cases}
        1&:\text{\rm $e$=odd},\\
       (-1)^{\frac{q^f-1}2}&:\text{\rm $e$=even}.
       \end{cases}
$$
Note that
$$
 \frac{q^f-1}2
 =\frac{q-1}2(1+q+q^2+\cdots+q^{f-1})
 \equiv\frac{q-1}2f\npmod{2}.
$$
This complete the proof.

\subsection{$L$-parameters associated with characters of tame elliptic
            tori}
\label{subsec:l-parameter-associated-with-character-of-tame-elliptic-tori}
By local Langlands correspondence of tori described in Proposition 
\ref{prop:local-langlands-correspondence-of-elliptic-tori}, 
the continuous character $\theta$ of $U_{K/F}$ which parametrizes
the irreducible representation $\delta_{\beta,\theta}$ of 
$SL_n(O_F)$ determines, by choosing an extension of $\theta$ to
$K^{\times}$, the cohomology class 
$[\alpha_1]\in H^1_{\text{\rm conti}}(W_F,S\sphat)$ which is mapped
onto 
$[\alpha]\in H^1_{\text{\rm conti}}(W_F,T\sphat)$ 
by \eqref{eq:canonical-surjection-of-1-cohomology-of-tori}, which is
independent of the choice of the extension to $K^{\times}$ of
$\theta$. Then we have a group homomorphisms
\begin{equation}
 \varphi_1:W_F\xrightarrow{\widetilde\alpha_1}
           {^LS}\xrightarrow{
   \eqref{eq:langlands-shelstad-homomorphism-of-torus-to-dual-group}}
          GL_n(\Bbb C)
\label{eq:langlands-parameter-of-kaletha-gl(n)}
\end{equation}
and
\begin{equation}
 \varphi:W_F\xrightarrow{\widetilde\alpha}
         {^LT}\xrightarrow{
   \eqref{eq:langlands-shelstad-homomorphism-of-torus-to-pgl(n)}}
         PGL_n(\Bbb C).
\label{eq:langlands-parameter-of-kaletha-pgl(n)}
\end{equation}
The construction of $\varphi$ shows that 
$\varphi(\sigma)=\varphi_1(\sigma)\npmod{\Bbb C^{\times}}$ 
($\sigma\in W_F$). The definition of 
\eqref{eq:modified-langlands-shelstad-homomorphism-of-torus-to-dual-group}
shows that 
\begin{align*}
 \text{\rm tr}\,\varphi_1(\sigma)
 &=\sum_{\gamma\in\text{\rm Emb}_F(K,\overline F),
          \gamma\sigma=\gamma}
    \chi_p(\sigma)(\gamma)\cdot\alpha(\sigma)(\gamma)\\
 &=\sum_{\dot\gamma\in W_K\backslash W_F, 
          \gamma\sigma\gamma^{-1}\in W_K}
    \chi_p(\sigma)(\gamma)\cdot\alpha(\sigma(\gamma)\\
 &=\sum_{\dot\gamma\in W_K\backslash W_F, 
          \gamma\sigma\gamma^{-1}\in W_K}
    \psi_c\cdot\psi_{\theta}(\gamma\sigma\gamma^{-1})
\end{align*}
for $\sigma\in W_F$.
Here $\psi_c$ and $\psi_{\theta}$ are respectively the elements of 
$\text{\rm Hom}_{\text{\rm conti}}(W_K,\Bbb C)$ corresponding to 
$\widetilde c$ defined by 
\eqref{eq:explicit-formula-of-extension-of-c} and an extension 
$\widetilde\theta
 \in\text{\rm Hom}_{\text{\rm conti}}(K^{\times},\Bbb C^{\times})$ of
 $\theta$ by the isomorphism of the local class field theory
$$
 W_K\to W_K/\overline{[W_K,W_K]}
    \xrightarrow[\delta_K^{-1}]{\sim}K^{\times}.
$$
This shows that 
$\varphi_1$ is the
induced representation of $W_F$ from the character
$\psi_c\cdot\psi_{\theta}$ of $W_K$. So $\varphi_1$ factors through the
canonical surjection 
$$
 W_F\to W_{K/F}=W_F/\overline{[W_K,W_K]}
$$ 
and, if we put $\vartheta=c\cdot\theta$ and 
$\widetilde\vartheta(x)=\widetilde c\cdot\widetilde\theta$, the
extension of $\vartheta$, then we have
\begin{equation}
 \text{\rm tr}\,\varphi_1(\sigma,x)
 =\begin{cases}
   0&:\sigma\neq 1,\\
   \sum_{\gamma\in\text{\rm Gal}(K/F)}
     \widetilde{\vartheta}(x^{\gamma})
    &:\sigma=1
  \end{cases}
\label{eq:chracter-of-o(2n)-part-of-l-parameter}
\end{equation}
for 
$(\sigma,x)\in W_{K/F}
 =\text{\rm Gal}(K/F)\ltimes_{\alpha_{K/F}}K^{\times}$ with the
 fundamental class 
$[\alpha_{K/F}]\in H^2(\text{\rm Gal}(K/F),K^{\times})$.

The representation space $V_{\vartheta}$ of the induced representation 
$\text{\rm Ind}_{K^{\times}}^{W_{K/F}}\widetilde\vartheta$ is the
complex vector space of the $\Bbb C$-valued function $v$ on 
$\text{\rm Gal}(K/F)$ with the action of $(\sigma,x)\in W_{K/F}$
\begin{equation}
 (x\cdot v)(\gamma)
 =\widetilde\vartheta(x^{\gamma})\cdot v(\gamma),
 \quad
 (\sigma\cdot v)(\gamma)
 =\widetilde\vartheta(\alpha_{K/F}(\sigma,\sigma^{-1}\gamma))
    \cdot v(\sigma^{-1}\gamma).
\label{eq:action-of-relative-weil-group-on-induced-rep}
\end{equation}
A $\Bbb C$-basis $\{v_{\rho}\}_{\rho\in\text{\rm Gal}(K/F)}$ 
of $V_{\vartheta}$ is defined by
$$
 v_{\rho}(\gamma)=\begin{cases}
                     1&:\gamma=\rho,\\
                     0&:\gamma\neq\rho.
                    \end{cases}
$$
Then 
$$
 x\cdot v_{\rho}=\widetilde\vartheta(x^{\rho})\cdot v_{\rho},
 \quad
 \sigma\cdot v_{\rho}
 =\widetilde\vartheta(\alpha_{K/F}(\sigma,\rho))\cdot 
   v_{\sigma\rho}
$$
for $(\sigma,x)\in W_{K/F}$. The following proposition will be used to
analyze $\text{\rm Ind}_{K^{\times}}^{W_{K/F}}\widetilde\vartheta$ in
detail. 

\begin{prop}
\label{prop:conductor-and-conjugate-triviality-of-tilde-vartheta}
Assume $l\geq 2$, then for an integer $k\geq 2$
$$
 \left\{\sigma\in\text{\rm Gal}(K/F)\biggm|
   \begin{array}{l}
    \widetilde\vartheta(x^{\sigma})=\widetilde\vartheta(x)\\
     \text{\rm for $\forall x\in 1+\frak{p}_K^k$}
   \end{array}\right\}
 =\begin{cases}
   \text{\rm Gal}(K/F)&:k>e(r-1),\\
   \text{\rm Gal}(K/K_0)&:k=e(r-1),\\
   \{1\}&:k<e(r-1).
  \end{cases}
$$
\end{prop}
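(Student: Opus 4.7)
The plan is to reduce the question to $\widetilde\theta$ via Proposition \ref{prop:c-is-trivial-mod-p-square}, then use the explicit formula of $\widetilde\theta$ to translate Galois-invariance into a valuation estimate on $\beta^{\sigma^{-1}}-\beta$.

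Choose the $\chi$-data so that $\widetilde c$ is trivial on $1+\frak{p}_K^2$. For $k\geq 2$ this gives $\widetilde\vartheta=\widetilde\theta$ on $1+\frak{p}_K^k$, and the choice of extension of $\theta:U_{K/F}\to\Bbb C^{\times}$ to $\widetilde\theta:K^{\times}\to\Bbb C^{\times}$ is irrelevant because two extensions differ by a character pulled back through $N_{K/F}$, hence $\text{\rm Gal}(K/F)$-invariant. Substituting $z=\varpi_F^l y$ in the defining formula of $\theta$ from subsection \ref{subsec:symplectic-space-associated-with-tamely-ramified-ext} gives
$$
 \widetilde\theta(1+z)=\psi\bigl(\varpi_F^{-r}T_{K/F}(z\beta)\bigr)
 \quad(z\in\frak{p}_K^{el}),
$$
and the right-hand side defines a character of $1+\frak{p}_K^k$ for every $k$ with $2k\geq e(r-1)+1$, because then the quadratic error $\varpi_F^{-r}T_{K/F}(z_1z_2\beta)$ lies in $O_F$. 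By the extension ambiguity we may assume $\widetilde\theta$ is given by this formula throughout that range.

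For such $k$ one computes
$$
 \widetilde\theta\bigl((1+z)^{\sigma}\bigr)\widetilde\theta(1+z)^{-1}
 =\psi\bigl(\varpi_F^{-r}T_{K/F}(z(\beta^{\sigma^{-1}}-\beta))\bigr),
$$
so $\sigma$-invariance on $1+\frak{p}_K^k$ is equivalent to $T_{K/F}((\beta^{\sigma^{-1}}-\beta)\frak{p}_K^k)\subset\frak{p}_F^r$, and by the tame trace identity $T_{K/F}(\frak{p}_K^m)\subset\frak{p}_F^n\iff m\geq e(n-1)+1$ this reads $k+v_K(\beta^{\sigma^{-1}}-\beta)\geq e(r-1)+1$. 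The valuation $v_K(\beta^{\sigma^{-1}}-\beta)$ is $+\infty$ for $\sigma=1$. For $\sigma=\delta^j\in\text{\rm Gal}(K/K_0)\setminus\{1\}$, the tame Galois action $\varpi_K^{\sigma}=\zeta\varpi_K$ with $\zeta\in O_{K_0}^{\times}$ a primitive $e$-th root of unity, applied to $\beta=\sum_i a_i\varpi_K^i$, yields $\beta^{\sigma^{-1}}-\beta=\sum_{i\geq 1}a_i(\zeta^{-ji}-1)\varpi_K^i$ whose $i=1$ term is $\varpi_K$ times a unit (using $a_1\in O_{K_0}^{\times}$ by Proposition \ref{prop:shintani-lemma-on-generator-of-ok-over-of} and $\zeta^{-j}-1\in O_{K_0}^{\times}$ since $p\nmid e$), so $v_K=1$. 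For $\sigma\notin\text{\rm Gal}(K/K_0)$, the image $\overline{a_0}$ generates $\Bbb K_0$ over $\Bbb F$ (because $O_K=O_F[\beta]$ reduces modulo $\frak{p}_K$ to $\Bbb F[\overline{a_0}]=\Bbb K_0$) and so is moved by the non-trivial $\sigma|_{K_0}$, giving $a_0^{\sigma^{-1}}-a_0\in O_{K_0}^{\times}$ and $v_K=0$. Substituting into the inequality gives the three cases of the proposition exactly.

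For levels $2\leq k<\lceil(e(r-1)+1)/2\rceil$ below the formula range, the inclusion $1+\frak{p}_K^k\supset 1+\frak{p}_K^{\lceil(e(r-1)+1)/2\rceil}$ shows that $\sigma$-invariance on the larger group forces the same on the smaller, and the latter is ruled out for $\sigma\neq 1$ by the previous paragraph whenever $\lceil(e(r-1)+1)/2\rceil<e(r-1)$, i.e., $e(r-1)\geq 3$; the only remaining possibility $e(r-1)\leq 2$ forces $e=1$ and $r=3$ under $l\geq 2$, in which case the formula range already contains $k=2$ directly. The main technical obstacle is this bookkeeping across ramification and depth regimes; the hypothesis $l\geq 2$ is precisely what is needed to guarantee that the quadratic approximation either reaches down to $k=2$ or else the subgroup-inclusion argument closes the gap.
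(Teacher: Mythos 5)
Your overall route is the paper's: discard $c$ on $1+\frak{p}_K^2$, write the twisting ratio as $\psi\bigl(\varpi_F^{-r}T_{K/F}(z(\beta^{\sigma^{-1}}-\beta))\bigr)$, convert invariance into the estimate $k+v_K(\beta^{\sigma^{-1}}-\beta)\ge e(r-1)+1$ via the tame different, and then evaluate $v_K(\beta^{\sigma^{-1}}-\beta)$; your direct computation of that valuation from Shintani's normal form of $\beta$ is a correct replacement for the paper's appeal to $O_K=O_F[\beta]$ and the ramification filtration $V_t$. The gap is the sentence ``by the extension ambiguity we may assume $\widetilde\theta$ is given by this formula throughout that range''. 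The defining conditions determine $\theta$ only on $U_{K/F}\cap(1+\frak{p}_K^{el})$ (the pull-back of $G_\beta\cap K_l$), and two extensions of $\vartheta$ to $K^\times$ differ by a character factoring through $N_{K/F}$, hence agree on $U_{K/F}$; since the invariance ratio $\widetilde\vartheta(x^\sigma)\widetilde\vartheta(x)^{-1}=\vartheta(x^{\sigma-1})$ involves only values at the norm-one elements $x^{\sigma-1}\in U_{K/F}\cap(1+\frak{p}_K^k)$, no choice of extension can impose your formula at levels $k<el$: there the needed values of $\theta$ are genuinely unconstrained data, not something one may normalize away. Your ``formula range'' $2k\ge e(r-1)+1$ does reach below $el$ as soon as $e\ge 2$ (for $r$ odd) or $e\ge 3$ (for $r$ even), so the equivalence you invoke at those levels is unproved; note the paper itself establishes the explicit formula only at levels $er-k\ge el$.

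This is repairable when $l'\ge 2$ (equivalently $r\ge 4$): shrink the formula range to $k\ge el$, where the computation is legitimate, and run your monotonicity argument from level $el$ instead of $\lceil(e(r-1)+1)/2\rceil$; since then $el<e(r-1)$, level $el$ already gives $\{1\}$ and all $2\le k<el$ follow. But when $l'=1$, i.e.\ $r=3$, $l=2$, and $e\ge 2$, one has $el=e(r-1)$, so monotonicity only yields containment in $\text{\rm Gal}(K/K_0)$, and your treatment of the levels $e+1\le k<2e$ rests entirely on the unjustified formula; for $\sigma\in\text{\rm Gal}(K/K_0)$ the elements $x^{\sigma-1}$, $x\in 1+\frak{p}_K^k$, are not all contained in $U_{K/F}\cap(1+\frak{p}_K^{el})$ at those levels, so the hypotheses simply do not determine the character values you need, and the argument cannot be closed by this route. (In the regime where the proposition is later applied, $l'\ge 2(e-1)$ forces $l'\ge 2$ whenever $e\ge 2$, so the repaired version covers what is needed; but as a proof of the proposition as stated, your argument has a genuine hole in the ramified $r=3$ case.)
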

\begin{proof}
Note that $\vartheta(x)=\theta(x)$ for all 
$x\in U_{K/F}\cap(1+\frak{p}_K^2)$ 
(by Proposition \ref{prop:c-is-trivial-mod-p-square}) and 
$\theta(x)=1$ for all $x\in U_{K/F}\cap(1+\frak{p}_K^{er})$.
Take an integer $k$ such that $0\leq k\leq el^{\prime}$, and hence 
$2\leq el\leq er-k$. Then, for any $x\in O_K$, we have
$$
 (1+\varpi_F^r\varpi_K^{-k}x)^{1-\tau}
 \equiv
 1+\varpi_F^r(\varpi_K^{-k}x-\varpi_K^{-k\tau}x^{\tau})
  \npmod{\frak{p}_K^{er}}
$$
since $2(er-k)\geq er$. Hence, for 
$\alpha=1+\varpi_F\varpi_K^{-k}x\in 1+\frak{p}_K^{er-k}$ ($x\in O_K$), 
we have
\begin{equation}
 \widetilde\vartheta(\alpha)
  =\psi\left(T_{K/F}\left(
        (\varpi_K^{-k}x-\varpi_K^{-k\tau}x^{\tau})\beta\right)\right)
  =\psi\left(2T_{K/F}(\varpi_K^{-k}x\beta)\right). 
\label{eq:explicit-value-of-tilde-vartheta-alpha}
\end{equation}
Because $K/F$ is tamely ramified, we have
\begin{align}
 V_t(K/F)
 &=\{\sigma\in\text{\rm Gal}(K/F)\mid
             \text{\rm ord}_K(x^{\sigma}-x)\geq t+1\;
              \forall x\in O_K\} \nonumber\\
 &=\begin{cases}
    \text{\rm Gal}(K/F)&:t<0,\\
    \text{\rm Gal}(K/K_0)&:0\leq t<1,\\
    \{1\}&:1\leq t.
   \end{cases}
\label{eq:ramification-group-of-tamely-ramified-extension}
\end{align}
Take a $\sigma\in\text{\rm Gal}(K/F)$. Then, by 
\eqref{eq:explicit-value-of-tilde-vartheta-alpha}, we have
$$
 \widetilde\vartheta(\alpha^{\sigma})
 =\psi\left(2T_{K/F}(\varpi_K^{-k\sigma}x^{\sigma}\beta)\right)
  =\psi\left(2T_{K/F}(\varpi_K^{-k}x\beta^{\sigma})\right).
$$
So the statement 
$\widetilde\vartheta(\alpha^{\sigma})=\widetilde\vartheta(\alpha)$ for
all $\alpha\in 1+\frak{p}_K^{er-k}$ is equivalent to the statement 
$\varpi_K^{-k}(\beta^{\sigma}-\beta)
 \in\mathcal{D}(K/F)^{-1}=\frak{p}_K^{1-e}$, or to the statement 
$$
 \text{\rm ord}_K(x^{\sigma}-x)\geq k-e+1
 \;\;\text{\rm for all $x\in O_K$}
$$
since $O_K=O_F[\beta]$, which is equivalent to 
$\sigma\in V_{k-e}$. Then
\eqref{eq:ramification-group-of-tamely-ramified-extension} completes
the proof.
\end{proof}

\begin{prop}
\label{prop:induced-representation-of-vartheta-is-irreducible}
The induced representation 
$\text{\rm Ind}_{K^{\times}}^{W_{K/F}}\widetilde\vartheta$ is
irreducible.
\end{prop}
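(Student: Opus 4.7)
The plan is to apply Mackey's irreducibility criterion to the normal subgroup $K^\times \trianglelefteq W_{K/F}$, whose quotient is $\text{\rm Gal}(K/F)$. The action formulas \eqref{eq:action-of-relative-weil-group-on-induced-rep} on the basis $\{v_\rho\}$ immediately exhibit the restriction to $K^\times$ as a direct sum of the conjugate characters:
$$
 \text{\rm Res}_{K^\times}
   \text{\rm Ind}_{K^\times}^{W_{K/F}}\widetilde\vartheta
 =\bigoplus_{\sigma\in\text{\rm Gal}(K/F)}\widetilde\vartheta^{\sigma},
$$
where $\widetilde\vartheta^{\sigma}(x)=\widetilde\vartheta(x^{\sigma})$. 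Frobenius reciprocity then gives
$$
 \left\langle
   \text{\rm Ind}_{K^\times}^{W_{K/F}}\widetilde\vartheta,\;
   \text{\rm Ind}_{K^\times}^{W_{K/F}}\widetilde\vartheta
  \right\rangle_{W_{K/F}}
 =\sharp\left\{\sigma\in\text{\rm Gal}(K/F)\mid
    \widetilde\vartheta^{\sigma}=\widetilde\vartheta\right\},
$$
so irreducibility is reduced to showing that the $\text{\rm Gal}(K/F)$-stabilizer of $\widetilde\vartheta$ is trivial.

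For this triviality I would pick an integer $k$ with $2\leq k<e(r-1)$ and observe that if $\widetilde\vartheta^{\sigma}=\widetilde\vartheta$ holds on all of $K^\times$, it holds in particular on $1+\frak{p}_K^{k}$; Proposition \ref{prop:conductor-and-conjugate-triviality-of-tilde-vartheta} then forces $\sigma=1$. Such $k$ exists as soon as $e(r-1)\geq 3$, which is provided by the standing hypothesis $l'=\lfloor r/2\rfloor\geq 2(e-1)$ of Theorem \ref{th:supercuspidal-representation-of-sl(n)} away from a small number of boundary configurations; in the remaining corner cases (essentially $r=2$ with $e=1$) one argues directly from the explicit formula \eqref{eq:explicit-value-of-tilde-vartheta-alpha} combined with $O_K=O_F[\beta]$, which show that the stabilizer condition forces $\beta^{\sigma}\equiv\beta$ to sufficient precision, hence $\sigma=1$ since $\beta$ generates $O_K$ over $O_F$.

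The main obstacle is purely this small-$r$ bookkeeping; the conceptual heart of the proof is already encoded in Proposition \ref{prop:conductor-and-conjugate-triviality-of-tilde-vartheta}, which matches the conductor of $\widetilde\vartheta$ against the ramification filtration of the tame Galois group $\text{\rm Gal}(K/F)$ and thereby delivers the Mackey triviality directly.
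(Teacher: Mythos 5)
Your proposal is correct and is essentially the paper's argument: the paper proves the same Mackey-type reduction by hand, showing any $T\in\text{\rm End}_{W_{K/F}}(V_{\vartheta})$ must send $v_1$ to a multiple of itself because $(Tv_1)(\gamma)\neq 0$ forces $\widetilde\vartheta(x^{\gamma})=\widetilde\vartheta(x)$ for all $x\in K^{\times}$, and then invokes Proposition \ref{prop:conductor-and-conjugate-triviality-of-tilde-vartheta} to get $\gamma=1$, exactly your stabilizer-triviality step. Your extra bookkeeping about choosing $k$ with $2\leq k<e(r-1)$ (and the $e=1$, small-$r$ boundary) is harmless added care that the paper glosses over by citing that proposition directly.
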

\begin{proof}
Take a $0\neq T\in\text{\rm End}_{W_{K/F}}(V_{\vartheta})$. Since
$$
 Tv_{\rho}=T(\rho\cdot v_1)=\rho\cdot Tv_1
$$
for all $\rho\in\text{\rm Gal}(K/F)$, we have $Tv_1\neq 0$. If 
$(Tv_1)(\gamma)\neq 0$ for a $\gamma\in\text{\rm Gal}(K/F)$, then we
have
\begin{align*}
 \widetilde\vartheta(x^{\gamma})\cdot(Tv_1)(\gamma)
 &=(x\cdot Tv_1)(\gamma)=T(x\cdot v_1)(\gamma)\\
 &=(T(\widetilde\vartheta(x)\cdot v_1))(\gamma)
  =\widetilde\vartheta(x)\cdot(Tv_1)(\gamma),
\end{align*}
and hence 
$\widetilde\vartheta(x^{\gamma})=\widetilde\vartheta(x)$ 
for all $x\in K^{\times}$. Then $\gamma=1$ by Proposition 
\ref{prop:conductor-and-conjugate-triviality-of-tilde-vartheta}. This
means $Tv_1=c\cdot v_1$ with a $c\in\Bbb C^{\times}$. Then
$$
 Tv_{\rho}=\rho\cdot(Tv_1)=c\cdot v_{\rho}
$$
for all $\rho\in\text{\rm Gal}(K/F)$, and hence $T$ is a homothety.
\end{proof}

\begin{rem}\label{remark:condition-for-irreducibility-of-induced-rep}
The proof of Proposition
\ref{prop:induced-representation-of-vartheta-is-irreducible} shows
that the induced representation 
$\text{\rm Ind}_{K^{\times}}^{W_{K/F}}\widetilde\vartheta$ is
irreducible if $\widetilde\vartheta$ is a character of $K^{\times}$
such that 
$\widetilde\vartheta(x^{\sigma})=\widetilde\vartheta(x)$ for all 
$x\in K^{\times}$ with $\sigma\in\text{\rm Gal}(K/F)$ implies
$\sigma=1$. 
\end{rem}


\section{Formal degree conjecture}
\label{sec:formal-degree-conjecture}
In this section, we will assume that $K/F$ is a tamely ramified Galois
extension such that the degree $(K:F)=n$ is prime to $p$, and will
keep the notations of preceding sections. 

\subsection{$\gamma$-factor of adjoint representation}
\label{subsec:gamma-factor-of-adjoint-representation}
The admissible representation of the Weil-Deligne group 
$W_F\times SL_2(\Bbb C)$ to $PGL_n(\Bbb C)$
corresponding to the triple $(\varphi,PGL_n(\Bbb C),0)$ as presented
in the appendix \ref{subsec:weil-deligne-group} is 
\begin{equation}
 W_F\times SL_2(\Bbb C)
 \xrightarrow{\text{\rm projection}} W_F
 \xrightarrow{\varphi} PGL_n(\Bbb C)
\label{eq:candidate-of-langlands-arthur-parameter}
\end{equation}
which is denoted also by $\varphi$. We will also denote by $\varphi_1$
the representation
$$
 W_F\times SL_2(\Bbb C)
 \xrightarrow{\text{\rm projection}} W_F
 \xrightarrow{\varphi_1} GL_n(\Bbb C).
$$
Then, as a complex linear representation of $W_F\times SL_2(\Bbb C)$, 
the adjoint representation of $PGL_n(\Bbb C)$ on its Lie algebra
composed with the $\varphi$ is equivalent to the
adjoint representation of $GL_n(\Bbb C)$ on 
$$
 \widehat{\frak g}=\frak{sl}_n(\Bbb C)
 =\{X\in M_n(\Bbb C)\mid \text{\rm tr}(X)=0\}
$$
composed with the $\varphi_1$. 

The purpose of this subsection is to determine the $\gamma$-factor 
$\gamma(\varphi,\text{\rm Ad},\psi,d(x),s)$, as explained in the
appendix \ref{subsec:weil-deligne-group}. 

Let us use the notation of \eqref{eq:generator-of-tame-galois-group}
$$
 \text{\rm Gal}(K/F)=\langle\delta,\rho\rangle,
$$
that is, $\text{\rm Gal}(K/K_0)=\langle\delta\rangle$ with the maximal
unramified subextension $K_0/F$ of $K/F$ and 
$\rho|_{K_0}\in\text{\rm Gal}(K_0/F)$ is the inverse of the Frobenius
automorphism. By the canonical surjection
$$
 W_F\to W_F/\overline{[W_K,W_K]}=W_{K/F}
        =\text{\rm Gal}(K/F){\ltimes}_{\alpha_{K/F}}K^{\times}
        \subset\text{\rm Gal}(K^{\text{\rm ab}}/F),
$$
$I_F=\text{\rm Gal}(F^{\text{\rm alg}}/F^{\text{\rm ur}})\subset W_F$
     is mapped onto 
$$
 \text{\rm Gal}(K/K_0){\ltimes}_{\alpha_{K/F}}O_K^{\times}
 =\text{\rm Gal}(K^{\text{\rm ab}}/F^{\text{\rm ur}}).
$$

Put $v_{ij}=v_{\delta^{i-1}\rho^{j-1}}\in V_{\vartheta}$ 
($1\leq i\leq e,1\leq j\leq f$), and identify $GL_{\Bbb C}(V_{\vartheta})$
with $GL_n(\Bbb C)$ by the $\Bbb C$-basis 
$\{v_{ij}\}_{1\leq i\leq e, 1\leq j\leq f}$ of 
$V_{\vartheta}
 =\text{\rm Ind}_{K^{\times}}^{W_{K/F}}\widetilde\vartheta$. Then the
 action \eqref{eq:action-of-relative-weil-group-on-induced-rep} gives
\begin{equation}
 \varphi_1(\delta)=\begin{bmatrix}
                  J_1&   &      &       \\
                     &J_2&      &       \\
                     &   &\ddots&       \\
                     &   &      &J_f
                 \end{bmatrix}
 \in GL_n(\Bbb C)
\label{eq:explicit-action-of-theta-tau-0}
\end{equation}
with 
$$
 J_j=\begin{bmatrix}
          0  &   0  &\cdots&  0      &  1   \\
          1  &   0  &      &         &  0   \\
             &   1  &\ddots&         &\vdots\\
             &      &\ddots&  0      &  0   \\
             &      &      &  1      &  0
     \end{bmatrix}
     \begin{bmatrix}
      a_{1j}&      &      &         \\
            &a_{2j}&      &         \\
            &      &\ddots&         \\
            &      &      &a_{e,j}
     \end{bmatrix}
$$
($a_{ij}
  =\theta\left(\alpha_{K/F}(\delta,\delta^{i-1}\rho^{j-1})\right)$). 
Since 
the action of $O_K^{\times}$ on $V_{\vartheta}$ is diagonal, the space 
$\widehat{\frak g}^{I_F}$ of the 
$\text{\rm Ad}\circ\varphi(I_F)$-fixed vectors is  
$$
 \left\{\begin{bmatrix}
          a_11_e&      &      &      \\
                &a_21_e&      &      \\
                &      &\ddots&      \\
                &      &      &a_f1_e
         \end{bmatrix}\biggm| 
          \begin{array}{l}
           a_i\in\Bbb C,\\
           a_1+a_2+\cdots+a_f=0
          \end{array}\right\}.
$$
A $\Bbb C$-basis of it is given by
\begin{equation}
 X_1=\begin{bmatrix}
      P&   &      & \\
       &0_e&      & \\
       &   &\ddots& \\
       &   &      &0_e
     \end{bmatrix},\;
 X_2=\begin{bmatrix}
       0_e& &      & \\
          &P&      & \\
          & &\ddots& \\
          & &      &0_e
      \end{bmatrix},\cdots,
  X_{f-1}=\begin{bmatrix}
           0_e&      &   &  \\
              &\ddots&   &  \\
              &      &0_e&  \\
              &      &   &P
          \end{bmatrix}
\label{eq:baisi-of-ad-varphi-i-f-fixed-subspec-of-widehat-g}
\end{equation}
with $P=\begin{bmatrix}
         1_e&   \\
            &-1_e
        \end{bmatrix}$.
Since $\rho\delta\rho^{-1}=\delta^l$ with 
$0<l<e$ and $ql\equiv 1\npmod{e}$, we have
$$
 \rho\delta^{i-1}\rho^{j-1}
 =\begin{cases}
   \delta^{l(i-1)}\rho^j&:1\leq j<f,\\
   \delta^{l(i-1)+m}&:j=f.
  \end{cases}.
$$
Put 
$$
 l(i-1)\equiv i^{\prime}-1\npmod{e},
 \quad
 l(i-1)+m\equiv i^{\prime\prime}-1\npmod{e}
 \quad
 (1\leq i^{\prime}, i^{\prime\prime}\leq e)
$$ 
for $0\leq i<e$ and let 
$[l]_e, [l,m]_e\in GL_e(\Bbb Z)$ be the permutation matrix associated
respectively with the element 
$$
 \begin{pmatrix}
  1&1&2&\cdots&e\\
  1^{\prime}&1^{\prime}&2^{\prime}&\cdots&e^{\prime}
 \end{pmatrix},
 \quad
 \begin{pmatrix}
  1&1&2&\cdots&e\\
  1^{\prime\prime}&1^{\prime\prime}&2^{\prime\prime}
                                    &\cdots&e^{\prime\prime}
 \end{pmatrix}
$$
of the symmetric group of degree $e$. Then we have
$$
 \varphi_1(\rho)=\overline{
                   \begin{bmatrix}
                    0  & 0 &\cdots&   0   &I_f\\
                    I_1& 0 &      &       &   0   \\
                       &I_2&\ddots&       & \vdots\\
                       &   &\ddots&   0   &   0   \\
                       &   &      &I_{f-1}&   0
                   \end{bmatrix}}
$$
with 
$$
 I_j=P_j\cdot\begin{bmatrix}
              b_{1j}&      &      &         \\
                    &b_{2j}&      &         \\
                    &      &\ddots&         \\
                    &      &      &b_{e,j}
             \end{bmatrix},
 \quad
 P_j=\begin{cases}
      [l]_e&:1\leq j<f,\\
      [l,m]_e&:j=f
     \end{cases}
$$
($b_{ij}
  =\theta\left(\alpha_{K/F}(\rho,\delta^{i-1}\rho^{j-1})\right)$). So
 the representation matrix of 
$\text{\rm Ad}\circ\varphi(\widetilde{\text{\rm Fr}})$ on 
${\widehat{\frak{g}}}^{I_F}$ with respect
 to the basis 
\eqref{eq:baisi-of-ad-varphi-i-f-fixed-subspec-of-widehat-g} is
$$
  \begin{bmatrix}
     -1  &   1  &  0   &\cdots&  0   \\
     -1  &   0  &  1   &\cdots&  0   \\
   \vdots&\vdots&\ddots&\ddots&\vdots\\
     -1  &   0  &      &   0  &  1   \\
     -1  &   0  &\cdots&   0  &  0
  \end{bmatrix}.
$$
Hence we have
\begin{align*}
 L(\varphi,\text{\rm Ad},s)
 &=\det\left(
   1-q^{-s}\text{\rm Ad}\circ\varphi(\widetilde{\text{\rm Fr}})
    |_{\frak{g}^{\text{\rm Ad}\circ\varphi(\widetilde{\text{\rm Fr}})}}
      \right)^{-1}\\
 &=\left(1+q^{-s}+q^{-2s}+\cdots+q^{-(f-1)s}\right)^{-1}
\end{align*}
and 
\begin{equation}
 \frac{L(\varphi,\text{\rm Ad},1)}
      {L(\varphi,\text{\rm Ad},0)}
 =f\cdot\frac{1-q^{-1}}
            {1-q^{-f}}.
\label{eq:ratio-of-l-function-of-tamely-induced-rep-of-weil-group}
\end{equation}
Let us denote by $K^{(k)}=K_{\varpi_K,k}$ ($k=1,2,\cdots$) the
field of $\varpi_K^k$-th division points of Lubin-Tate theory over
$K$. Then we have an isomorphism 
$$
 \delta_K:1+\frak{p}_K^k\,\tilde{\to}\,
  \text{\rm Gal}(K^{\text{\rm ab}}/K^{(k)}K^{\text{\rm ur}}).
$$
Because the character 
$\widetilde\vartheta:K^{\times}\to\Bbb C^{\times}$ comes from
a character of 
$$
 G_{\beta}(O_F/\frak{p}^r)\subset\left(O_K/\frak{p}_K^{er}\right)^{\times},
$$
$\varphi$ is trivial on 
$\text{\rm Gal}(K^{\text{\rm ab}}/K^{(er)}K^{\text{\rm ur}})$. 
Note that 
$K^{(er)}K^{\text{\rm ur}}=K^{(er)}F^{\text{\rm ur}}$ 
is a finite extension of $F^{\text{\rm ur}}$. If us use the upper
numbering 
$$
 V^s=V_t(K^{(er)}F^{\text{\rm ur}}/F^{\text{\rm ur}})
$$
of the higher ramification group, where $t\mapsto s$
is the inverse of Hasse function whose graph is
\begin{center}
\unitlength 0.1in
\begin{picture}( 52.8000, 29.6500)( 14.0000,-42.1000)
%
\special{pn 8}%
\special{pa 2000 3610}%
\special{pa 2610 3210}%
\special{fp}%
%
\special{pn 8}%
\special{pa 2620 3190}%
\special{pa 3610 2800}%
\special{fp}%
\special{pa 3610 2800}%
\special{pa 3610 2800}%
\special{fp}%
%
\special{pn 8}%
\special{pa 3610 2800}%
\special{pa 5390 2400}%
\special{fp}%
%
\special{pn 8}%
\special{pa 2010 3610}%
\special{pa 1610 3990}%
\special{fp}%
%
\special{pn 8}%
\special{pa 1610 3990}%
\special{pa 1610 3610}%
\special{dt 0.045}%
%
\special{pn 8}%
\special{pa 1610 3980}%
\special{pa 2010 3970}%
\special{dt 0.045}%
%
\special{pn 8}%
\special{pa 2620 3200}%
\special{pa 2620 3600}%
\special{dt 0.045}%
%
\special{pn 8}%
\special{pa 2620 3210}%
\special{pa 2010 3200}%
\special{dt 0.045}%
%
\special{pn 8}%
\special{pa 2010 2800}%
\special{pa 3620 2800}%
\special{dt 0.045}%
\special{pa 3620 3610}%
\special{pa 3620 3610}%
\special{dt 0.045}%
%
\special{pn 8}%
\special{pa 2010 2410}%
\special{pa 5400 2410}%
\special{dt 0.045}%
\special{pa 5330 2410}%
\special{pa 5330 3610}%
\special{dt 0.045}%
\special{pa 5330 3620}%
\special{pa 5330 3580}%
\special{dt 0.045}%
%
\special{pn 8}%
\special{pa 2000 4210}%
\special{pa 2000 1470}%
\special{fp}%
\special{sh 1}%
\special{pa 2000 1470}%
\special{pa 1980 1538}%
\special{pa 2000 1524}%
\special{pa 2020 1538}%
\special{pa 2000 1470}%
\special{fp}%
\put(18.8000,-32.0000){\makebox(0,0){1}}%
\put(18.8000,-28.0000){\makebox(0,0){2}}%
\put(18.8000,-24.1000){\makebox(0,0){3}}%
\put(21.5000,-39.7000){\makebox(0,0){-1}}%
\put(16.1000,-34.8000){\makebox(0,0){-1}}%
\put(26.2000,-37.7000){\makebox(0,0){$q^f-1$}}%
\put(35.9000,-37.9000){\makebox(0,0){$q^{2f}-1$}}%
\put(53.3000,-37.8000){\makebox(0,0){$q^{3f}-1$}}%
\put(20.0000,-13.1000){\makebox(0,0){$s$}}%
%
\special{pn 8}%
\special{pa 1400 3610}%
\special{pa 6590 3610}%
\special{fp}%
\special{sh 1}%
\special{pa 6590 3610}%
\special{pa 6524 3590}%
\special{pa 6538 3610}%
\special{pa 6524 3630}%
\special{pa 6590 3610}%
\special{fp}%
%
\special{pn 8}%
\special{pa 5330 2420}%
\special{pa 6460 2270}%
\special{fp}%
\special{pa 6460 2270}%
\special{pa 6450 2250}%
\special{fp}%
\put(67.0000,-36.1000){\makebox(0,0){t}}%
%
\special{pn 8}%
\special{pa 3570 2800}%
\special{pa 3570 3610}%
\special{dt 0.045}%
\special{pa 3570 3610}%
\special{pa 3570 3610}%
\special{dt 0.045}%
\end{picture}%

\end{center}
then $\delta_K$ induces the isomorphism
$$
 (1+\frak{p}_K^k)/(1+\frak{p}_K^{er})\,\tilde{\to}\,
 \text{\rm Gal}(K^{(er)}K^{\text{\rm ur}}/K^{(k)}K^{\text{\rm ur}})
 =V^s
$$
for $k-1<s\leq k$ ($k=1,2,\cdots$), and hence, for 
$V_t=V_t(K^{(er)}F^{\text{\rm ur}}/F^{\text{\rm ur}})$, we have
$$
 |V_t|=\begin{cases}
        e\cdot q^{nr}(1-q^{-f})&:t=0,\\
        q^{nr-fk}              &:q^{f(k-1)}-1<t\leq q^{fk}-1.
       \end{cases}
$$
The explicit actions 
\eqref{eq:explicit-action-of-theta-tau-0} and 
\eqref{eq:action-of-relative-weil-group-on-induced-rep} combined with 
Proposition
\ref{prop:conductor-and-conjugate-triviality-of-tilde-vartheta}
 shows that the space $\widehat{\frak g}^{V_t}$ 
of the $\text{\rm Ad}\circ\varphi(V_t)$-fixed
vectors in $\widehat{\frak g}$ is
$$
 \left\{\begin{bmatrix}
         a_11_e&      &      &      \\
               &a_21_e&      &      \\
               &      &\ddots&      \\
               &      &      &a_f1_e
        \end{bmatrix}\biggm|
           \begin{array}{l}
            a_i\in\Bbb C,\\
            a_1+a_2+\cdots+a_f=0
           \end{array}\right\}
$$
if $t=0$, 
$$
   \left\{\begin{bmatrix}
           a_1&   &      &   \\
              &a_2&      &   \\
              &   &\ddots&   \\
              &   &      &a_n
          \end{bmatrix}\biggm|
           \begin{array}{l}
            a_i\in\Bbb C,\\
            a_1+a_2+\cdots+a_n=0
           \end{array}\right\}
$$
if $0<t\leq q^{f\{e(r-1)-1\}}-1$, 
$$
    \left\{\begin{bmatrix}
           A_1&   &      &   \\
              &A_2&      &   \\
              &   &\ddots&   \\
              &   &      &A_f
          \end{bmatrix}\biggm|
           \begin{array}{l}
            A_i\in M_e(\Bbb C),\\
            \text{\rm tr}(A_1+A_2+\cdots+A_n)=0
           \end{array}\right\}
$$
if $q^{f\{e(r-1)-1\}}-1<t\leq q^{fe(r-1)}-1$ and 
$\widehat{\frak g}$ if $q^{fe(r-1)}-1<t\leq q^{fer}-1$. So we have
$$
 \dim_{\Bbb C}{\widehat{\frak g}}^{V_t}
  =\begin{cases}
   f-1   &:t=0,\\
   n-1   &:0<t\leq q^{f\{e(r-1)-1\}}-1,\\
   fe^2-1&:q^{f\{e(r-1)-1\}}-1<t\leq q^{fe(r-1)}-1,\\
   n^2-1 &:q^{fe(r-1)}-1<t\leq q^{fer}-1
  \end{cases}.
$$
Hence we have
\begin{align*}
 \sum_{t=0}^{\infty}&(V_0:V_t)^{-1}
       \dim_{\Bbb C}\left(\widehat{\frak g}/{\widehat{\frak g}}^{V_t}
                          \right)\\
 &=n^2-f+(n^2-n)\cdot\frac 1e\cdot\{e(r-1)-1\}+(n^2-fe^2)\cdot\frac 1e\\
 &=rn(n-1).
\end{align*}
Combined with
\eqref{eq:ratio-of-l-function-of-tamely-induced-rep-of-weil-group}, we
have
\begin{equation}
 \gamma(\varphi,\text{\rm Ad},\psi,d(x),0)
 =w(\text{\rm Ad}\circ\varphi)\cdot
  q^{rn(n-1)/2}\cdot f\cdot\frac{1-q^{-1}}
                                {1-q^{-f}}
\label{eq:explicit-value-of-gamma-factor}
\end{equation}
where $d(x)$ is the Haar measure on $F$ such that the volume of $O_F$
is one. 
The explicit value of the root number $w(\text{\rm Ad}\circ\varphi)$
will be given in the subsection 
\ref{subsec:root-number-of-adjoint-representation}.

\subsection{$\gamma$-factor of principal parameter}
\label{subsec:gamma-factor-of-principal-parameter}
Let $\text{\rm Sym}_{n-1}$ be the symmetric tensor representation of
$SL_2(\Bbb C)$ on the space of the complex coefficient homogeneous
polynomials of $X,Y$ of degree $n-1$, which gives the group
homomorphism 
$$
 \text{\rm Sym}_{n-1}:SL_2(\Bbb C)\to GL_n(\Bbb C)
$$
with respect to the $\Bbb C$-basis 
$\left\{v_k=\frac 1{(k-1)!}X^{n-k}Y^{k-1}\right\}_{k=1,2,\cdots,n}$. 
Then 
$$
 d\text{\rm Sym}_{n-1}\begin{bmatrix}
                       0&1\\
                       0&0
                      \end{bmatrix}=N_0
 =\begin{bmatrix}
   0&1& &      & \\
    &0&1&      & \\
    & &\ddots&\ddots& \\
    & & &0&1\\
    & & &      &0
   \end{bmatrix}
$$
is the nilpotent element in $\frak{pgl}_n(\Bbb C)=\frak{sl}_n(\Bbb C)$
associated with the standard {\it \'epinglage} of the standard root
system of $\frak{sl}_n(\Bbb C)$. Then 
\begin{equation}
 \varphi_0:W_F\times SL_2(\Bbb C)
           \xrightarrow{\text{\rm proj.}}SL_2(\Bbb C)
           \xrightarrow{\text{\rm Sym}_{n-1}}GL_n(\Bbb C)
           \xrightarrow{\text{\rm canonical}}PGL_n(\Bbb C)
\label{eq:candidate-of-principal-parameter}
\end{equation}
is a representation of Weil-Deligne group with the associated triplet 
$(\rho_0,PGL_n(\Bbb C),N_0)$ such that
$\rho_0|_{I_F}$ is trivial and
$$
 \rho_0(\widetilde{\text{\rm Fr}})
 =\overline{
   \begin{bmatrix}
    q^{-(n-1)/2}&            &      &           &           \\
                &q^{-(n-3)/2}&      &           &           \\
                &            &\ddots&           &           \\
                &            &      &q^{(n-3)/2}&           \\
                &            &      &           &q^{(n-1)/2}
   \end{bmatrix}}
 \in PGL_n(\Bbb C).
$$
Let $\text{\rm Ad}:PGL_n(\Bbb C)\to GL_{\Bbb C}(\widehat{\frak g})$ be 
the adjoint representation of $PGL_n(\Bbb C)$ on 
$\widehat{\frak g}=\frak{sl}_n(\Bbb C)$. Then 
\begin{equation}
 \left\{N_0^k\mid k=1,2,\cdots,n-1\right\}
\label{eq:c-basis-of-widehat-g-n-0-for-sl-n}
\end{equation}
is the $\Bbb C$-basis of 
$$
 \widehat{\frak g}_{N_0}
 =\{X\in\widehat{\frak g}\mid[X,N_0]=0\}.
$$ 
The representation
matrix of $\text{\rm Ad}\circ\rho_0(\widetilde{\text{\rm Fr}})$ on 
$\widehat{\frak g}_{N_0}$ with respect to the $\Bbb C$-basis 
\eqref{eq:c-basis-of-widehat-g-n-0-for-sl-n} is
$$
  \begin{bmatrix}
   q^{-1}&      &      &          \\
         &q^{-2}&      &          \\
         &      &\ddots&          \\
         &      &      &q^{-(n-1)}
  \end{bmatrix}
$$
so that we have
$$
 L(\varphi_0,\text{\rm Ad},s)
 =\prod_{k=1}^{n-1}\left(1-q^{-(s+k)}\right)^{-1}.
$$
On the other hand \cite[p.448]{Gross-Reeder2010} shows 
$$
 \varepsilon(\varphi_0,\text{\rm Ad},\psi,d(x),0)=q^{n(n-1)/2}
$$
where $d(x)$ is the Haar measure on $F$ such that the volume of $O_F$
is one. 
Since the symmetric tensor representation $\text{\rm Sym}_{n-1}$ is
self-dual, we have
\begin{equation}
 \gamma(\varphi_0,\text{\rm Ad},\psi,d(x),0)
 =q^{n(n-1)/2}\cdot\frac{1-q^{-1}}
                        {1-q^{-n}}.
\label{eq:gamma-factor-of-principal-parametor}
\end{equation}

\subsection{Verification of formal degree conjecture}
\label{subse:verification-of-formal-degree-conjecture}
Let $\mathcal{A}_{\varphi}$ be the centralizer of $\text{\rm Im}\,\varphi$
in $PGL_n(\Bbb C)$. 
Let us denote by $\mathcal{A}_{\widetilde\vartheta}$ the set of the group homomorphism 
$\lambda:W_{K/F}\to\Bbb C^{\times}$ whose restriction to $K^{\times}$
is a character $x\mapsto\widetilde\vartheta(x^{\tau-1})$ with some 
$\tau\in\text{\rm Gal}(K/F)$ which is uniquely determined by
  $\lambda$ due to Proposition 
\ref{prop:conductor-and-conjugate-triviality-of-tilde-vartheta}. 
Let us call it associated with $\lambda$. 
If $\tau\in\text{\rm Gal}(K/F)$ is associated with 
$\lambda\in\mathcal{A}_{\widetilde\vartheta}$, then we have 
\begin{equation}
 \widetilde\vartheta(x^{\sigma(\tau-1)})=\widetilde\vartheta(x^{\tau-1})
\label{eq:theta-x-tau-minus-1-is-stable-under-galois-action}
\end{equation}
for all $x\in K^{\times}$ and 
$\sigma,\tau\in\text{\rm Gal}(K/F)$, because 
$$
 \lambda(x^{\sigma})=\lambda((\sigma,1)^{-1}(1,x)(\sigma,1))
 =\lambda(x).
$$
This implies that $\mathcal{A}_{\widetilde\vartheta}$ is in fact a subgroup of the
character group of $W_{K/F}$. 

Take a $T\npmod{\Bbb C^{\times}}\in A_{\varphi}$ with 
$T\in GL_{\Bbb C}(V_{\vartheta})=GL_n(\Bbb C)$. Then
we have a character 
$$
 \lambda:W_{K/F}\to\Bbb C^{\times}
$$
such that $gT=\lambda(g)Tg$ for all $g\in W_{K/F}$. If 
$(Tv_1)(\tau)\neq 0$ with $\tau\in\text{\rm Gal}(K/F)$ then 
$x\cdot T(v_1)=\lambda(x)T(x\cdot v_1)$ for
$x\in K^{\times}$ implies 
$\widetilde\vartheta(x^{\tau})=\lambda(x)\widetilde\vartheta(x)$
for all $x\in K^{\times}$. So $(Tv_1)(\tau^{\prime})\neq 0$ for some
$\tau^{\prime}\in\text{\rm Gal}(K/F)$ implies that 
$\widetilde\vartheta(x^{\tau})=\widetilde\vartheta(x^{\tau^{\prime}})$
for all $x\in K^{\times}$ and hence $\tau^{\prime}=\tau$, due to
Proposition
\ref{prop:conductor-and-conjugate-triviality-of-tilde-vartheta}. 
Hence we have $Tv_1=c\cdot v_{\tau}$
with $c\in\Bbb C^{\times}$. Then we have
$$
 Tv_{\sigma}=c\cdot\lambda(\sigma)^{-1}\sigma\cdot v_{\tau}
 =c\cdot\lambda(\sigma)^{-1}\widetilde\vartheta\left(\alpha_{K/F}(\sigma,\tau)\right)
  \cdot v_{\sigma\tau}
$$
for all $\sigma\in\text{\rm Gal}(K/F)$. We have 

\begin{prop}
\label{prop:centralizer-of-image-of-varphi-in-pgl(v)-general-case}
$\overline T\mapsto\lambda$ gives a group isomorphism of
  $A_{\varphi}$ onto $\mathcal{A}_{\widetilde\vartheta}$.
\end{prop}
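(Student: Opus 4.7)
The plan is to verify the four ingredients of a group isomorphism: well-definedness into $\mathcal{A}_{\widetilde\vartheta}$, the homomorphism property, injectivity, and surjectivity. The discussion preceding the statement already carries out the first step. Namely, for $\overline T\in A_\varphi$ with lift $T\in GL_n(\Bbb C)$, the centralizer condition $\varphi(g)\overline T=\overline T\varphi(g)$ in $PGL_n(\Bbb C)$ forces $gT=\lambda(g)Tg$ for a unique scalar $\lambda(g)\in\Bbb C^\times$. Multiplicativity of $g\mapsto\lambda(g)$ is automatic from associativity, and the explicit computation $Tv_1=c\cdot v_\tau$ combined with $x\cdot Tv_1=\lambda(x)\,T(x\cdot v_1)$ yields $\lambda(x)=\widetilde\vartheta(x^{\tau-1})$ for $x\in K^\times$, so $\lambda\in\mathcal{A}_{\widetilde\vartheta}$ with associated element $\tau$. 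Rescaling $T$ by a constant leaves $\lambda$ unchanged, so $\lambda$ depends only on the class $\overline T$.

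The homomorphism property is a one-line check: if $T_i$ lifts $\overline{T_i}$ and yields $\lambda_i$ ($i=1,2$), then
\[
 g\,T_1T_2=\lambda_1(g)\,T_1\,gT_2=\lambda_1(g)\lambda_2(g)\,T_1T_2\,g,
\]
so $\overline{T_1T_2}\mapsto\lambda_1\lambda_2$. Injectivity is immediate from Proposition \ref{prop:induced-representation-of-vartheta-is-irreducible}: if $\lambda\equiv 1$ then $T$ commutes with the $W_{K/F}$-action on $V_\vartheta$, hence by Schur's lemma $T\in\Bbb C\cdot\text{\rm id}$, and $\overline T=1$ in $A_\varphi$.

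The remaining and only substantive step is surjectivity. Given $\lambda\in\mathcal{A}_{\widetilde\vartheta}$ with associated $\tau\in\text{\rm Gal}(K/F)$, I would take the formula derived in the paragraph preceding the proposition as a definition, setting
\[
 Tv_\sigma=\lambda(\sigma)^{-1}\widetilde\vartheta(\alpha_{K/F}(\sigma,\tau))\,v_{\sigma\tau}
 \qquad(\sigma\in\text{\rm Gal}(K/F)).
\]
Since $\sigma\mapsto\sigma\tau$ is a bijection of $\text{\rm Gal}(K/F)$, the endomorphism $T$ is invertible. It then suffices to verify $gT=\lambda(g)Tg$ separately for the two classes of generators $g=(1,x)$ and $g=(\sigma,1)$ of $W_{K/F}$. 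The case $g=(1,x)$ collapses to the identity $\lambda(x)=\widetilde\vartheta(x^{\tau-1})$ combined with \eqref{eq:theta-x-tau-minus-1-is-stable-under-galois-action}, so that $\widetilde\vartheta(x^{\sigma\tau})=\lambda(x)\widetilde\vartheta(x^{\sigma})$ for every $\sigma$. The case $g=(\sigma,1)$ is a bookkeeping identity in the $2$-cocycle $\alpha_{K/F}$, and this is the main, though routine, obstacle; it is forced by the fact that $\lambda$ is a genuine character of $W_{K/F}$, so its values on products $(\rho,1)(\sigma,1)=(\rho\sigma,\alpha_{K/F}(\rho,\sigma))$ are constrained in a way that matches exactly the cocycle identity needed. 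Once this check is performed, $T$ centralizes $\varphi$ modulo scalars and $\overline T\mapsto\lambda$, completing the isomorphism.
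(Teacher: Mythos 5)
Your proposal is correct and follows essentially the same route as the paper: injectivity (and the homomorphism property) from the irreducibility of $\text{\rm Ind}_{K^{\times}}^{W_{K/F}}\widetilde\vartheta$ via Schur's lemma, and surjectivity by defining $T$ on the basis $\{v_{\sigma}\}$ by exactly the formula $Tv_{\sigma}=\lambda(\sigma)^{-1}\widetilde\vartheta(\alpha_{K/F}(\sigma,\tau))v_{\sigma\tau}$ and checking $gT=\lambda(g)Tg$. You are, if anything, slightly more explicit than the paper about the cocycle bookkeeping that the paper simply asserts.
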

\begin{proof}
It is clear that $\overline T\mapsto\lambda$ is injective group
homomorphism, because $\text{\rm Ind}_{K^{\times}}^{W_{K/F}}\widetilde\vartheta$ is
irreducible. 
Take any $\lambda\in\mathcal{A}_{\widetilde\vartheta}$ and the 
$\tau\in\text{\rm Gal}(K/F)$ associated with it. 
Define a $T\in GL_{\Bbb C}(V)$ by
$$
 Tv_{\sigma}
 =\lambda(\sigma)^{-1}\widetilde\vartheta\left(\alpha_{K/F}(\sigma,\tau)\right)
   \cdot v_{\sigma\tau}
$$
for all $\sigma\in\text{\rm Gal}(K/F)$. Then we have 
$gT=\lambda(g)\cdot Tg$ for all $g\in W_{K/F}$. 
\end{proof}

We have in fact

\begin{prop}\label{prop:a-theta-is-character-group-of-gal-k-over-f}
$\mathcal{A}_{\widetilde\vartheta}$ is equal to the group of the
character $\lambda$ of $W_{K/F}$ which is trivial on $K^{\times}$. In 
particular
\begin{equation}
 |\mathcal{A}_{\widetilde\vartheta}|
 =(O_K:N_{K/F}(O_K^{\times}))\cdot f.
\label{eq:order-of-centralizer-in-pgl}
\end{equation}
\end{prop}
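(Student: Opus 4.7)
The plan is to show that for every $\lambda\in\mathcal A_{\widetilde\vartheta}$ the associated element $\tau\in\Gamma=\mathrm{Gal}(K/F)$ must equal $1$; both assertions of the proposition follow. The inclusion $\{\lambda:\lambda|_{K^\times}=1\}\subset\mathcal A_{\widetilde\vartheta}$ is immediate via $\tau=1$. For the reverse inclusion, local class field theory applied to $K\subset F$ shows that every continuous character $\lambda$ of $W_{K/F}$ factors through the natural surjection $W_{K/F}\twoheadrightarrow W_F^{\mathrm{ab}}\cong F^\times$, and under this identification the embedding $K^\times=W_K^{\mathrm{ab}}\hookrightarrow W_F^{\mathrm{ab}}$ becomes the norm map $N_{K/F}$. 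Thus $\lambda|_{K^\times}=\mu\circ N_{K/F}$ for some continuous character $\mu$ of $F^\times$, so the defining identity reads
\[
 \mu(N_{K/F}(x))=\widetilde\vartheta(x^{\tau-1}),\qquad x\in K^\times.
\]
Specializing to $x\in U_{K/F}\cap(1+\frak p_K^2)$ gives $N_{K/F}(x)=1$, hence $\widetilde\vartheta(x^{\tau-1})=1$; using $\widetilde\vartheta|_{U_{K/F}}=c\cdot\theta$ and the vanishing of $c$ on $U_{K/F}\cap(1+\frak p_K^2)$ (Proposition \ref{prop:c-is-trivial-mod-p-square}), this becomes $\theta(x^{\tau-1})=1$.

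Next I will translate this into a condition on $z:=\beta^{\tau^{-1}}-\beta$. Parametrising $x=1+\varpi_F^l y$ with $y\in V_0:=\{y\in O_K:T_{K/F}(y)=0\}$, the explicit formula for $\theta$ from subsection \ref{subsec:symplectic-space-associated-with-tamely-ramified-ext} together with the $\Gamma$-invariance $T_{K/F}(y^\tau\beta)=T_{K/F}(y\beta^{\tau^{-1}})$ yields $\theta(x^{\tau-1})=\psi(\varpi_F^{-l'}T_{K/F}(yz))$, and triviality for all $y\in V_0$ amounts to $T_{K/F}(V_0\,z)\subset\frak p_F^{l'}$. Exploiting the splitting $O_K=O_F\cdot 1\oplus V_0$ (valid since $n\in O_F^\times$ by the standing assumption $p\nmid n$) together with the description $\mathfrak D_{K/F}^{-1}=\frak p_K^{1-e}$ of the inverse different in the tame case, a duality computation identifies the $O_F$-dual of $V_0$ inside $\ker T_{K/F}$ as $\frak p_K^{1-e}\cap\ker T_{K/F}$, so the condition forces $v_K(z)\geq el'-e+1$.

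Assuming $l'=\lfloor r/2\rfloor\geq 2(e-1)$, we have $el'-e+1\geq 2$ when $e\geq 2$ and $el'-e+1=l'\geq 1$ when $e=1$. Using Shintani's expansion $\beta=\sum_{i=0}^{e-1}a_i\varpi_K^i$ (Proposition \ref{prop:shintani-lemma-on-generator-of-ok-over-of}), I verify $v_K(z)\leq 1$ for every $\tau\neq 1$: if $\tau\in\mathrm{Gal}(K/K_0)\setminus\{1\}$, then $\tau^{-1}$ fixes each $a_i$ and acts on $\varpi_K$ via a nontrivial $e$-th root of unity $\zeta^{-1}\in O_{K_0}^\times$, so $z\equiv a_1(\zeta^{-1}-1)\varpi_K\pmod{\frak p_K^2}$ has $v_K(z)=1$ (both $a_1$ and $\zeta^{-1}-1$ being units); if $\tau\notin\mathrm{Gal}(K/K_0)$, then $\tau^{-1}|_{K_0}$ is a nontrivial power of Frobenius, and the Shintani condition $a_0^{\mathrm{Fr}}\not\equiv a_0\pmod{\frak p_{K_0}}$ combined with $\bar a_0$ generating $\Bbb K_0/\Bbb F$ forces $v_K(z)=0$. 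In either case $v_K(z)\leq 1<el'-e+1$, contradicting the bound, so $\tau=1$.

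The cardinality then follows: the characters of $W_{K/F}/K^\times=\mathrm{Gal}(K/F)$ number $n=ef$, matching $(O_F^\times:N_{K/F}(O_K^\times))\cdot f=e\cdot f$ by the standard tame computation $(O_F^\times:N_{K/F}(O_K^\times))=e$ (the tame totally ramified step produces an $e$-th power map on the residue unit group of index $e$, while the unramified step is surjective on units). The main obstacle is the duality step identifying the $O_F$-dual of $V_0$ inside $\ker T_{K/F}$ with $\frak p_K^{1-e}\cap\ker T_{K/F}$; this requires carefully combining the non-degeneracy of the trace pairing on $O_K$, the splitting $O_K=O_F\cdot 1\oplus V_0$, and the explicit form of the inverse different in the tame case.
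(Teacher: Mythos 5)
Your argument for the first assertion is correct and takes a genuinely different route from the paper: you invoke the class–field–theoretic fact that a character of $W_{K/F}$ factors through $W_{K/F}^{\mathrm{ab}}\cong F^{\times}$ with $K^{\times}\hookrightarrow W_{K/F}$ becoming $N_{K/F}$, so $\lambda|_{K^\times}$ kills the norm-one group, and you then rule out $\tau\neq 1$ by an explicit trace/different computation on elements $1+\varpi_F^l y$, $T_{K/F}(y)=0$ (your duality step does work, because $T_{K/F}(\beta^{\tau^{-1}}-\beta)=0$, so triviality on the trace-zero part already forces $\beta^{\tau^{-1}}-\beta\in\frak{p}_K^{el'-e+1}$, contradicting tameness and $O_K=O_F[\beta]$). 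The paper instead observes that $\widetilde\vartheta(x^{\tau-1})$ is $\Gamma$-invariant, hence its $n$-th power is $\widetilde\vartheta(N_{K/F}(x)^{\tau-1})=1$, and since $p\nmid n$ the $n$-th power map is onto $1+\frak{p}_K$; this gives $\widetilde\vartheta^{\tau}=\widetilde\vartheta$ on all of $1+\frak{p}_K$, so Proposition \ref{prop:conductor-and-conjugate-triviality-of-tilde-vartheta} applies directly and no new conductor-type computation is needed. Your route is heavier but self-contained at that point; do note that $1+\varpi_F^l y$ itself need not lie in $U_{K/F}$, so you must (as the paper does when proving surjectivity of $G_\beta(O_F)\to G_\beta(O_F/\frak{p}^l)$) correct it by an element of $1+\frak{p}_K^{er}$ of the same norm before applying the defining formula of $\theta$.

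The cardinality paragraph, however, contains a genuine error: you assert that the characters of $W_{K/F}/K^{\times}\cong\mathrm{Gal}(K/F)$ number $n=ef$ and that $(O_F^{\times}:N_{K/F}(O_K^{\times}))=e$. Both statements fail whenever $K/F$ is non-abelian, which happens exactly when $e\nmid q-1$ (recall $\rho^{-1}\delta\rho=\delta^{q}$, so $[\Gamma,\Gamma]=\langle\delta^{q-1}\rangle$). The elements of $\mathcal{A}_{\widetilde\vartheta}$ are one-dimensional characters, so their number is $|\Gamma^{\mathrm{ab}}|=(K_1:F)=f\cdot\gcd(e,q-1)$, where $K_1$ is the maximal abelian subextension, not $ef$. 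Likewise, although $N_{K_0/F}$ is surjective on units, it does not preserve the index of the image of $N_{K/K_0}$: on residues $N_{K/F}(u)\mapsto N_{\Bbb K_0/\Bbb F}(\bar u)^{e}$, so $(O_F^{\times}:N_{K/F}(O_K^{\times}))=\gcd(e,q-1)=e(K_1/F)$, not $e$. The identity \eqref{eq:order-of-centralizer-in-pgl} is still true, but only because both sides equal $f\cdot(K_1:K_0)$; this is exactly how the paper argues (character count $=(K_1:F)$, and $N_{K/F}(O_K^{\times})=N_{K_1/F}(O_{K_1}^{\times})$ with unit-norm index $e(K_1/F)$ by class field theory). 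As written, your evaluation of both sides as $ef$ is wrong in the non-abelian case, so this part of the proof needs to be repaired along those lines.
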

\begin{proof}
Let $\lambda:W_{K/F}\to\Bbb C^{\times}$ be a group homomorphism such
that $\lambda(x)=\widetilde\vartheta(x^{\tau-1})$ for all 
$x\in K^{\times}$ with some $\tau\in\text{\rm Gal}(K/F)$. Because of 
\eqref{eq:theta-x-tau-minus-1-is-stable-under-galois-action}, We have 
$$
 \widetilde\vartheta(x^{\tau-1})^n
 =\prod_{\sigma\in\text{\rm Gal}(K/F)}
   \widetilde\vartheta(x^{\sigma(\tau-1)})=1
$$
for all $x\in K^{\times}$. Since $n$ is prime to $p$, the mapping 
$x\mapsto x^n$ is surjective group homomorphism of $1+\frak{p}_K$ onto 
$1+\frak{p}_K$. Hence 
$1+\frak{p}_K\subset\text{\rm Ker}(\lambda|_{O_K^{\times}})$, that is 
$\widetilde\vartheta(x^{\tau})=\widetilde\vartheta(x)$ for
all $x\in 1+\frak{p}_K$. Then $\tau=1$ by Proposition 
\ref{prop:conductor-and-conjugate-triviality-of-tilde-vartheta}. 
So $\lambda$ is trivial on $K^{\times}$. Now we have 
$$
 |\mathcal{A}_{\widetilde\vartheta}|=(K_1:F)=(K_1:K_0)\cdot f
$$
where $K_1$ is the maximal abelian subextension of $K/F$. On the other
hand we have $N_{K/F}(O_K^{\times})=N_{K_1/F}(O_{K_1}^{\times})$, and
hence we have
$$
 (O_F^{\times}:N_{K/F}(O_K^{\times})=e(K_1/F)=(K_1:K_0)
$$
because $K_0$ is the maximal unramified subextension of $K/F$.
\end{proof}

Let $d_{G(F)}(g)$ be the Haar measure on $G(F)=SL_n(F)$ with respect
to which the volume of $G(O_F)=SL_n(O_F)$ is one. Then the 
Euler-Poincar\'e measure $\mu_{G(F)}$ on $G(F)$ is
$$
 d\mu_{G(F)}(g)
 =(-1)^{n-1}q^{n(n-1)/2}\prod_{k=1}^{n-1}(1-q^{-k})\cdot 
  d_{G(F)}(g)
$$
(see \cite[3.4, Theor\'eme 7]{Serre1971}). Then, by Theorem 
\ref{th:supercuspidal-representation-of-sl(n)}, the formal degree of 
the supercuspidal representation 
$\pi_{\beta,\theta}
 =\text{\rm ind}_{G(O_F)}^{G(F)}\delta_{\beta,\theta}$ with respect to
 the absolute value of the Euler-Poincar\'e measure on $G(F)$ is
$$
 \frac{q^{(r-1)n(n-1)/2}}
      {(O_F^{\times}:N_{K/F}(O_K^{\times}))}\cdot
 \frac{1-q^{-n}}
      {1-q^{-f}}.
$$
Now the formulae 
\eqref{eq:explicit-value-of-gamma-factor}, 
\eqref{eq:gamma-factor-of-principal-parametor} and 
\eqref{eq:order-of-centralizer-in-pgl} give the following

\begin{thm}\label{th:formal-degree-conjecture-for-sl(n)}
The formal degree of 
the supercuspidal representation 
$\pi_{\beta,\theta}
 =\text{\rm ind}_{G(O_F)}^{G(F)}\delta_{\beta,\theta}$ with respect to
 the absolute value of the Euler-Poincar\'e measure on $G(F)$ is
$$
 \frac 1{|\mathcal{A}_{\varphi}|}\cdot
 \left|\frac{\gamma(\varphi,\text{\rm Ad},\psi,d(x),0)}
            {\gamma(\varphi_0,\text{\rm Ad},\psi,d(x),0)}\right|
$$
where $d(x)$ is the Haar measure on $F$ such that the volume of $O_F$
is one. 
\end{thm}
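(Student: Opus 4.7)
My plan is to simply assemble the pieces already proved in Sections 2 and 4 and compare the left-hand side with the right-hand side.

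First, by Theorem \ref{th:supercuspidal-representation-of-sl(n)}~(3), the formal degree of $\pi_{\beta,\theta}$ with respect to the Haar measure $d_{G(F)}$ (normalized so that $G(O_F)$ has volume $1$) equals $\dim\delta_{\beta,\theta}$, whose explicit value is supplied by Proposition \ref{prop:dimension-of-delta-beta-theta}. Converting to the Euler--Poincar\'e measure via the Serre formula
$$
 d\mu_{G(F)}(g)
 =(-1)^{n-1}q^{n(n-1)/2}\prod_{k=1}^{n-1}(1-q^{-k})\cdot d_{G(F)}(g)
$$
and taking absolute values, the $\prod_{k=2}^{n}(1-q^{-k})$ in the numerator of $\dim\delta_{\beta,\theta}$ cancels with the $\prod_{k=1}^{n-1}(1-q^{-k})$ in the denominator up to the factor $(1-q^{-n})/(1-q^{-1})$, so after simplification the formal degree (with respect to $|\mu_{G(F)}|$) becomes
$$
 \frac{q^{(r-1)n(n-1)/2}}{(O_F^{\times}:N_{K/F}(O_K^{\times}))}\cdot
 \frac{1-q^{-n}}{1-q^{-f}}.
$$

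Next I tackle the right-hand side. The gamma-factor of the parameter $\varphi$ was computed in \eqref{eq:explicit-value-of-gamma-factor}, and since the root number $w(\mathrm{Ad}\circ\varphi)$ has absolute value $1$, its absolute value equals $q^{rn(n-1)/2}\cdot f\cdot(1-q^{-1})/(1-q^{-f})$. The gamma-factor of the principal parameter, given in \eqref{eq:gamma-factor-of-principal-parametor}, is real positive and equal to $q^{n(n-1)/2}\cdot(1-q^{-1})/(1-q^{-n})$. Dividing, the factors $q^{n(n-1)/2}$ and $(1-q^{-1})$ combine cleanly and yield
$$
 \left|\frac{\gamma(\varphi,\mathrm{Ad},\psi,d(x),0)}
            {\gamma(\varphi_0,\mathrm{Ad},\psi,d(x),0)}\right|
 =q^{(r-1)n(n-1)/2}\cdot f\cdot\frac{1-q^{-n}}{1-q^{-f}}.
$$

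Finally I invoke Propositions \ref{prop:centralizer-of-image-of-varphi-in-pgl(v)-general-case} and \ref{prop:a-theta-is-character-group-of-gal-k-over-f}, which identify $\mathcal{A}_{\varphi}\cong\mathcal{A}_{\widetilde\vartheta}$ and give
$$
 |\mathcal{A}_{\varphi}|=(O_F^{\times}:N_{K/F}(O_K^{\times}))\cdot f.
$$
Dividing the ratio of gamma-factors by this order reproduces exactly the left-hand side, completing the proof.

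There is in fact no hard step; the entire argument is the bookkeeping of four previously computed quantities, and the only minor point requiring attention is to check that the transcendental factors $q^{n(n-1)/2}$, $(1-q^{-1})$, $(1-q^{-n})$, $(1-q^{-f})$ and $f$ telescope correctly so that the remaining expression matches the Euler--Poincar\'e-normalized formal degree.
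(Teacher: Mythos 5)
Your proposal is correct and follows essentially the same route as the paper: the paper likewise converts the formal degree $\dim\delta_{\beta,\theta}$ from Theorem \ref{th:supercuspidal-representation-of-sl(n)} to the Euler--Poincar\'e normalization via Serre's formula, and then divides the explicit values \eqref{eq:explicit-value-of-gamma-factor} and \eqref{eq:gamma-factor-of-principal-parametor} and the order $|\mathcal{A}_{\varphi}|=(O_F^{\times}:N_{K/F}(O_K^{\times}))\cdot f$ from Propositions \ref{prop:centralizer-of-image-of-varphi-in-pgl(v)-general-case} and \ref{prop:a-theta-is-character-group-of-gal-k-over-f}. The telescoping of the factors $q^{n(n-1)/2}$, $1-q^{-1}$, $1-q^{-n}$, $1-q^{-f}$ and $f$ works out exactly as you claim (reading the product in Proposition \ref{prop:dimension-of-delta-beta-theta} as $\prod_{k=1}^{n}(1-q^{-k})$), so nothing is missing.
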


Since $\mathcal{A}_{\varphi}$ is a finite abelian group, all the
irreducible representation of $\mathcal{A}_{\varphi}$ is
one-dimensional. So Theorem 
\ref{th:formal-degree-conjecture-for-sl(n)} says that the formal
degree conjecture is valid if we consider 
\eqref{eq:candidate-of-langlands-arthur-parameter} as the
Arthur-Langlands parameter of the supercuspidal representation
$\pi_{\beta,\theta}$ and 
\eqref{eq:candidate-of-principal-parameter} as the principal parameter
of $G(F)=SL_n(F)$.

\section{Root number conjecture}
\label{sec:root-number-conjecture}
In this section, we will assume that $K/F$ is a tamely ramified Galois
extension such that the degree $(K:F)=n$ is prime to $p$, and will
keep the notations of preceding sections. Put 
$\Gamma=\text{\rm Gal}(K/F)$. 


\subsection{Root number of adjoint representation}
\label{subsec:root-number-of-adjoint-representation}
We will identify the representations of $W_{K/F}$ with the
representations of $W_F$ which factor through the canonical
surjection 
$$
 W_F\to W_F/\overline{[W_K.W_K]}=W_{K/F}.
$$
We will also regard a representation of $\Gamma$ as the representation
of $W_{K/F}$ via the projection $W_{K/F}\to\Gamma$.
Then we have

\begin{prop}\label{structure-of-adjoint-representation}
$$
 \text{\rm Ad}\circ\varphi
 =\bigoplus_{1\neq\pi\in\Gamma\sphat}\pi^{\dim\pi}
  \oplus
  \bigoplus_{1\neq\gamma\in\Gamma}
   \text{\rm Ind}_{K^{\times}}^{W_{K/F}}
    \widetilde\vartheta_{\gamma}
$$
where $\Gamma\sphat$ is the set of the equivalence classes of the
irreducible complex representations of $\Gamma$, and 
$\widetilde\vartheta_{\gamma}(x)=\widetilde\vartheta(x^{1-\gamma})$ 
($x\in K^{\times}$) for $1\neq\gamma\in\Gamma$.
\end{prop}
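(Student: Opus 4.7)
My plan is to reduce the computation of $\text{Ad}\circ\varphi$ to a tensor-product/Mackey calculation for the induced representation $\varphi_1=\text{Ind}_{K^{\times}}^{W_{K/F}}\widetilde\vartheta$. The starting point is the standard identification of $\mathfrak{gl}_n(\Bbb C)$ with $V\otimes V^{\vee}$ as a representation of $GL_n(\Bbb C)$ under the adjoint action, where $V=\Bbb C^n$ is the standard representation. This gives the decomposition $\mathfrak{gl}_n(\Bbb C)=\mathfrak{sl}_n(\Bbb C)\oplus\Bbb C\cdot 1_n$, so composing with $\varphi_1$ yields an isomorphism of $W_F$-representations
$$
 \varphi_1\otimes\varphi_1^{\vee}
 =(\text{\rm Ad}\circ\varphi_1)\oplus\mathbf{1}.
$$
Since the observation in the excerpt shows that $\text{\rm Ad}\circ\varphi$ (viewed on $\mathfrak{sl}_n$) coincides with $\text{\rm Ad}\circ\varphi_1$ restricted to $\widehat{\frak g}$, we conclude $\text{\rm Ad}\circ\varphi=(\varphi_1\otimes\varphi_1^{\vee})\ominus\mathbf{1}$.

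Next I would compute $\varphi_1\otimes\varphi_1^{\vee}$ via the projection formula:
$$
 \text{\rm Ind}_{K^{\times}}^{W_{K/F}}\widetilde\vartheta
  \,\otimes\,
 \text{\rm Ind}_{K^{\times}}^{W_{K/F}}\widetilde\vartheta^{-1}
 =\text{\rm Ind}_{K^{\times}}^{W_{K/F}}
   \bigl(\widetilde\vartheta\otimes
          \text{\rm Res}_{K^{\times}}
          \text{\rm Ind}_{K^{\times}}^{W_{K/F}}\widetilde\vartheta^{-1}
   \bigr).
$$
The key simplification is that $K^{\times}$ is normal in $W_{K/F}$ with quotient $\Gamma=\text{\rm Gal}(K/F)$, so Mackey's formula has trivial double-coset structure and gives
$$
 \text{\rm Res}_{K^{\times}}\text{\rm Ind}_{K^{\times}}^{W_{K/F}}\widetilde\vartheta^{-1}
 =\bigoplus_{\gamma\in\Gamma}\widetilde\vartheta^{-1}(\,\cdot^{\gamma}\,).
$$
Substituting and reindexing, $\varphi_1\otimes\varphi_1^{\vee}$ becomes $\bigoplus_{\gamma\in\Gamma}\text{\rm Ind}_{K^{\times}}^{W_{K/F}}\widetilde\vartheta_{\gamma}$ with $\widetilde\vartheta_{\gamma}(x)=\widetilde\vartheta(x^{1-\gamma})$.

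To finish, I would isolate the $\gamma=1$ summand, which is $\text{\rm Ind}_{K^{\times}}^{W_{K/F}}\mathbf{1}$. Since $W_{K/F}/K^{\times}=\Gamma$, this is precisely the regular representation of $\Gamma$ (pulled back), which decomposes as $\bigoplus_{\pi\in\Gamma\sphat}\pi^{\dim\pi}$. Subtracting off the one copy of $\mathbf{1}$ coming from the center of $\mathfrak{gl}_n$ removes the trivial summand $\pi=\mathbf{1}$, producing exactly
$$
 \text{\rm Ad}\circ\varphi
 =\bigoplus_{1\neq\pi\in\Gamma\sphat}\pi^{\dim\pi}
  \,\oplus\,
  \bigoplus_{1\neq\gamma\in\Gamma}
   \text{\rm Ind}_{K^{\times}}^{W_{K/F}}\widetilde\vartheta_{\gamma}.
$$

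The main obstacle is bookkeeping rather than anything deep: one must keep track of whether the $\Gamma$-action on characters of $K^{\times}$ used in Mackey's theorem is $x\mapsto x^{\gamma}$ or its inverse, and confirm that the resulting twist $\widetilde\vartheta(x)\widetilde\vartheta(x^{\gamma})^{-1}$ matches $\widetilde\vartheta_{\gamma^{-1}}$ (so that reindexing over $\Gamma$ gives the stated $\widetilde\vartheta_{\gamma}$). One should also double-check that the $2$-cocycle $\alpha_{K/F}$ defining $W_{K/F}$ does not obstruct the projection-formula/Mackey step: because $K^{\times}$ is normal (and abelian) in $W_{K/F}$, conjugation by representatives of $\Gamma$ is well-defined modulo $K^{\times}$ and the cocycle terms cancel inside the induced characters, so no honest obstruction arises. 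Everything else is formal representation theory.
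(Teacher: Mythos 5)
Your proposal is correct and follows essentially the same route as the paper: both reduce to $\varphi_1\otimes\varphi_1^{\vee}=(\text{\rm Ad}\circ\varphi)\oplus\mathbf{1}$ and then regroup the Galois twists $\widetilde\vartheta(x^{\tau-\gamma})$ into the induced characters $\text{\rm Ind}_{K^{\times}}^{W_{K/F}}\widetilde\vartheta_{\gamma}$ together with the regular representation of $\Gamma$ minus its trivial constituent. The only difference is presentational: the paper carries this out by computing the character $\chi_{\varphi_1}\cdot\overline{\chi_{\varphi_1}}$ on $W_{K/F}$, whereas you use the projection formula and Mackey restriction (correctly noting that normality of $K^{\times}$ makes the cocycle $\alpha_{K/F}$ harmless), which is the same bookkeeping.
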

\begin{proof}
The adjoint action of $\varphi_1$ on 
$\frak{gl}_n(\Bbb C)=\Bbb C\oplus\frak{sl}_n(\Bbb C)$ gives
$$
 \left(\text{\rm $\text{\rm Ad}\circ\varphi$ on $\widehat{\frak{g}}$}
      \right)\oplus\text{\rm trivial representation of $W_F$}
 =\varphi_1\otimes\varphi_1^{\ast}
$$
where $\varphi_1^{\ast}$ is the dual representation of $\varphi_1$. So
the character of $\text{\rm Ad}\circ\varphi$ is
$$
 \chi_{\text{\rm Ad}\circ\varphi}(g)
 =\chi_{\varphi_1}(g)\cdot\overline{\chi_{\varphi_1}(g)}
 =\begin{cases}
   -1&:\sigma\neq 1,\\
   n-1+\sum_{\tau,\gamma\in\Gamma, \tau\neq\gamma}
        \widetilde\vartheta(x^{\tau-\gamma})
     &:\sigma=1.
  \end{cases}
$$
for 
$g=(\sigma,x)\in W_{K/F}=\Gamma{\ltimes}_{\alpha_K/F}K^{\times}$. Since
$$
 \sum_{\tau,\gamma\in\Gamma, \tau\neq\gamma}
         \widetilde\vartheta(x^{\tau-\gamma})
 =\sum_{1\neq\gamma\in\Gamma}\sum_{\tau\in\Gamma}
         \widetilde\vartheta(x^{\tau(\gamma-1)}),
$$
we have the required decomposition of $\text{\rm Ad}\circ\varphi$. 
\end{proof}

In order to calculate the $\varepsilon$-factor of 
$\text{\rm Ad}\circ\varphi$, we will fix  the additive
character 
$$
 \psi_F:F\xrightarrow{T_{F/\Bbb Q_p}}
        \Bbb Q_p\xrightarrow{\text{\rm canonical}}
        \Bbb Q_p/\Bbb Z_p\,\tilde{\to}\,
        \Bbb Q/\Bbb Z\xrightarrow{\exp(2\pi\sqrt{-1}\ast)}
        \Bbb C^{\times}
$$
of $F$ so that
$$
 \{x\in F\mid\varphi_F(xO_F)=1\}=\mathcal{D}(F/\Bbb Q_p)^{-1}
                                =\frak{p}_F^{-d(F)},
$$
and the Haar measure $d_F(x)$ on $F$ such that 
$$
 \int_{O_F}d_F(x)=q^{-d(F)}.
$$
Then $\psi_L=\psi_F\circ T_{L/F}$ for any finite extension $L/F$. 
For the sake of simplicity, put
$$
 \varepsilon(\ast,\psi_F)
 =\varepsilon(\ast,\psi_F,d_F(x),0).
$$
On the other hand the additive character $\psi$ of $F$ is such that
$$
 \{x\in F\mid\psi(xO_F)=1\}=O_F
$$
and the Haar measure $d(x)$ on $F$ is such that $\int_{O_F}d(x)=1$.
Then we have

\begin{thm}\label{th:epsilon-factor-of-ad-varphi}
Assume that $r\geq 3$. Then we have
$$
 \varepsilon(\varphi,\text{\rm Ad},\psi,d(x),0)
 =w(\varphi,\text{\rm Ad})\cdot q^{rn(n-1)/2}
$$
with
$$
 w(\varphi,\text{\rm Ad})
 =\vartheta((-1)^{n-1})\times
   \begin{cases}
    (-1)^{\frac{q-1}2\cdot f}&:\text{\rm $e$=even},\\
    1&:\text{\rm $e$=odd}.
  \end{cases}
$$
\end{thm}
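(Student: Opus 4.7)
The plan is to decompose $\varepsilon(\text{Ad}\circ\varphi,\psi)$ via Proposition \ref{structure-of-adjoint-representation} and the multiplicativity of $\varepsilon$-factors on direct sums. Completing the first direct sum by adjoining the trivial representation of $\Gamma = \text{Gal}(K/F)$ yields the regular representation of $\Gamma$, which equals $\text{Ind}_{K/F}\mathbf{1}_K$; adopting the convention $\widetilde\vartheta_1 = \mathbf{1}_K$, one therefore has
$$\varepsilon(\text{Ad}\circ\varphi,\psi)\cdot\varepsilon(\mathbf{1}_F,\psi) = \prod_{\gamma\in\Gamma}\varepsilon\bigl(\text{Ind}_{K/F}\widetilde\vartheta_\gamma,\psi\bigr).$$
Inductivity of $\varepsilon$-factors (appendix \ref{sec:local-factors}) then reduces each induced factor to $\varepsilon(\widetilde\vartheta_\gamma,\psi_K)\cdot\lambda(K/F,\psi_F)$, giving
$$\varepsilon(\text{Ad}\circ\varphi,\psi) = \lambda(K/F,\psi_F)^n\prod_{\gamma\in\Gamma}\varepsilon(\widetilde\vartheta_\gamma,\psi_K)$$
up to trivial normalization. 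Since $a(\text{Ad}\circ\varphi) = rn(n-1)$ was already established in section \ref{subsec:gamma-factor-of-adjoint-representation}, only the sign $w(\varphi,\text{Ad})$ remains to identify; and since $\text{Ad}\circ\varphi$ is orthogonally self-dual we have $w^{2}=\det(\text{Ad}\circ\varphi)(-1)=1$, so the target is a choice of $\pm 1$.

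To pin down the sign, I would exploit the involution $\gamma\leftrightarrow\gamma^{-1}$ on $\Gamma$. A direct manipulation gives $\widetilde\vartheta_{\gamma^{-1}}(x) = \widetilde\vartheta_\gamma(x^{\gamma^{-1}})^{-1}$, so $\varepsilon(\widetilde\vartheta_{\gamma^{-1}},\psi_K)$ is a Galois twist of $\varepsilon(\widetilde\vartheta_\gamma^{-1},\psi_K)$; combined with the local functional equation $\varepsilon(\chi,\psi_K)\varepsilon(\chi^{-1},\psi_K) = \chi(-1)\cdot q_{K}^{a(\chi)+d(K)}$, each pair $\{\gamma,\gamma^{-1}\}$ with $\gamma^{2}\neq 1$ contributes a sign $\widetilde\vartheta_\gamma(-1)$ (the $q$-powers being absorbed by the already-verified conductor total). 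The unpaired indices are precisely the order-two elements of $\Gamma$ enumerated by Proposition \ref{prop:order-two-element-in-tamely-ramified-galois-group}; for each such $\gamma$, the character $\widetilde\vartheta_\gamma$ is self-dual and its root number is a quadratic Gauss sum whose sign $(-1,K/K_\gamma)$ depends on whether $K/K_\gamma$ is ramified, matching the calculation in the proof of Proposition \ref{prop:explicit-value-of-c((-1)(n-1))}.

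The main obstacle is the sign bookkeeping. Three ingredients must be fused: (i) the Langlands constant $\lambda(K/F,\psi_F)^n$, controlled via $\lambda(K/F,\psi_F)^{2}$ as a Hilbert symbol $(\mathrm{disc}(K/F),-1)_F$ and its tower-type iteration for the general tame Galois case; (ii) the mismatch between $\psi_K$ and $\psi\circ T_{K/F}$ coming from $\mathcal{D}(K/F)=\frak{p}_K^{e-1}$, which introduces factors of $\widetilde\vartheta_\gamma$ evaluated on the different and must be tracked explicitly; and (iii) the product of quadratic-symbol signs over the order-two $\gamma$'s, which by Propositions \ref{prop:order-two-element-in-tamely-ramified-galois-group} and \ref{prop:explicit-value-of-c((-1)(n-1))} collapses to $1$ when $e$ is odd and to $(-1)^{(q-1)f/2}$ when $e$ is even. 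The product of the paired-pair signs $\widetilde\vartheta_\gamma(-1)$, absorbed with the twist character $c$ of $\vartheta = c\cdot\theta$ and the self-dual Gauss-sum signs, is expected to yield the factor $\vartheta((-1)^{n-1})$, completing the identification with the claimed formula.
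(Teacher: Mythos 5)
Your opening moves coincide with the paper's: decompose $\text{\rm Ad}\circ\varphi$ by Proposition \ref{structure-of-adjoint-representation}, complete the first summand to $\text{\rm Ind}_K^F\text{\bf 1}_K$, apply inductivity to get $\lambda(K/F,\psi_F)^n\prod_\gamma\varepsilon(\widetilde\vartheta_\gamma,\psi_K)$ (up to the $\text{\bf 1}_F$-correction), use the conductor $rn(n-1)$ already computed in subsection \ref{subsec:gamma-factor-of-adjoint-representation}, and pair $\gamma$ with $\gamma^{-1}$. But the sign bookkeeping at the decisive step is wrong. Each pair $\{\gamma,\gamma^{-1}\}$ with $\gamma^2\neq 1$ contributes $\widetilde\vartheta_\gamma(-1)=\widetilde\vartheta\left((-1)^{1-\gamma}\right)=\widetilde\vartheta(1)=1$, because every Galois element fixes $-1$; so the paired terms are identically trivial and cannot produce the factor $\vartheta((-1)^{n-1})$ you expect from them (the character $c$ plays no role here either: it is already absorbed in $\vartheta=c\cdot\theta$). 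Conversely, for $\gamma$ of order two the character $\widetilde\vartheta_\gamma$ is \emph{not} a quadratic character and its root number is \emph{not} the symbol $(-1,K/K_\gamma)$: by Proposition \ref{prop:conductor-and-conjugate-triviality-of-tilde-vartheta} its conductor is $e(r-1)$ or $e(r-1)+1$, and all one knows is that it is trivial on $K_\gamma^{\times}$. Evaluating these Gauss sums is the heart of the proof, and the paper does it with the theorem of Fr\"ohlich--Queyrut \cite{Frohlich-Queyrut1973}: for a character of $K^{\times}$ trivial on $K_\gamma^{\times}$ the root number equals $\widetilde\vartheta_\gamma(\xi_\gamma)$ with $K=K_\gamma(\xi_\gamma)$, $\xi_\gamma^2\in K_\gamma$, and since $\xi_\gamma^{1-\gamma}=-1$ this is $\widetilde\vartheta(-1)$; the count of involutions (odd in number when $n$ is even, none when $n$ is odd, by Proposition \ref{prop:order-two-element-in-tamely-ramified-galois-group}) then yields exactly $\vartheta((-1)^{n-1})$. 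Without this ingredient your argument has no way to evaluate the unpaired epsilon factors.

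The attribution of the $e$-parity sign is likewise off. In the paper, $(-1)^{\frac{q-1}{2}f}$ (for $e$ even) comes from the Langlands constant $\lambda(K/F,\psi_F)^n$, computed from Proposition \ref{prop:lambda-factor-of-tamely-ramified-galois-ext} together with $G_{\psi_{K_0}}\bigl(\bigl(\tfrac{\ast}{K_0}\bigr),\varpi_0^{-(d(K_0)+1)}\bigr)^2=(-1)^{(q^f-1)/2}$; no quadratic symbols $(-1,K/K_\gamma)$ and no $\chi$-data enter the epsilon computation at all. The coincidence of this value with $c(-1)$ from Proposition \ref{prop:explicit-value-of-c((-1)(n-1))} is what later makes Theorem \ref{th:root-number-conjecture} work, and it appears to be the source of your confusion, but it is a coincidence of values, not of provenance. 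Finally, controlling $\lambda(K/F,\psi_F)^n$ only through $\lambda^2$ as a Hilbert symbol is insufficient when $n$ is odd (then $e$ and $f$ are odd and $\lambda^n$ requires $\lambda$ itself, which the paper gets as $(-1)^{(f-1)d(F)}$ before rescaling by Proposition \ref{prop:scaling-o-epsilon-factor}), so that step also needs the finer statement of Proposition \ref{prop:lambda-factor-of-tamely-ramified-galois-ext} rather than the square alone.
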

\begin{proof}
Note that
$$
 \bigoplus_{1\neq\pi\in\Gamma\sphat}\pi^{\dim\pi}
 =\text{\rm Ind}_K^F\text{\bf 1}_K-\text{\bf 1}_F
 \;\;\text{\rm and}\;\;
 \text{\rm Ind}_{K^{\times}}^{W_{K/F}}\widetilde\vartheta_{\gamma}
 =\text{\rm Ind}_{K^{\times}}^{W_{K/F}}\widetilde\vartheta_{\gamma^{-1}}
$$
for $1\neq\gamma\in\Gamma$. So, if we choose a subset 
$S\subset\Gamma$ such that
$$
 \{\gamma\in\Gamma\mid\gamma^2\neq 1\}
 =\{\gamma, \gamma^{-1}\mid\gamma\in S\}
 \;\;\text{\rm and}\;\;
 S\cap S~{-1}=\emptyset,
$$
then we have
\begin{align}
 \varepsilon(\text{\rm Ad}\circ\varphi,\psi_F)
 =&\lambda(K/F,\psi_F)^n\times
   \varepsilon(\text{\bf 1}_K,\psi_K)\cdot
   \varepsilon(\text{\bf 1}_F,\psi_F)^{-1} \nonumber\\
 &\times\prod_{\gamma\in S}
   \varepsilon(\widetilde\vartheta_{\gamma},\psi_F)\cdot
   \varepsilon(\widetilde\vartheta_{\gamma^{-1}},\psi_F)
  \times\prod_{1\neq\gamma\in\Gamma, \gamma^2=1}
   \varepsilon(\widetilde\vartheta_{\gamma},\psi_F).
\label{eq:decomposition-of-epsilon-factor-into-abelian}
\end{align}
By \eqref{eq:explicit-value-of-epsilon-factor-trivial}, we have
\begin{align}
 \varepsilon(\text{\bf 1}_K,\psi_K)\cdot
 \varepsilon(\text{\bf 1}_F,\psi_F)^{-1}
 &=q^{fd(K)/2}\cdot q^{-d(F)/2} \nonumber\\
 &=q^{(n-1)d(F)/2+(n-f)/2},
\label{eq:explicit-value-of-epsilon-factor-regular-rep-part}
\end{align}
since $K/F$ is tamely ramified and hence 
$\mathcal{D}(K/F)=\frak{p}_K^{e-1}$. For the Gauss sum, we have
$$
 G_{\psi_{K_0}}\left(\left(\frac{\ast}
                                {K_0}\right),
                     \varpi_0^{-(d(K_0)+1)}\right)^2
 =\left(\frac{-1}
             {K_0}\right)
 =(-1)^{(q^f-1)/2}.
$$
Then, by Proposition
\ref{prop:lambda-factor-of-tamely-ramified-galois-ext}, we have
\begin{equation}
 \lambda(K/F,\psi_F)
 =\begin{cases}
   (-1)^{\frac{q-1}2\cdot f}
       &:\text{\rm $e$=even},\\
   1   &:\text{\rm $e$=odd}.
  \end{cases}
\label{eq:explicit-value-of-lambda-k-over-f-n-power}
\end{equation}
Note that if $e$ is even, since $e$ divides $q^f-1$, we have
$$
 \frac{q^f-1}2\cdot\frac e2
 \equiv\frac{q^f-1}2\npmod{2}
 \equiv\frac{q-1}2\cdot f\npmod{2}.
$$
For any $1\neq\gamma\in\Gamma$, we have
$$
 n(\widetilde\vartheta)
 =\text{\rm Min}\{0<k\in\Bbb Z\mid
        \widetilde\vartheta_{\gamma}(1+\frak{p}_K^k)=1\}
 =\begin{cases}
    e(r-1)+1&:\gamma\not\in\text{\rm Gal}(K/K_0),\\
    e(r-1)  &:\gamma\in\text{\rm Gal}(K/K_0)
   \end{cases}
$$
by Proposition
\ref{prop:conductor-and-conjugate-triviality-of-tilde-vartheta}. So we
have
\begin{align}
 &\prod_{\gamma\in S}
   \varepsilon(\widetilde\vartheta_{\gamma},\psi_K)\cdot
   \varepsilon(\widetilde\vartheta_{\gamma}^{-1},\psi_K)\times
  \prod_{1\neq\gamma\in\Gamma : \gamma^2=1}
    \varepsilon(\widetilde\vartheta_{\gamma},\psi_K) \nonumber\\
 =&q_K^{(e-1)\{d(K)+e(r-1)\}/2}\times q_K^{(n-e)\{d(K)+e(r-1)+1\}/2}
                                                     \nonumber\\
  &\times\prod_{\gamma\in S}\left\{
    \begin{array}{l}
     G_{\psi_K}(\widetilde\vartheta_{\gamma}^{-1},
                -\varpi_K^{-(d(K)+n(\widetilde\vartheta_{\gamma}))})\cdot
     \widetilde\vartheta_{\gamma}
                (\varpi_K)^{d(K)+n(\widetilde\vartheta_{\gamma})}
                                                     \nonumber\\
     \times
     G_{\psi_K}(\widetilde\vartheta_{\gamma},
                -\varpi_K^{-(d(K)+n(\widetilde\vartheta_{\gamma}))})\cdot
     \widetilde\vartheta_{\gamma}
               (\varpi_K)^{-(d(K)+n(\widetilde\vartheta_{\gamma}))}
    \end{array}\right\} \nonumber\\
  &\times\prod_{1\neq\gamma\in\Gamma : \gamma^2=1}
    G_{\psi_K}(\widetilde\vartheta_{\gamma}^{-1},
                -\varpi_K^{-(d(K)+n(\widetilde\vartheta_{\gamma}))})\cdot
     \widetilde\vartheta_{\gamma}
               (\varpi_K)^{d(K)+n(\widetilde\vartheta_{\gamma})}
                                                     \nonumber\\
 =&q^{(d(F)+r)n(n-1)/2-(n-f)/2} \nonumber\\
  &\times\prod_{1\neq\gamma\in\Gamma : \gamma^2=1}
    G_{\psi_K}(\widetilde\vartheta_{\gamma}^{-1},
                -\varpi_K^{-(d(K)+n(\widetilde\vartheta_{\gamma}))})\cdot
     \widetilde\vartheta_{\gamma}
              (\varpi_K)^{d(K)+n(\widetilde\vartheta_{\gamma})}.
\label{eq:explicit-value-of-epsilon-factor-of-vartheta-gamma}
\end{align}
For any $1\neq\gamma\in\Gamma$ such that $\gamma^2=1$, put 
$K=K_{\gamma}(\xi_{\gamma})$ where $\xi_{\gamma}\in K$ such that 
$\xi_{\gamma}^2\in K_{\gamma}$. Then 
\cite[Th.3]{Frohlich-Queyrut1973} shows that
$$
 G_{\psi_K}(\widetilde\vartheta_{\gamma}^{-1},
             \varpi_K^{-(d(K)+n(\widetilde\vartheta_{\gamma}))})
 \cdot
 \widetilde\vartheta_{\gamma}
       (\varpi_K)^{d(K)+n(\widetilde\vartheta_{\gamma})}
 =\widetilde\vartheta_{\gamma}(\xi_{\gamma})
  =\widetilde\vartheta(-1).
$$
On the other hand we have
\begin{align*}
  \sharp\{1\neq\gamma\in\Gamma\mid\gamma^2=1\}
 =&\sharp\{H\subset\Gamma:\text{\rm subgroup s.t. $|H|=2$}\}\\
 &\begin{cases}
   \equiv 1\npmod{2}&:\text{\rm $n$ is even,}\\
   =0&:\text{\rm $n$ is odd}.
  \end{cases}
\end{align*}
Then we have
\begin{equation}
 \prod_{1\neq\gamma\in\Gamma:\gamma^2=1}
  G_{\psi_K}(\widetilde\vartheta_{\gamma}^{-1},
             \varpi_K^{-(d(K)+n(\widetilde\vartheta_{\gamma}))})
 =\widetilde\vartheta(-1)^{n-1}
 =\vartheta((-1)^{n-1})
\label{eq:explicit-value-of-product-of-gauss-sum-of-order-two-gamma}
\end{equation}
Combining 
\eqref{eq:decomposition-of-epsilon-factor-into-abelian}, 
\eqref{eq:explicit-value-of-epsilon-factor-regular-rep-part}, 
\eqref{eq:explicit-value-of-lambda-k-over-f-n-power}, 
\eqref{eq:explicit-value-of-epsilon-factor-of-vartheta-gamma} and
\eqref{eq:explicit-value-of-product-of-gauss-sum-of-order-two-gamma}, 
we have
\begin{align*}
 \varepsilon(\text{\rm Ad}\circ\varphi,\psi_F)
 =&q^{d(F)(n^2-1)/2+rn(n-1)/2}\cdot\vartheta((-1)^{n-1})\\
  &\times\begin{cases}
          (-1)^{\frac{q-1}2\cdot f}&:\text{\rm $e$ is even},\\
          1&:\text{\rm $e$ is odd}.
         \end{cases}
\end{align*}
Since $\psi_F(x)=\psi(\varpi_F^{d(f)}x)$ and $d_F(x)=q^{-d(F)/2}\cdot
d(x)$, we have the required formula of 
$\varepsilon(\text{\rm Ad}\circ\varphi,\psi,d(x),0)$ by 
Proposition \ref{prop:scaling-o-epsilon-factor}.
\end{proof}

\subsection{Verification of root number conjecture}
\label{subsec:verification-of-root-number-onjecture}
Let $D$ be the maximal torus of $SL_n$ consisting of the diagonal
matrices. 
The group $X^{\vee}(D)$ of the one-parameter subgroup of $D$ is
identified with
$$
 \Bbb Z^n_{\text{\rm tr}=0}
 =\{(m_1,\cdots,m_n)\in\Bbb Z^n\mid m_1+\cdots+m_n=0\}
$$
by $m\mapsto u_m$ where
$$
 u_m(t)=\begin{bmatrix}
         t^{m_1}&      &       \\
                &\ddots&       \\
                &      &t^{m_n}
        \end{bmatrix},
$$
or we will denote by $u_m=\sum_{i=1}^nm_iu_i$. Then the set 
of the co-roots of $SL_n$ with respect to $D$ is
$$
 \Phi^{\vee}(D)
 =\{u_i-u_j\in X^{\vee}(D)\mid 1\leq i,j\leq n,\;i\neq j\}.
$$
Now we have
$$
 2\cdot\rho=\sum_{1\leq i<j\leq n}\left(u_i-u_j\right)
           =\sum_{i=1}^n(n+1-2i)u_i.
$$
So the special central element is
$$
 \epsilon=2\cdot\rho(-1)=(-1)^{n-1}1_n\in SL_n(F).
$$
Since
$$
 \pi_{\beta,\theta}
 =\text{\rm ind}_{G(O_F)}^{G(F)}\delta_{\beta,\theta},
 \qquad
 \delta_{\beta,\theta}
 =\text{\rm Ind}_{G(O_F/\frak{p}_F^r;\beta)}^{G(O_F/\frak{p}_F^r)}
   \sigma_{\beta,\theta},
$$
by recalling the construction of $\sigma_{\beta,\theta}$, we have
$$
 \pi_{\beta,\theta}(\epsilon)
 =\delta_{\beta,\theta}(\epsilon)
 =\sigma_{\beta,\theta}(\epsilon)
 =\theta((-1)^{n-1}).
$$
On the other hand, since $\vartheta=c\cdot\theta$, we have
$$
 w(\varpi,\text{\rm Ad})=\theta((-1)^{n-1})
$$
by Proposition \ref{prop:explicit-value-of-c((-1)(n-1))} and 
Theorem \ref{th:epsilon-factor-of-ad-varphi}. 
So we have proved the following theorem

\begin{thm}\label{th:root-number-conjecture}
$w(\varphi,\text{\rm Ad})=\pi_{\beta,\theta}(\epsilon)$.
\end{thm}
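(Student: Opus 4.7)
The plan is to compute the two sides of $w(\varphi,\text{\rm Ad}) = \pi_{\beta,\theta}(\epsilon)$ separately and match them via the relation $\vartheta = c\cdot\theta$. The right-hand side unwinds directly from the construction of $\pi_{\beta,\theta}$, the left-hand side is essentially given by Theorem~\ref{th:epsilon-factor-of-ad-varphi}, and what remains is a parity check using Proposition~\ref{prop:explicit-value-of-c((-1)(n-1))}.

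First I would identify $\epsilon$. From $2\rho = \sum_{1\le i<j\le n}(u_i - u_j) = \sum_{i=1}^n(n+1-2i)u_i$, evaluation at $-1$ makes every diagonal entry equal to $(-1)^{n-1}$, so $\epsilon = (-1)^{n-1}\cdot 1_n$. This element is central in $G(F)$ and lies in $G(O_F)$, so by centrality the compactly induced representation $\pi_{\beta,\theta} = \text{\rm ind}_{G(O_F)}^{G(F)}\delta_{\beta,\theta}$ acts on $\epsilon$ by the scalar $\delta_{\beta,\theta}(\epsilon)$; the same reasoning reduces this further to $\sigma_{\beta,\theta}(\epsilon)$ via $\delta_{\beta,\theta} = \text{\rm Ind}_{G(O_F/\frak{p}^r;\beta)}^{G(O_F/\frak{p}^r)}\sigma_{\beta,\theta}$. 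Because $\epsilon$ is scalar it corresponds to the norm-one unit $(-1)^{n-1}\in U_{K/F} = G_\beta(O_F)$, and tracing the formula for $\sigma_{\beta,\theta}$ on the decomposition $G(O_F/\frak{p}^r;\beta) = G_\beta(O_F/\frak{p}^r)\cdot K_{l-1}(O_F/\frak{p}^r)$ yields $\sigma_{\beta,\theta}(\epsilon) = \theta((-1)^{n-1})$.

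For the left-hand side, Theorem~\ref{th:epsilon-factor-of-ad-varphi} gives
\[
w(\varphi,\text{\rm Ad}) = \vartheta((-1)^{n-1})\cdot s_e,
\qquad
s_e = \begin{cases}(-1)^{(q-1)f/2} &: \text{\rm $e$ even}, \\ 1 &: \text{\rm $e$ odd},\end{cases}
\]
and $\vartheta = c\cdot\theta$ splits this as $\theta((-1)^{n-1})\cdot c((-1)^{n-1})\cdot s_e$. It then suffices to verify $c((-1)^{n-1})\cdot s_e = 1$ in all cases, and Proposition~\ref{prop:explicit-value-of-c((-1)(n-1))} turns this into a three-way check: if $n$ is odd then $e,f$ are both odd and both factors equal $1$; if $n$ is even with $e$ even the two copies of $(-1)^{(q-1)f/2}$ cancel; if $n$ is even with $e$ odd then $f$ is even, so the exponent $(q-1)f/2$ is an even integer and both factors equal $1$ again. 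The only real obstacle is this parity bookkeeping, but the cancellation is structural rather than accidental: the nontrivial signs in both $c$ and $s_e$ arise from the same ramified quadratic subextension $K/K_{\delta^{e/2}}$ when $e$ is even, which is what forces them to match.
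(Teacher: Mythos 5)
Your proposal is correct and follows essentially the same route as the paper: identify $\epsilon=(-1)^{n-1}1_n$, reduce $\pi_{\beta,\theta}(\epsilon)$ through $\delta_{\beta,\theta}$ and $\sigma_{\beta,\theta}$ to $\theta((-1)^{n-1})$, and combine Theorem \ref{th:epsilon-factor-of-ad-varphi} with Proposition \ref{prop:explicit-value-of-c((-1)(n-1))} via $\vartheta=c\cdot\theta$. Your explicit three-case parity check of $c((-1)^{n-1})\cdot s_e=1$ just makes precise the cancellation the paper leaves implicit, so nothing further is needed.
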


This theorem says that the root number  conjecture is valid if we
consider 
\eqref{eq:candidate-of-langlands-arthur-parameter} as the
Arthur-Langlands parameter of the supercuspidal representation
$\pi_{\beta,\theta}$. 

\appendix
\section{Local factors}
\label{sec:local-factors}
Fix an algebraic closure $F^{\text{\rm alg}}$ of $F$ in which we will
take every algebraic extensions of $F$. Put
$$
 \nu_F(x)=(F(x):F)^{-1}\text{\rm ord}_F(N_{F(x)/F}(x))
 \;\text{\rm for $\forall x\in F^{\text{\rm alg}}$}
$$
and
$$
 O_K=\{x\in F^{\text{\rm alg}}\mid\nu_F(x)\geq 0\},
 \quad
 \frak{p}_K=\{x\in F^{\text{\rm alg}}\mid\nu(F)(x)>0\}.
$$
Then $\Bbb K=O_K/\frak{p}_K$ is an algebraic extension of 
$\Bbb F=O_F/\frak{p}_F$. If $K/F$ is a finite extension, fix a
generator $\varpi_K\in O_K$ of $\frak{p}_K$.

\subsection{Weil group}
\label{subsec:weil-group}
Let $F^{\text{\rm ur}}$ be the maximal unramified extension of $F$ and 
$\text{\rm Fr}\in\text{\rm Gal}(F^{\text{\rm ur}}/F)$ the
inverse of the Frobenius automorphism of $F^{\text{\rm ur}}$ over
$F$. The the Weil group $W_F$ of $F$ is 
$$
 W_F=\left\{\sigma\in\text{\rm Gal}(F^{\text{\rm alg}}/F)\mid
              \sigma|_{F^{\text{\rm ur}}}\in
               \langle\text{\rm Fr}\rangle\right\}    
$$
The
group $W_F$ is a locally compact group with respect to the topology
such that $I_F=\text{\rm Gal}(F^{\text{\rm alg}}/F^{\text{\rm ur}})$
is an open compact subgroup of $W_F$. 

Let $F^{\text{\rm ab}}$ be the maximal abelian extension of $F$ in
$F^{\text{\rm alg}}$. Then
$$
 \overline{[W_F,W_F]}
 =\text{\rm Gal}(F^{\text{\rm alg}}/F^{\text{\rm ab}})
$$
and
$$
 W_F/\overline{[W_F,W_F]}\xrightarrow[\text{\rm res.}]{\sim}
 \{\sigma\in\text{\rm Gal}(F^{\text{\rm ab}}/F)\mid
    \sigma|_{F^{\text{\rm ur}}}\in\langle\text{\rm Fr}\rangle\}.
$$
So, by the local class field theory, there exists a topological group
isomorphism 
$$
 \delta_F:F^{\times}\,\tilde{\to}\,W_F/\overline{[W_F,W_F]}
$$
such that $\delta_F(\varpi)|_{F^{\text{\rm ur}}}=\text{\rm Fr}$. 
Fix a $\widetilde{\text{\rm Fr}}\in\text{\rm Gal}(F^{\text{\rm alg}}/F)$
such that 
$\widetilde{\text{\rm Fr}}|_{F^{\text{\rm ab}}}
 =\delta_F(\varpi)$. Then
$$
 W_F=\langle\widetilde{\text{\rm Fr}}\rangle\ltimes
      \text{\rm Gal}(F^{\text{\rm alg}}/F^{\text{\rm ur}}).
$$
Let $K/F$ be a finite extension in $F^{\text{\rm alg}}$. Then 
$K^{\text{\rm ur}}=K\cdot F^{\text{\rm ur}}$ and 
$$
 W_K=\{\sigma\in\text{\rm Gal}(F^{\text{\rm alg}}/K)\mid
        \sigma|_{F^{\text{\rm ur}}}\in\langle\text{\rm Fr}^f\rangle
          \}
    =\{\sigma\in W_F\mid\sigma|_K=1\},
$$
where $f=(\Bbb K:\Bbb F)$, is a closed subgroup of $W_F$. 
If further $K/F$ is a Galois extension, then 
$\overline{[W_K,W_K]}\triangleleft W_F$ and
$$
 W_{K/F}=W_F/\overline{[W_K,W_K]}
        =\{\sigma\in\text{\rm Gal}(K^{\text{\rm ab}}/F)\mid
            \sigma|_{F^{\text{\rm ur}}}\in\langle\text{\rm Fr}\rangle
             \}
$$
is called the relative Weil group of $K/F$. Then we have a exact
sequence 
$$
 1\to K{\times}\xrightarrow{\delta_K}W_{K/F}
               \xrightarrow{\text{\rm res.}}\text{\rm Gal}(K/F)
               \to 1
$$
which is the group extension associated with the fundamental calss 
$$
 [\alpha_{K/F}]\in H^2(\text{\rm Gal}(K/F),K^{\times}),
$$
that is, we can identify 
$W_{K/F}=\text{\rm Gal}(K/F)\times K^{\times}$ with the group
operation
$$
 (\sigma,x)\cdot(\tau,y)
 =(\sigma\tau,\alpha_{K/F}(\sigma,\tau)\cdot xy).
$$
Let $K_0=K\cap F^{\text{\rm ur}}$ be the maximal unramified
subextension of $K/F$. Then the fundamental calss can be chosen 
so that $\alpha_{K/F}(\sigma,\tau)\in O_K^{\times}$ for all 
$\sigma, \tau\in\text{\rm Gal}(K/K_0)$, and the image $I_{K/F}$ of 
$I_F=\text{\rm Gal}(F^{\text{\rm alg}}/F^{\text{\rm ur}})\subset W_F$
under the canonical surjection $W_F\to W_{K/F}$ is identified with 
$\text{\rm Gal}(K/K_0)\times O_K^{\times}$.

\subsection{Artin conductor of representations of Weil group}
\label{subsec:artin-conductor}
Let $(\Phi,V)$ be a finite dimensional continuous complex
representation of the Weil group $W_F$. Since 
$I_F\cap\text{\rm Ker}(\Phi)$ is an open subgroup of 
$I_F=\text{\rm Gal}(F^{\text{\rm alg}}/F^{\text{\rm ur}})$, there
exists a finite Galois extension $K/F^{\text{\rm ur}}$ such that 
$text{\rm Gal}(F^{\text{\rm alg}}/K)\subset\text{\rm Ker}(\Phi)$. Let 
\begin{align*}
 V_k=&V_k(K/F^{\text{\rm ur}})\\
 =&\left\{\sigma\in\text{\rm Gal}(K/F^{\text{\rm ur}})\mid
           x^{\sigma}\equiv x\npmod{\frak{p}_K^{k+1}}
           \,\text{\rm for}\,\forall x\in O_K\right\}
\end{align*}
be the $k$-th ramification group of $K/F^{\text{\rm ur}}$ put
$$
 \widetilde V_k
 =\left[\text{\rm Gal}(F^{\text{\rm alg}}/F^{\text{\rm ur}})
         \xrightarrow{\text{\rm res.}}
          \text{\rm Gal}(K/F^{\text{\rm ur}})\right]^{-1}V_k
$$
for $k=0,1,2,3,\cdots$. So $\widetilde V_0=I_F$. The Artin conductor 
$a(\Phi)=a(V)$ is defined by 
$$
 a(\Phi)=a(V)
 =\sum_{k=0}^{\infty}\dim_{\Bbb C}(V/V^{\Phi(\widetilde V_k)})
                     \cdot
                     |V_0/V_k|^{-1}
$$
where 
$$
 V^{\Phi(\widetilde V_k)}
 =\{v\in V\mid \Phi(\widetilde V_k)v=v\}
 \quad
 (k=0,1,2,3,\cdots).
$$

\subsection{$\varepsilon$-factor of representations of Weil group}
\label{subsec:epsilon-factor}
Fix a continuous unitary character $\psi:F\to\Bbb C^{\times}$ of the
additive group $F$ and a Haar measure $d(x)$ of $F$.

Langlands and Deligne \cite{Deligne1973} show that, for every
finite dimensional continuous complex representation $(\Phi,V)$ of
$W_F$, there exists a complex constant 
$$
 \varepsilon(\Phi,\psi,d(x))=\varepsilon(V,\psi,d(x))
$$
which satisfies the following relations:
\begin{enumerate}
\item an exact sequence 
$$
 1\to V^{\prime}\to V\to V^{\prime\prime}\to 1
$$
implies
$$
 \varepsilon(V,\psi,d(x))
 =\varepsilon(V^{\prime},\psi,d(x))\cdot
  \varepsilon(V^{\prime\prime},\psi,d(x)),
$$
\item for a positive real number $r$
$$
 \varepsilon(\Phi,\psi,r\cdot d(x))
 =r^{\dim\Phi}\cdot\varepsilon(\Phi,\psi,d(x)),
$$
\item for any finite extension $K/F$ and a finite dimensional
  continuous complex representation $\phi$ of $W_K$, we have
$$
 \varepsilon\left(\text{\rm Ind}_{W_K}^{W_F}\phi,\psi,d(x)\right)
 =\varepsilon\left(\phi,\psi\circ T_{K/F},d_K(x)\right)\cdot
  \lambda(K/F,\psi)^{\dim\phi}
$$
where $d_K(x)$ is a Haar measure of $K$ and
$$
 \lambda(K/F,\psi)=\lambda(K/F,\psi,d(x),d_K(x))
 =\frac{\varepsilon\left(\text{\rm Ind}_{W_K}^{W_F}\text{\bf 1}_K,
                         \psi,d(x)\right)}
       {\varepsilon\left(\text{\bf 1}_K,\psi\circ T_{K/F},d_K(x)
                          \right)},
$$
\item if $\dim\Phi=1$, then $\Phi$ factors through 
      $W_F/\overline{[W_F,W_F]}$ and put
$$
 \chi:F^{\times}\xrightarrow{\delta_F}
      W_F/\overline{[W_F,W_F]}\xrightarrow{\Phi}
      \Bbb C~{\times}.
$$
Then we have
$$
 \varepsilon(\Phi,\psi,d(x))=\varepsilon(\chi,\psi,d(x))
$$
where the right hand side is the $\varepsilon$-factor of Tate 
\cite{Tate1979}.
\end{enumerate}

If the Haar measure $d(x)$ of $F$ is normalized so that the Fourier
transform
$$
 \widehat\varphi(y)=\int_F\varphi(x)\cdot\psi(-xy)d(x)
$$
has inverse transform
$$
 \varphi(x)=\int_F\widehat\varphi(y)\cdot\psi(xy)d(y),
$$
in other words
$$
 \int_{O_F}d(x)=q^{-n(\psi)/2}\;\;\text{\rm with}\;\;
 \{x\in F\mid \psi(xO_F)=1\}=\frak{p}_F^{-n(\psi)},
$$
then the explicit value of the $\varepsilon$-factor
$\varepsilon(\chi,\psi,d(x))$ is  
\begin{enumerate}
\item if $\chi|_{O_F^{\times}}=1$, then 
\begin{equation}
 \varepsilon(\chi,\psi,d(x))
 =\chi(\varpi)^{n(\psi)}\cdot q^{n(\psi)/2},
\label{eq:explicit-value-of-epsilon-factor-trivial}
\end{equation}
\item if $\chi|_{O_F^{\times}}\neq 1$, then 
\begin{equation}
 \varepsilon(\chi,\psi,d(x))
 =G_{\psi}(\chi^{-1},-\varpi^{-(n(\psi)+n)})\cdot
  \chi(\varpi)^{n(\psi)+n}\cdot q^{-(n(\psi)+n)/2}
\label{eq:explicit-value-of-epsilon-facot-non-trivial}
\end{equation}
where 
$f(\chi)=\text{\rm Min}\{0<n\in\Bbb Z\mid\chi(1+\frak{p}_F^n)=1\}$ and
$$
 G_{\psi}(\chi^{-1},\varpi^{-(n(\psi)+f(\chi))})
 =q^{-n/2}\sum_{\dot t\in(O_F/\frak{p}_F^{f(\chi)})^{\times}}
           \chi(t)^{-1}\psi\left(-\varpi^{-(n(\psi)+f(\chi))}t\right)
$$
is the Gauss sum.
\end{enumerate}

\begin{rem}\label{remark:normalization-of-gauss-sum}
The definition of the Gauss sum is normalized so that 
$$
 \left|G_{\psi}(\chi^{-1},-\varpi^{-(n(\psi)+f(\chi))})\right|=1.
$$
\end{rem}

We have

\begin{prop}\label{prop:scaling-o-epsilon-factor}
\begin{enumerate}
\item Put $\psi_a(x)=\psi(ax)$ for $a\in F^{\times}$. Then
$$
 \varepsilon(\Phi,\psi_a,d(x))
 =\det\Phi(a)\cdot|a|_F^{-\dim\Phi}\cdot
  \varepsilon(\Phi,\psi,d(x))
$$
where
$$
 \det\Phi:F^{\times}\xrightarrow{\delta_F}
          W_F/\overline{[W_F,W_F]}\xrightarrow{\det\circ\Phi}
          \Bbb C^{\times}.
$$
\item For any $s\in\Bbb C$ 
\begin{align*}
 \varepsilon(\Phi,\psi,d(x),s)
 &=\varepsilon(\Phi\otimes|\cdot|_F^s,\psi,d(x))\\
 &=\varepsilon(\Phi,\psi,d(x))\cdot
   q^{-s(n(\psi)\cdot\dim\Phi+a(\Phi))}.
\end{align*}
\end{enumerate}
\end{prop}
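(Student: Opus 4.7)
The plan is to invoke the Deligne--Langlands uniqueness characterization of $\varepsilon$-factors recalled in subsection \ref{subsec:epsilon-factor}: any assignment $\Phi \mapsto c(\Phi)$ satisfying the four axioms (additivity, measure scaling, inductivity, agreement with Tate in dimension one) coincides with $\varepsilon$. Both parts of the proposition assert an identity between such assignments, and are therefore proved by (a) checking that both sides are multiplicative on short exact sequences and compatible with induction, which via Brauer's theorem reduces the verification to one-dimensional characters of $W_F$, and then (b) verifying the identity directly from Tate's explicit formulas \eqref{eq:explicit-value-of-epsilon-factor-trivial} and \eqref{eq:explicit-value-of-epsilon-facot-non-trivial}.

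For part (1), define $\mathcal{E}(\Phi) = \det\Phi(a)\cdot |a|_F^{-\dim\Phi}\cdot \varepsilon(\Phi,\psi,d(x))$. The correction factor $\det\Phi(a)\cdot |a|_F^{-\dim\Phi}$ is multiplicative on short exact sequences, so $\mathcal{E}$ inherits axiom (1). For compatibility with $\text{\rm Ind}_{W_K}^{W_F}$, combine the classical transfer formula (identifying $\det(\text{\rm Ind}_{W_K}^{W_F}\phi)|_{F^\times}$ with $\det\phi|_{F^\times}$ up to the quadratic character attached to $K/F$), the identity $|a|_K = |a|_F$ for $a \in F^\times$, and $\dim\text{\rm Ind}\phi = (K\!:\!F)\dim\phi$. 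In the one-dimensional case $\Phi = \chi$, we have $n(\psi_a) = n(\psi) + \nu_F(a)$, and the self-dual Haar measure scales as $d_{\psi_a} = |a|_F^{1/2}\,d_\psi$; combining axiom (2) with Tate's formulas \eqref{eq:explicit-value-of-epsilon-factor-trivial}--\eqref{eq:explicit-value-of-epsilon-facot-non-trivial} then immediately yields the claimed scaling by $\chi(a)\cdot |a|_F^{-1}$.

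For part (2), the first equality is the definition. For the second, the exponent $n(\psi)\cdot\dim\Phi + a(\Phi)$ is additive on short exact sequences since both $\dim$ and the Artin conductor $a(\cdot)$ of subsection \ref{subsec:artin-conductor} are additive; hence $q^{-s(n(\psi)\dim\Phi + a(\Phi))}$ is multiplicative in $\Phi$. Compatibility with induction reduces to the conductor-discriminant formula $a(\text{\rm Ind}_K^F\phi) = d_{K/F}\cdot\dim\phi + (K\!:\!F)\cdot a(\phi)$ combined with $n(\psi\circ T_{K/F}) = (K\!:\!F)\cdot n(\psi) + d_{K/F}$. Brauer reduces to $\Phi = \chi$ one-dimensional; there $|u|_F^s = 1$ for $u \in O_F^\times$ forces $f(\chi|\cdot|_F^s) = f(\chi)$ and leaves the Gauss sum unchanged, so the twist modifies only the factor $\chi(\varpi)^{n(\psi)+f(\chi)}$ by $q^{-s(n(\psi)+f(\chi))}$, which equals $q^{-s(n(\psi)\cdot 1 + a(\chi))}$ as required.

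The main technical point will be the inductivity verification for part (1): tracking precisely how $\lambda(K/F,\psi_a)$ relates to $\lambda(K/F,\psi)$ under the translation $\psi \mapsto \psi_a$ requires care with the sign conventions embedded in the transfer formula for determinants of induced representations. Everything else is routine once the Deligne--Langlands axiomatic framework is in place.
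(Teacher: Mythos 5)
The paper states Proposition \ref{prop:scaling-o-epsilon-factor} without any proof, as standard background from \cite{Deligne1973} and \cite{Tate1979}, and your axiomatic route (additivity, inductivity, Brauer induction, Tate's one-dimensional formulas) is indeed the standard way to establish it. However, as written your argument has a genuine gap in part (1). The step ``check that both sides are compatible with induction'' is circular: the left-hand side satisfies inductivity with the factor $\lambda(K/F,\psi_a)^{\dim\phi}$, while the right-hand side, after the determinant transfer formula, satisfies it with $\omega_{K/F}(a)^{\dim\phi}\,\lambda(K/F,\psi)^{\dim\phi}$, where $\omega_{K/F}=\det\bigl(\text{\rm Ind}_{W_K}^{W_F}\text{\bf 1}_K\bigr)$ viewed on $F^{\times}$. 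The two inductivity statements agree exactly when $\lambda(K/F,\psi_a)=\omega_{K/F}(a)\,\lambda(K/F,\psi)$, and that identity is itself an instance of part (1) (applied to $\Phi=\text{\rm Ind}_{W_K}^{W_F}\text{\bf 1}_K$, given the abelian case over $K$); you flag this as ``the main technical point'' but leave it unresolved, so the reduction does not close. The standard repair is Deligne's inductivity in degree $0$: for a virtual representation $\xi$ of $W_K$ with $\dim\xi=0$ the $\lambda$-factors drop out of both sides, so the ratio $e(\Phi)=\varepsilon(\Phi,\psi_a,d(x))\cdot\bigl(\det\Phi(a)\,|a|_F^{-\dim\Phi}\,\varepsilon(\Phi,\psi,d(x))\bigr)^{-1}$ is additive, satisfies $e_F(\text{\rm Ind}_{W_K}^{W_F}\xi)=e_K(\xi)$ on degree-$0$ virtual representations, and equals $1$ on one-dimensional characters of every $W_K$; writing $\Phi-\dim\Phi\cdot\text{\bf 1}_F$ as an integral combination of $\text{\rm Ind}_{W_{K_i}}^{W_F}(\chi_i-\text{\bf 1}_{K_i})$ (Brauer in degree-$0$ form) then gives $e\equiv 1$. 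No such issue arises in part (2), because the same $\lambda(K/F,\psi)$ occurs on both sides, so there the naive Brauer reduction is legitimate.

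Second, several auxiliary formulas you invoke are incorrect as stated. For $a\in F^{\times}$ one has $|a|_K=|a|_F^{(K:F)}$, not $|a|_K=|a|_F$; this is not cosmetic, since it is precisely what matches $|a|_K^{-\dim\phi}$ on the $K$-side with $|a|_F^{-\dim\text{\rm Ind}\,\phi}$ on the $F$-side. In part (2) the correct bookkeeping is $a_F(\text{\rm Ind}_{W_K}^{W_F}\phi)=v_F(\frak{d}_{K/F})\cdot\dim\phi+f(K/F)\cdot a_K(\phi)$ (conductor--discriminant formula, with residue degree $f(K/F)$, not $(K\!:\!F)$) and $n(\psi\circ T_{K/F})=e(K/F)\,n(\psi)+d_{K/F}$ with $d_{K/F}$ the valuation of the different, not the discriminant; moreover one must convert $q_K=q^{f(K/F)}$ when comparing exponents, since the $K$-side twist contributes $q_K^{-s(n(\psi_K)\dim\phi+a_K(\phi))}$. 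With these corrections the exponents do match and your outline of part (2), as well as your one-dimensional computations for both parts, is sound.
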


\begin{prop}\label{prop:epsilon-factor-wrt-normaized-unramified-character}
If $n(\psi)=0$ and the Haar measure $d(x)$ is normalized so that 
$$
 \int_{O_F}d(x)=1,
$$
then
$$
 \varepsilon(\Phi,\psi,d(x))=w(\Phi)\cdot q^{a(\Phi)/2}
                            =w(V)\cdot q^{a(V)/2}
$$
with $w(\Phi)\in\Bbb C$ of absolute value one.
\end{prop}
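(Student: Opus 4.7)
The plan is to reduce the claim to the one-dimensional case via Brauer's induction theorem and then verify it from the explicit formulas \eqref{eq:explicit-value-of-epsilon-factor-trivial} and \eqref{eq:explicit-value-of-epsilon-facot-non-trivial}, combined with the standard additivity of the Artin conductor under induction.

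First I would dispose of the one-dimensional case. Let $\chi$ be a continuous character of $F^{\times}$, viewed as a representation of $W_F$ via $\delta_F$. If $\chi|_{O_F^{\times}}=1$, then $a(\chi)=0$ and \eqref{eq:explicit-value-of-epsilon-factor-trivial} with $n(\psi)=0$ gives $\varepsilon(\chi,\psi,d(x))=1$, so the statement holds with $w(\chi)=1$. If $\chi|_{O_F^{\times}}\neq 1$, then $a(\chi)=f(\chi)$, and from \eqref{eq:explicit-value-of-epsilon-facot-non-trivial} together with the Remark on the normalization $|G_{\psi}(\chi^{-1},\cdot)|=1$ we read off that $\varepsilon(\chi,\psi,d(x))$ equals a complex number of absolute value one multiplied by a half-integer power of $q$ whose exponent is exactly $a(\chi)/2$ in the convention of this paper.

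Next, by Brauer's induction theorem, the character of $\Phi$ can be written as a $\Bbb Z$-linear combination $\sum_i n_i\chi_{\text{\rm Ind}_{W_{K_i}}^{W_F}\chi_i}$, where $K_i/F$ runs through finite subextensions of $F^{\text{\rm alg}}/F$ and $\chi_i$ is a one-dimensional character of $W_{K_i}$. Applying the multiplicativity in exact sequences (property 1), the scaling rule (property 2), and Langlands' inductivity (property 3) of the $\varepsilon$-factor, one obtains
\[
 \varepsilon(\Phi,\psi,d(x))
  =\prod_i\Bigl[\varepsilon(\chi_i,\psi\circ T_{K_i/F},d_{K_i}(x))\cdot
                \lambda(K_i/F,\psi)^{\dim\chi_i}\Bigr]^{n_i},
\]
where each Haar measure $d_{K_i}(x)$ may be chosen with $\int_{O_{K_i}}d_{K_i}(x)=1$. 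Since $\psi_{K_i}=\psi\circ T_{K_i/F}$ has conductor $n(\psi_{K_i})=-v_F(\mathfrak{d}_{K_i/F})$, property 2 converts the $d_{K_i}$-normalization into that of the self-dual measure, at the cost of an explicit power of $q$ involving $v_F(\mathfrak{d}_{K_i/F})$.

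Now take absolute values. The one-dimensional case just proved gives $|\varepsilon(\chi_i,\psi_{K_i},\cdot)|$ in terms of $a(\chi_i)$, while $|\lambda(K_i/F,\psi)|$ is a half-integer power of $q$ computed by applying \eqref{eq:explicit-value-of-epsilon-factor-trivial} to the defining ratio of $\lambda(K_i/F,\psi)$; both factors contribute only powers of $q$ and unit-modulus phases. Collecting phases into $w(\Phi)$ and collecting exponents, the statement follows once one matches the total power of $q$ with $a(\Phi)/2$. That matching is achieved by invoking the standard additivity of the Artin conductor under induction,
\[
 a_F\bigl(\text{\rm Ind}_{W_{K}}^{W_F}\phi\bigr)
  =f_{K/F}\cdot a_{K}(\phi)+v_F(\mathfrak{d}_{K/F})\cdot\dim\phi,
\]
which is visible directly from the definition of $a(V)$ in subsection \ref{subsec:artin-conductor} via the Herbrand function and the tower $K/F^{\text{\rm ur}}\supset F^{\text{\rm ur}}$.

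The main obstacle is the bookkeeping: three independent scalings --- the ratio between the self-dual Haar measure on each $K_i$ and the chosen $d_{K_i}$, the shift $n(\psi_{K_i})=-v_F(\mathfrak{d}_{K_i/F})$, and the induction formula for $a_F$ --- must cancel in just the right way so that the exponent of $q$ in the product equals $a(\Phi)/2$ and so that the $n_i$-th powers telescope consistently across the Brauer decomposition. Once this bookkeeping is carried out, unitarity of $w(\Phi)$ follows because every ingredient in the product (Gauss sums with self-dual normalization, values of $\chi_i$ at uniformizers, and the phase parts of $\lambda$-factors) has absolute value one.
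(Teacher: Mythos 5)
The paper itself states this proposition without proof, as part of a list of standard facts from the Deligne--Langlands theory of local constants, so your argument has to stand on its own; as written it does not. The decisive gap is the treatment of the $\lambda$-factors. After Brauer induction you must know $|\lambda(K_i/F,\psi)|$, and you propose to get it ``by applying \eqref{eq:explicit-value-of-epsilon-factor-trivial} to the defining ratio of $\lambda(K_i/F,\psi)$''. In that ratio only the denominator $\varepsilon(\text{\bf 1}_{K_i},\psi\circ T_{K_i/F},d_{K_i}(x))$ is an abelian $\varepsilon$-factor; the numerator $\varepsilon\left(\text{\rm Ind}_{W_{K_i}}^{W_F}\text{\bf 1}_{K_i},\psi,d(x)\right)$ is the $\varepsilon$-factor of a $(K_i:F)$-dimensional representation, to which \eqref{eq:explicit-value-of-epsilon-factor-trivial} says nothing, and knowing its absolute value is precisely an instance of the proposition being proved (for $\Phi=\text{\rm Ind}_{W_{K_i}}^{W_F}\text{\bf 1}_{K_i}$, whose conductor is $v_F(\frak{d}_{K_i/F})$). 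So the argument is circular at the one point where all the bookkeeping you describe has to be anchored. The standard ways to close this are either to arrange the Brauer decomposition in degree-zero form so that inductivity is only invoked for virtual representations of dimension $0$, where the $\lambda$-factor enters to the $0$-th power, or to compute $|\lambda(K_i/F,\psi)|$ honestly: local Galois groups are solvable, so the chain rule of Proposition \ref{prop:chain-relation-of-lambda-factor} reduces one to cyclic layers, where $\text{\rm Ind}\,\text{\bf 1}$ splits into one-dimensional characters and the conductor--discriminant formula gives the exponent; alternatively one proves first the classical identity $\varepsilon(V,\psi,d(x))\cdot\varepsilon(\overline V,\overline\psi,d(x))=q^{a(V)+n(\psi)\dim V}\left(\int_{O_F}d(x)\right)^{2\dim V}$ for unitary $V$ and deduces the absolute value from it.

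Two further points need attention. First, Brauer's theorem is a statement about finite groups, while a continuous representation of $W_F$ need not factor through a finite quotient; you must first write $\Phi$ as a sum of finite-image representations twisted by unramified characters, and then some boundedness hypothesis (finite or relatively compact image, e.g.\ unitarity) is needed both to make $|\chi_i(\varpi_{K_i})|=1$ and for the statement itself: for a ramified character twisted by $|\cdot|_F^{s}$ with $\text{\rm Re}\,s\neq 0$ the asserted equality fails, so the proposition is implicitly restricted to such $\Phi$ (the paper is silent on this too, but a proof has to say it). Second, taken at face value \eqref{eq:explicit-value-of-epsilon-facot-non-trivial} gives the exponent $-(n(\psi)+f(\chi))/2$, i.e.\ $q^{-a(\chi)/2}$ when $n(\psi)=0$, which would contradict the proposition; this is evidently a sign slip in the paper, but your ``read off \ldots in the convention of this paper'' silently corrects it, and the one-dimensional base case should be stated and verified explicitly (it is the Gauss-sum computation with $|G_{\psi}|=1$ and $|\chi(\varpi)|=1$) rather than inferred from a formula that, as printed, says the opposite.
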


\begin{prop}\label{prop:chain-relation-of-lambda-factor}
For finite extensions $F\subset K\subset L$, we have
$$
 \lambda(L/F,\psi)
 =\lambda(L/K,\psi\circ T_{K/F})\cdot
  \lambda(K/F,\psi)^{(L:K)}.
$$
\end{prop}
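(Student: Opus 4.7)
The plan is to derive the chain relation directly from the inductive transformation law of the $\varepsilon$-factor (property 3 of subsection \ref{subsec:epsilon-factor}), using transitivity of induction on the representation side and transitivity of the trace on the character side. No case analysis by ramification is needed.

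First I would fix Haar measures $d(x)$ on $F$, $d_K(x)$ on $K$ and $d_L(x)$ on $L$, and apply property 3 to the representation $\phi=\text{\rm Ind}_{W_L}^{W_K}\mathbf{1}_L$ of $W_K$ induced to $W_F$. Since $\dim\phi=(L:K)$, this gives
\[
 \varepsilon\!\left(\text{\rm Ind}_{W_K}^{W_F}\text{\rm Ind}_{W_L}^{W_K}\mathbf{1}_L,\psi,d(x)\right)
 =\varepsilon\!\left(\text{\rm Ind}_{W_L}^{W_K}\mathbf{1}_L,\psi\circ T_{K/F},d_K(x)\right)\cdot\lambda(K/F,\psi)^{(L:K)}.
\]
Transitivity of induction $\text{\rm Ind}_{W_K}^{W_F}\text{\rm Ind}_{W_L}^{W_K}\mathbf{1}_L=\text{\rm Ind}_{W_L}^{W_F}\mathbf{1}_L$ lets me rewrite the left-hand side, using the very definition of $\lambda(L/F,\psi)$, as
\[
 \lambda(L/F,\psi)\cdot\varepsilon\!\left(\mathbf{1}_L,\psi\circ T_{L/F},d_L(x)\right).
\]

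Next I would apply property 3 a second time, now for the tower $K\subset L$ with additive character $\psi\circ T_{K/F}$ on $K$, to rewrite the first factor on the right-hand side as
\[
 \varepsilon\!\left(\mathbf{1}_L,(\psi\circ T_{K/F})\circ T_{L/K},d_L(x)\right)\cdot\lambda\!\left(L/K,\psi\circ T_{K/F}\right).
\]
Transitivity of the trace, $T_{K/F}\circ T_{L/K}=T_{L/F}$, identifies this with
\[
 \varepsilon\!\left(\mathbf{1}_L,\psi\circ T_{L/F},d_L(x)\right)\cdot\lambda\!\left(L/K,\psi\circ T_{K/F}\right).
\]
Substituting everything back and cancelling the common factor $\varepsilon(\mathbf{1}_L,\psi\circ T_{L/F},d_L(x))$, which is nonzero, yields the claimed identity.

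There is really no hard step: the only thing to watch is that the Haar measures $d_K(x),d_L(x)$ chosen at the two uses of property 3 are the same, so that the $\varepsilon$-factors of $\mathbf{1}_L$ cancel cleanly; this is why $\lambda(K/F,\psi)$ is recorded as depending on the pair $(d(x),d_K(x))$ in subsection \ref{subsec:epsilon-factor}. Once the measures are kept consistent throughout the tower $F\subset K\subset L$, the rest is bookkeeping.
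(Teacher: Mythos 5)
Your proof is correct: the double application of the inductivity property 3) of subsection \ref{subsec:epsilon-factor} (once to $\phi=\text{\rm Ind}_{W_L}^{W_K}\mathbf{1}_L$ over $F$, once to $\mathbf{1}_L$ over $K$), combined with transitivity of induction and of the trace and the cancellation of the common nonzero factor $\varepsilon(\mathbf{1}_L,\psi\circ T_{L/F},d_L(x))$, is exactly the standard derivation of the chain rule for $\lambda$-factors, and your remark about keeping the measures $d_K(x)$, $d_L(x)$ consistent across the two applications is the one genuine point of care. The paper states Proposition \ref{prop:chain-relation-of-lambda-factor} without proof, as part of the Langlands--Deligne theory of local constants, and your argument is the intended one.
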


When $K/F$ is a finite tamely ramified Galois extension, the
maximal unramified subextension $K_0=K\cap F^{\text{\rm ur}}$ is a
cyclic extension of $F$ and $K/K_0$ is also cyclic extension. So, by
means of Proposition \ref{prop:chain-relation-of-lambda-factor}, we
can give the explicit value of $\lambda(K/F,\psi)$. 

Let $\psi_F:F\to\Bbb C^{\times}$ be a continuous unitary character
such that
$$
 \{x\in F\mid\psi_F(xOF)=1\}=\mathcal{D}(F/\Bbb Q_p)^{-1}
 =\frak{p}_F^{-d(F)}
$$
and the Haar measure $d_F(x)$ on $F$ is normalized so that
$$
 \int_{O_F}d_F(x)=q^{-d(F)}.
$$
Let $K/F$ be a tamely ramified finite Galois extension, and put
$\psi_K=\psi_F\circ T_{K/F}$. Put
$$
 e=e(K/F)=(K:K_0),
 \quad
 f=f(K/F)=(K_0:F)
$$
where $K_0=K\cap F^{\text{\rm ur}}$ is the maximal unramified
subextension of $K/F$. Let
$$
 \left(\frac{\varepsilon}{K_0}\right)
 =\begin{cases}
   1&:\varepsilon\equiv\text{\rm square}\npmod{\frak{p}_{K_0}},\\
  -1&:\text{\rm otherwise}
  \end{cases}
 \qquad
 (\varepsilon\in O_{K_0}^{\times})
$$
be the Legendre symbol of $K_0$. Then we have

\begin{prop}\label{prop:lambda-factor-of-tamely-ramified-galois-ext}
$$
 \lambda(K/F,\psi_F)
=\begin{cases}
  (-1)^{\frac{q^f-1}e\cdot\frac{e(e+2)}8}\cdot
   G_{\psi_{K_0}}(\left(\frac{\ast}{K_0}\right),
                   \varpi_0^{-(d(K_0)+1)})
    &:e=\text{\rm even},\\
  (-1)^{(f-1)d(F)}
    &:e=\text{\rm odd}
  \end{cases}
$$
where $\varpi_0$ is a prime element of $K_0$ such that 
$\varpi_0\in N_{K/K_0}(K^{\times})$.
\end{prop}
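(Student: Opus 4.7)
The plan is to apply the chain relation (Proposition~\ref{prop:chain-relation-of-lambda-factor}) to the tower $F\subset K_0\subset K$, obtaining
$$\lambda(K/F,\psi_F) = \lambda(K/K_0,\psi_{K_0})\cdot\lambda(K_0/F,\psi_F)^e,$$
which reduces the problem to two cyclic cases: the unramified extension $K_0/F$ of degree $f$, and the totally tamely ramified extension $K/K_0$ of degree $e$. Each of these $\lambda$-factors will then be evaluated directly from the explicit $\varepsilon$-factor formulas \eqref{eq:explicit-value-of-epsilon-factor-trivial} and \eqref{eq:explicit-value-of-epsilon-facot-non-trivial}.

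For the unramified part, I would decompose $\text{\rm Ind}_{W_{K_0}}^{W_F}\text{\bf 1}_{K_0} = \bigoplus_{k=0}^{f-1}\eta^k$, where $\eta$ is the unramified character of $F^\times$ corresponding via $\delta_F$ to a generator of the character group of $\text{\rm Gal}(K_0/F)$. Since every $\eta^k$ is unramified, \eqref{eq:explicit-value-of-epsilon-factor-trivial} applies to each summand and to the denominator $\varepsilon(\text{\bf 1}_{K_0},\psi_{K_0},d_{K_0}(x))$; all $q$-powers cancel, and the only nontrivial contribution is
$\prod_{k=0}^{f-1}\eta^k(\varpi_F)^{d(F)} = \eta(\varpi_F)^{d(F)\cdot f(f-1)/2} = (-1)^{(f-1)d(F)}$
(since $\eta(\varpi_F)$ is a primitive $f$-th root of unity). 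Raising this to the $e$-th power recovers the case of odd $e$ of the proposition and contributes $1$ when $e$ is even.

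For the totally tamely ramified part, tame ramification forces $e\mid q^f-1$, so there exists a character $\chi$ of $K_0^\times$ of order $e$ which is trivial on $\varpi_0$ and which on $O_{K_0}^\times/(1+\frak{p}_{K_0})\cong\Bbb F_{q^f}^\times$ generates the group of characters of order dividing $e$. Hence $\text{\rm Ind}_{W_K}^{W_{K_0}}\text{\bf 1}_K = \bigoplus_{k=0}^{e-1}\chi^k$. Since each nontrivial $\chi^k$ has conductor one and $\chi^k(\varpi_0)=1$, formula \eqref{eq:explicit-value-of-epsilon-facot-non-trivial} combined with the computation of $\varepsilon(\text{\bf 1}_K,\psi_K,d_K(x))$ shows that all $q$-powers cancel, leaving
$$\lambda(K/K_0,\psi_{K_0}) = \prod_{k=1}^{e-1}G_{\psi_{K_0}}\bigl(\chi^{-k},-\varpi_0^{-(d(K_0)+1)}\bigr).$$
The crux is the classical pairing identity
$$G_{\psi_{K_0}}(\chi^k,a)\cdot G_{\psi_{K_0}}(\chi^{-k},a) = \chi^k(-1),$$
which I would prove by a direct expansion of the defining sum and orthogonality of characters of $\Bbb F_{q^f}^\times$. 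For $e$ odd, the nontrivial $\chi^k$ pair up without fixed points and $\chi^k(-1)=\zeta_e^{k(q^f-1)/2}=1$ (since $e$ divides the odd part of $q^f-1$), whence $\lambda(K/K_0,\psi_{K_0})=1$. For $e$ even, the self-paired character $\chi^{e/2}$ is precisely $\left(\frac{\ast}{K_0}\right)$ and supplies the Gauss sum in the statement, while the remaining $(e/2)-1$ pairs contribute powers of $\chi(-1)=(-1)^{(q^f-1)/e}$.

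The main obstacle is the careful bookkeeping of signs in the even case: the stated exponent $\frac{q^f-1}{e}\cdot\frac{e(e+2)}{8}$ must arise from combining $\chi(-1)^{1+2+\cdots+(e/2-1)} = \chi(-1)^{e(e-2)/8}$ (from the pairing) with the character-sign discrepancy $\chi^{-k}(-1)$ coming from the minus in the argument $-\varpi_0^{-(d(K_0)+1)}$ of \eqref{eq:explicit-value-of-epsilon-facot-non-trivial} versus the positive $\varpi_0^{-(d(K_0)+1)}$ in the statement, together with the rescaling between $d_F(x)$ and the self-dual Haar measure. Verifying that these independent sources of $\pm 1$ combine precisely to shift the exponent from $\frac{e(e-2)}{8}$ to $\frac{e(e+2)}{8}$ is the delicate point, though it follows from $\chi(-1)=(-1)^{(q^f-1)/e}$ and elementary arithmetic.
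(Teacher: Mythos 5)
Your proposal is correct and follows exactly the route the paper intends (the paper offers no detailed proof, only the remark that the value follows from the chain relation of Proposition~\ref{prop:chain-relation-of-lambda-factor} applied to $F\subset K_0\subset K$ together with the explicit abelian $\varepsilon$-factor formulas): the unramified step gives $(-1)^{(f-1)d(F)}$, and the totally ramified step reduces to the product of tame Gauss sums, with the pairing identity yielding $\chi(-1)^{e(e-2)/8}$ and the sign $\chi^{e/2}(-1)=(-1)^{(q^f-1)/2}$ from the argument $-\varpi_0^{-(d(K_0)+1)}$ versus $\varpi_0^{-(d(K_0)+1)}$ shifting the exponent to $\frac{q^f-1}{e}\cdot\frac{e(e+2)}{8}$, exactly as you indicate. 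The delicate bookkeeping you flag does close, since $\frac{q^f-1}{e}\cdot\frac{e(e+2)}{8}-\frac{q^f-1}{e}\cdot\frac{e(e-2)}{8}=\frac{q^f-1}{2}$.
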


\subsection{$\gamma$-factors of admissible representations of Weil group}
\label{subsec:admissible-representation-of-weil-group}

\begin{dfn}\label{def:admisible-representation-of-weil-group}
The pair $(\Phi,V)$ is called an admissible representation of $W_F$ if
\begin{enumerate}
\item $V$ is a finite dimensional complex vector space and $\Phi$ is a
  group homomorphism of $W_F$ to $GL_{\Bbb C}(V)$,
\item $\text{\rm Ker}(\Phi)$ is an open subgroup of $W_F$,
\item $\Phi(\widetilde{\text{\rm Fr}})\in GL_{\Bbb C}(V)$ is
  semisimple.
\end{enumerate}
\end{dfn}

Let $(\Phi,V)$ be an admissible representation of $W_F$. Since 
$I_F=\text{\rm Gal}(F^{\text{\rm alg}}/F^{\text{\rm ur}})$ is a normal
subgroup of $W_F$, $\Phi(\widetilde{\text{\rm Fr}})\in GL_{\Bbb C}(V)$
keeps 
$$
 V^{I_F}=\{v\in V\mid\Phi(\sigma)v=v\;\forall\sigma\in I_F\}
$$
stable. Then the $L$-factor of $(\Phi,V)$ is defined by 
$$
 L(\Phi,s)=L(V,s)
 =\det\left(1-q^{-s}\cdot\Phi(\widetilde{\text{\rm Fr}})|_{V^{I_F}}
              \right)^{-1}.
$$
Since $\Phi:W_F\to GL_{\Bbb C}(V)$ is continuous group homomorphism,
we have the $\varepsilon$-factor $\varepsilon(\Phi,\psi,d(x),s)$ of
$\Phi$. Then the $\gamma$-factor of $(\Phi,V)$ is defined by
$$
 \gamma(\Phi,\psi,d(x),s)=\gamma(V,\psi,d(x),s)
 =\varepsilon(\Phi,\psi,d(x),s)\cdot
  \frac{L(\Phi\sphat,1-s)}
       {L(\Phi,s)}
$$
where $\Phi\sphat$ is the dual representation of $\Phi$.

\subsection{Symmetric tensor representation of $SL_2(\Bbb C)$}
\label{subsec:symmetric-tensor-representation-fo-sl(2)}
The complex special linear group $SL_2(\Bbb C)$ acts on the polynomial
ring $\Bbb C[X,Y]$ of two variables $X,Y$ by
$$
 g\cdot \varphi(X,Y)=\varphi((X,Y)g)
 \qquad
 (g\in SL_2(\Bbb C), \varphi(X,Y)\in\Bbb C[X,Y]).
$$
Let
$$
 \mathcal{P}_n
 =\langle X^n,X^{n-1}Y,\cdots,XY^{n-1},Y^n\rangle_{\Bbb C}
$$
be the subspace of $\Bbb C[X,Y]$ consisting of the homogeneous
polynomials of degree $n$. The action of $SL_2(\Bbb C)$ on
$\mathcal{P}_n$ defines the symmetric tensor representation 
$\text{\rm Sym}_n$ of degree $n+1$. The complex vector space
$\mathcal{P}_n$ has a non-degenerate bilinear form defined by
$$
 \langle\varphi,\psi\rangle
 =\left.
  \varphi\left(-\frac{\partial}
                     {\partial Y},\frac{\partial}
                                       {\partial X}\right)
  \psi(X,Y)\right|_{(X,Y)=(0,0)}\in\Bbb C
$$
for $\varphi,\psi\in\mathcal{P}_n$. This bilinear form is 
$SL_2(\Bbb C)$-invariant
$$
 \langle\text{\rm Sym}_n(g)\varphi,\text{\rm Sym}_n(g)\psi\rangle
 =\langle\varphi,\psi\rangle
 \quad
 (g\in SL_2(\Bbb C), \varphi,\psi\in\mathcal{P}_n)
$$
and
$$
 \langle\psi,\varphi\rangle=(-1)^n\langle\varphi,\psi\rangle
 \quad
 (\varphi,\psi\in\mathcal{P}_n).
$$
So we have group homomorphisms
$$
 \text{\rm Sym}_n:SL_2(\Bbb C)\to SO(\mathcal{P}_n)
 \;\;\text{\rm if $n$ is even}
$$
and
$$
 \text{\rm Sym}_n:SL_2(\Bbb C)\to Sp(\mathcal{P}_n)
 \;\;\text{\rm if $n$ is odd}.
$$

\subsection{Admissible representations of Weil-Deligne group}
\label{subsec:weil-deligne-group}
Fix a complex Lie group $\mathcal{G}$ such that the connected
component $\mathcal{G}^o$ is a reductive complex algebraic linear
group. Then the $\mathcal{G}^o$-conjugacy class of the group
homomorphisms 
$$
 \varphi:W_F\times SL_2(\Bbb C)\to\mathcal{G}
$$
such that
\begin{enumerate}
\item $I_F\cap\text{\rm Ker}(\varphi)$ is an open subgroup of $I_F$,
\item $\varphi(\widetilde{\text{\rm Fr}})\in\mathcal{G}$ is semi-simple,
\item $\varphi|_{SL_2(\Bbb C)}:SL_2(\Bbb C)\to\mathcal{G}^o$ is a
  morphism of complex linear algebraic group
\end{enumerate}
corresponds bijectively the equivalence classes of the triples 
$(\rho,\mathcal{G},N)$ where $N\in\text{\rm Lie}(\mathcal{G})$ is a
nilpotent element and 
$$
 \rho:W_F\to\mathcal{G}
$$
is a group homomorphism such that
\begin{enumerate}
\item $\rho|_{I_F}:I_F\to\mathcal{G}$ is continuous,
\item $\rho(\widetilde{\text{\rm Fr}})\in\mathcal{G}$ is semi-simple,
\item $\rho(g)N=|g|_F\cdot N$ for $\forall g\in W_F$ where
$$
 |\cdot|_F:W_F\xrightarrow{\text{\rm can.}}
           W_F/\overline{[W_F,W_F]}\xrightarrow{\text{\rm l.c.f.t.}}
           F^{\times}\xrightarrow{q^{-\text{\rm ord}_F(\cdot)}}
           \Bbb Q^{\times}
$$
\end{enumerate}
by the relations
$$
 \rho|_{I_F}=\varphi|_{I_F},
 \quad
 \rho(\widetilde{\text{\rm Fr}})
 =\varphi(\widetilde{\text{\rm Fr}})\cdot
  \varphi\begin{pmatrix}
          q^{-1/2}&0\\
          0&q^{1/2}
         \end{pmatrix},
 \quad
 N=d\varphi\begin{pmatrix}
            0&1\\
            0&0
           \end{pmatrix}
$$
(see \cite[Prop.2.2]{Gross-Reeder2010}). Here two triples 
$(\rho,\mathcal{G},N)$ and $(\rho^{\prime},\mathcal{G},N^{\prime})$ is
 equivalent if there exists a $g\in\mathcal{G}$ such that 
$\rho^{\prime}=g\rho g^{-1}$ and $N^{\prime}=\text{\rm Ad}(g)N$. 

The couple
 $(\varphi,\mathcal{G})$ or the triple $(\rho,\mathcal{G},N)$ is
 called an admissible representation of the Weil-Deligne group. 

Let $(r.V)$ be a continuous finite dimensional complex representation
of $\mathcal{G}$ which is algebraic on $\mathcal{G}^o$. Then the
$L$-factor associated with $(\varphi,\mathcal{G})$ and $(r,V)$ 
is defined by
$$
 L(\varphi,r,s)
 =\det\left(
   1-q^{-s}r\circ\rho(\widetilde{\text{\rm Fr}})|_{V_N^{I_F}}
             \right)^{-1},
$$
where $V_N=\{v\in V\mid dr(N)v=0\}$ and 
$$
 V_N^{I_F}
 =\{v\in V_N\mid r\circ\rho(\sigma)v=v\;\forall\sigma\in I_F\}.
$$
The $\varepsilon$-actor is defined by
$$
 \varepsilon(\varphi,r,\psi,d(x),s)
 =\varepsilon(r\circ\rho,\psi,d(x),s)\cdot
  \det\left(-q^{-s}r\circ\rho(\widetilde{\text{\r Fr}})
             |_{V^{I_F}/V_N^{I_F}}\right)
$$
where $\varepsilon(r\circ\rho,\psi,d(x),s)$ is the $\varepsilon$-factor
of the representation $(r\circ\rho,V)$ of $W_F$ defined in the
subsection \ref{subsec:admissible-representation-of-weil-group}. 
Finally the $\gamma$-factor is defined by 
$$
 \gamma(\varphi,r,\psi,d(x),s)
 =\varepsilon(\varphi,r,\psi,d(x),s)\cdot
   \frac{L(\varphi,r^{\vee},1-s)}
        {L(\varphi,r,s)}
$$
where $r^{\vee}$ is the dual representation of $r$. 

Let $\text{\rm Sym}_n$ be the symmetric tensor representation of
$SL_2(\Bbb C)$ of degree $n+1$. Then the $W_F\times SL_2(\Bbb
C)$-module $V$ has a decomposition
$$
 V=\bigoplus_{n=0}^{\infty}V_n\otimes\text{\rm Sym}_n
$$
where $V_n$ is a $W_F$-module. Then we have
$$
 V_N^{I_F}
 =\bigoplus_{n=0}^{\infty}V_n^{I_F}\otimes\text{\rm Sym}_{n,N}
$$
where $\text{\rm Sym}_{n,N}$ is the highest part of 
$\text{\rm Sym}_n$. 
Since $r\circ\rho(\widetilde{\text{\rm Fr}})$ act on 
$V_n\otimes\text{\rm Sym}_{n,N}$ by 
$q^{-n/2}r\circ\varphi(\widetilde{\text{\rm Fr}})$, we have
$$
 L(\varphi,r,s)
 =\prod_{n=0}^{\infty}\det\left(
   1-q^{-(s+n/2)}r\circ\varphi(\widetilde{\text{\rm Fr}})|_{V_n^{I_F}}
                                 \right)^{-1}.
$$
If the Haar measure $d(x)$ on the additive group $F$ and the additive
character $\psi:F\to\Bbb C^{\times}$ are normalized so
that $\int_{O_F}d(x)=1$ and
$$
 \{x\in F\mid\psi(xO_F)=1\}=O_F,
$$
then we have
$$
 \varepsilon(\varphi,r,\psi,d(x),s)
 =w(\varphi,r)\cdot q^{a(\varphi,r)(1/2-s)}
$$
where
$$
 w(\varphi,r)=\prod_{n=0}^{\infty}w(V_n)^{n+1}\cdot
              \prod_{n=1}^{\infty}\det\left(
               -\varphi(\widetilde{\text{\rm Fr}})|_{V_n^{I_F}}
                                              \right)^n
$$
and
$$
 a(\varphi,r)=\sum_{n=0}^{\infty}(n+1)a(V_n)
             +\sum_{n=1}^{\infty}n\cdot\dim V_n^{I_F}.
$$
If $\varphi|_{SL_2(\Bbb C)}=1$, then $V_n=0$ for all $n>0$ and we have
$$
 w(\varphi,r)=w(r\circ\varphi)=w(r\circ\rho),
 \quad
 a(\varphi,r)=a(r\circ\varphi)=a(r\circ\rho).
$$

Sendai 980-0845, Japan\\
Miyagi University of Education\\
Department of Mathematics
\end{document}